\documentclass[a4paper, 12pt, twoside]{article}
\usepackage{amsmath, amsthm, amsfonts, amssymb, graphicx, color}
\usepackage{esint,comment}
\setlength{\topmargin}{-5mm}
\setlength{\oddsidemargin}{7mm}
\setlength{\evensidemargin}{7mm}
\textwidth=33cc
\textheight=50cc

\numberwithin{equation}{section}
     \newtheorem{thm}{Theorem}[section]
     \newtheorem{cor}[thm]{Corollary}
     \newtheorem{prop}[thm]{Proposition}
     \newtheorem{lem}[thm]{Lemma}
\theoremstyle{definition}
      \newtheorem{defn}{Definition}[section]
     
\theoremstyle{remark}
     \newtheorem{rem}{Remark}[section]

\newcommand{\N}{\mathbb{N}}
\newcommand{\R}{\mathbb{R}}
\newcommand{\Q}{\mathbb{Q}}
\newcommand{\T}{\mathbb{T}}
\newcommand{\Z}{\mathbb{Z}}

\newcommand{\cB}{\mathcal{B}}

\newcommand{\cL}{\mathcal{L}}

\newcommand{\cP}{\mathcal{P}}

\newcommand{\cS}{\mathcal{S}}

\newcommand{\CMO}{\mathrm{CMO}}

\newcommand{\M}{\mathrm{M}}

\newcommand{\supp}{\operatorname{supp}}

\newcommand{\loc}{\mathrm{loc}}
\newcommand{\comp}{\mathrm{comp}}

\newcommand{\ls}{\lesssim}
\newcommand{\gs}{\gtrsim}

\newcommand{\cGdec}{\mathcal{G}^{\rm dec}}

\newcommand{\Ci}{C^{\infty}}

\newcommand{\Cic}{C^{\infty}_{\comp}}

\newcommand{\vp}{\varphi}

\newcommand{\dlim}{\displaystyle\lim}
\newcommand{\dlimsup}{\displaystyle\limsup}

\newcommand{\dsup}{\displaystyle\sup}








\begin{document}

\baselineskip=18pt

\title{The distance in Morrey spaces to $C^{\infty}_{\mathrm{comp}}$}

\author{Satoshi Yamaguchi}

\date{}

\maketitle

\begin{center}
\small\it
Department of Mathematics, Ibaraki University, Mito, Ibaraki 310-8512, Japan \\[1ex]
Email:
23nd203a@vc.ibaraki.ac.jp;
satoshiyamaguchi.1998@gmail.com
\end{center}

\begin{abstract}
In this paper we characterize the distance between the function $f$
and the set $\Cic(\R^d)$ 
in generalized Morrey spaces $L_{p,\phi}(\R^d)$ with variable growth condition. 
We also prove that the bi-dual of $\overline{\Cic(\R^d)}^{L_{p,\phi}(\R^d)}$ 
is $L_{p,\phi}(\R^d)$.  
As an application of the characterization of the distance
we show the boundedness of Calder\'{o}n-Zygmund operators on $\overline{\Cic(\R^d)}^{L_{p,\phi}(\R^d)}$.
By the duality we also see that 
these operators are bounded on its dual and bi-dual spaces. 
\footnote[0]
{2020 $\mathit{Mathematics\ Subject\ Classification}.$
\ 42B35, 46E30, 46B10, 42B20 

\ \ $\mathit{Key\ words\ and\ phrases.}$
\ Morrey space, block space, variable growth condition, predual, singular integral.}
\end{abstract}

\section{Introduction}\label{s:intro}
In Euclidean space $\R^d$ 
let $B=B(x,r)$ denote the open ball centered at $x$ and of radius $r$.
For a function $f\in L^1_{\loc}(\R^d)$ and a ball $B$, let
\begin{equation}
 \M_p(f,B)=\left(\frac{1}{|B|}\int_B|f(y)|^p\,dy\right)^{1/p},
 \quad
 p\in[1,\infty),
\end{equation}
where $|B|$ is the Lebesgue mesure of $B$.
Then the Morrey space $L_{p,\lambda}(\R^d)$ is defined to be the set of all functions $f\in L^1_{\loc}(\R^d)$
such that
\begin{equation}
 \|f\|_{L_{p,\lambda}}=\sup_{B=B(x,r)}\frac{\M_p(f,B)}{r^{\lambda}}<\infty,
 \quad
 p\in[1,\infty), \ \lambda\in[-d/p,0], 
\end{equation}
where the supremum is taken over all balls $B$ in $\R^d$. 
If $\lambda=-d/p$, then $L_{p,\lambda}(\R^d)=L^p(\R^d)$.
If $\lambda=0$, then $L_{p,\lambda}(\R^d)=L^{\infty}(\R^d)$ by the Lebesgue differentiation theorem.

It is known that $\Cic(\R^d)$,
the set of all infinitely differentiable functions with compact support, 
is not dense in $L_{p,\lambda}(\R^d)$ 
for $p\in[1,\infty)$ and $\lambda\in(-d/p,0)$,
while it is dense in $L^p(\R^d)$. 
For example, let 
\begin{equation*}
 f_{x_0}(x)=|x-x_0|^{\lambda/p}\chi_{B(x_0,r_0)},
\end{equation*}
where $\chi_E$ is the characteristic function of the mesurable set $E$ in $\R^d$.
Then $f_{x_0}$ is in $L_{p,\lambda}(\R^d)$, but 
\begin{equation*}
 \|f_{x_0}-h\|_{L_{p,\lambda}}\ge2^{-p-1}|S^{d-1}|
\end{equation*}
for any continuous function $h$, where $S^{d-1}=\{x\in\R^d:|x|=1\}$.
This example was given by Zorko~\cite{Zorko1986}.

In this paper 
we consider generalized Morrey space $L_{p,\phi}(\R^d)$ for 
$p\in[1,\infty)$ and $\phi:\R^d\times(0,\infty)\to(0,\infty)$.
For a ball $B=B(x,r)$ we write $\phi(B)=\phi(x,r)$.
Then $L_{p,\phi}(\R^d)$ is defined to be the set of all functions $f$ such that
\begin{equation*}
 \|f\|_{L_{p,\phi}}=\sup_{B}\frac{\M_p(f,B)}{\phi(B)}<\infty,
\end{equation*}
where
the supremum is taken over all balls $B$ in $\R^d$.
The purpose of this paper is to characterize the distance 
between the function $f$ and the set $\Cic(\R^d)$ in $L_{p,\phi}(\R^d)$,
which is defined by
\begin{equation*}
 d(f,\Cic(\R^d))=\inf_{g\in\Cic(\R^d)}\|f-g\|_{L_{p,\phi}}.
\end{equation*}

As a corollary we get the characterization 
on $\overline{\Cic(\R^d)}^{L_{p,\phi}(\R^d)}$
like the Uchiyama's characterization on $\CMO(\R^d)$ 
in \cite{Uchiyama1978}, 
where $\overline{\Cic(\R^d)}^{L_{p,\phi}(\R^d)}$ 
is the closure of $\Cic(\R^d)$ with respect to $L_{p,\phi}(\R^d)$.
For the case $\phi:(0,\infty)\to(0,\infty)$,
the space $\overline{\Cic(\R^d)}^{L_{p,\phi}(\R^d)}$
was introduced by Zorko~\cite{Zorko1986}
as a subspace of $L_{p,\phi}(\R^d)$ with nice properties.

We also prove that the bi-dual space of 
$\overline{\Cic(\R^d)}^{L_{p,\phi}(\R^d)}$ is $L_{p,\phi}(\R^d)$. 
It is known that
the block space $B^{[\phi,p']}(\R^d)$ is one of predual spaces of $L_{p,\phi}(\R^d)$
by Nakai~\cite{Nakai2008AMS}, where $1/p+1/p'=1$.
The block space (space generated by blocks) was introduced 
by Lu, et al.~\cite{Lu-Taibleson-Weiss1982} and Taibleson and Weiss~\cite{Taibleson-Weiss1983}.
Nakai's result is a generalization of the result by Long~\cite{Long1984}. 
We show that the dual space of $\overline{\Cic(\R^d)}^{L_{p,\phi}(\R^d)}$ is $B^{[\phi,p']}(\R^d)$.


As an application of the characterization of the distance $d(f,\Cic(\R^d))$ 
we show the boundedness of Calder\'{o}n-Zygmund operators on $\overline{\Cic(\R^d)}^{L_{p,\phi}(\R^d)}$. 
This result is an extension of the result in 
\cite{Rosenthal-Triebel2014}. 
Moreover, the proof method differs from that in 
\cite{Rosenthal-Triebel2014}. 
Through the dual and bi-dual operators we also see that 
these operators are bounded on $B^{[\phi,p']}(\R^d)$ and on $L_{p,\phi}(\R^d)$. 

In this pape we consider the following conditions on $\phi$.
We say that a function $\phi:\R^d\times(0,\infty)\to(0,\infty)$ 
satisfies the doubling condition (resp. nearness condition) if
there exists a positive constant $C$ such that,
for all $x,y\in\R^d$ and $r,s\in(0,\infty)$,
\begin{align}\label{DC}\tag{DC}
 \frac1C\le\frac{\phi(x,r)}{\phi(x,s)}&\le C,
 \quad\text{if} \ \ \frac12\le\frac{r}{s}\le2  \\
 \bigg(\text{resp.}\ 
 \frac1C\le\frac{\phi(x,r)}{\phi(y,r)}&\le C,
 \quad\text{if} \ \ |x-y|\le r\bigg).                    \label{NC}\tag{NC}
\end{align}
We say that $\phi$ is almost increasing (resp. almost decreasing) if
there exists a positive constant $C$ such that, for all $x\in\R^d$ and $r,s\in(0,\infty)$,
\begin{align}
 &\phi(x,r)\le C\phi(x,s),\quad\text{if $r<s$}  \label{AI}\tag{AI}
\\
 (\text{resp.}\ &C\phi(x,r)\ge \phi(x,s), \quad\text{if $r<s$}). \label{AD}\tag{AD}
\end{align}
For two functions $\phi,\psi:\R^d\times(0,\infty)\to(0,\infty)$,
we write $\phi\sim\psi$ if
there exists a positive constant $C$ such that,
for all $x\in\R^d$ and $r\in(0,\infty)$,
\begin{equation}\label{rho kappa}
 \frac1C\le\frac{\phi(x,r)}{\psi(x,r)}\le C.
\end{equation}
We also define 
$\cGdec_p$ as the set of all functions $\phi:\R^d\times(0,\infty)\to(0,\infty)$
such that 
$r\mapsto r^{d/p}\phi(x,r)$ is almost increasing 
and that
$r\mapsto\phi(x,r)$ is almost decreasing.
That is,
there exists a positive constant $C$ such that, 
for all $x\in\R^d$ and $r,s\in(0,\infty)$,
\begin{equation*}
 r^{d/p}\phi(x,r)\le Cs^{d/p}\phi(x,s),
 \quad
 C\phi(x,r)\ge \phi(x,s), 
 \quad
 \text{if} \ r<s.
\end{equation*}

It is known that, if $\phi$ is in $\cGdec_p$ and satisfies \eqref{NC},
then $\chi_B\in L_{p,\phi}(\R^d)$ for each ball $B$, see \cite{Tang-Nakai-Yang-Zhu-preparation}.
Consequently, $\Cic(\R^d)\subset L_{p,\phi}(\R^d)$.
We give its proof in Section~\ref{s:duality} for convenience.

For example, let
\begin{equation}\label{phi lambda}
  \phi(x,r)=
  \begin{cases}
   r^{\lambda(x)}, & 0<r<1, \\
   r^{\lambda_*}, & 1\le r<\infty,
  \end{cases}
\end{equation}
where $\lambda(\cdot):\R^d\to(-\infty,\infty)$ and $\lambda_*\in(-\infty,\infty)$.
Let 
\begin{equation*}
 \lambda_+=\sup_{x\in\R^d}\lambda(x), \quad \lambda_-=\inf_{x\in\R^d}\lambda(x).
\end{equation*}
If $-d/p\le\lambda_-\le\lambda_+\le0$ and $-d/p\le\lambda_*\le0$,
then
$\phi$ is in $\cGdec_p$.
Moreover, if $\lambda(\cdot)$ is log-H\"older continuous,
that is,
there exists a positive constant $C$ such that, for all $x,y\in\R^d$,
\begin{equation*} 
  |\lambda(x)-\lambda(y)|\le \frac{C}{\log(e/|x-y|)}
  \quad\text{if}\quad
  0<|x-y|<1,
\end{equation*}
then $\phi$ satisfies \eqref{NC}, see \cite[Proposition~3.3]{Nakai2010RMC}.

To state the main result we define $A_{p,\phi}$ for $p\in[1,\infty)$ and $\phi:\R^d\times(0,\infty)\to(0,\infty)$
as follows:
\begin{align*}
 A_{p,\phi}(f)
 &=\max\Bigg\{
  \dlimsup_{r\to+0}\sup_{x\in\R^d}\frac{\M_p(f,B(x,r))}{\phi(x,r)}, \\
 &\quad
 \dlimsup_{r\to\infty}\sup_{x\in\R^d}\frac{\M_p(f,B(x,r))}{\phi(x,r)}, \quad
  \dsup_{r>0}\dlimsup_{|x|\to\infty}\frac{\M_p(f,B(x,r))}{\phi(x,r)}
  \Bigg\}. 
\end{align*}
Then the main result is the following:

\begin{thm}\label{thm:distance}
Let $p\in[1,\infty)$, 
and let $\phi$ be in $\cGdec_p$ and satisfy \eqref{NC}.
Then there exists a positive constant $C$ such that,
for all $f\in L_{p,\phi}(\R^d)$,
\begin{equation}\label{d le A}
 d(f,\Cic(\R^d))\le CA_{p,\phi}(f),
\end{equation}
Moreover, if 
\begin{align}
 \lim_{r\to+0}\inf_{x\in B(0,M)}\phi(x,r)&=\infty
 \quad 
 \text{for each $M>0$},                                         \label{lim1} \\
 \lim_{r\to\infty}r^{d/p}\phi(0,r)&=\infty,       \label{lim2}
\end{align}
then, 
for all $f\in L_{p,\phi}(\R^d)$,
\begin{equation}
 A_{p,\phi}(f)\le d(f,\Cic(\R^d))\le CA_{p,\phi}(f).
\end{equation}
\end{thm}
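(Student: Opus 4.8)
The plan is to prove the two inequalities separately. For the upper bound $d(f,\Cic(\R^d))\le CA_{p,\phi}(f)$, I would construct an explicit approximating sequence. Fix $f\in L_{p,\phi}(\R^d)$ and $\ve>0$. The definition of $A_{p,\phi}(f)$ says that outside a compact set and away from the scales $r\to 0$ and $r\to\infty$, the local averages $\M_p(f,B(x,r))/\phi(x,r)$ are controlled by $A_{p,\phi}(f)+\ve$. Concretely, there are $0<r_0<r_1<\infty$ and $R>0$ so that $\M_p(f,B(x,r))\le(A_{p,\phi}(f)+\ve)\phi(x,r)$ whenever $r<r_0$, or $r>r_1$, or $|x|>R$. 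I would then truncate and mollify: set $g=(\eta_R\cdot f)*\rho_\delta$ where $\eta_R$ is a smooth cutoff supported in a large ball and $\rho_\delta$ is a standard mollifier with $\delta$ small, chosen after $R$. The difference $f-g$ splits as $(1-\eta_R)f$ (supported where $|x|$ is large, hence small in norm on balls, using \eqref{NC} and membership in $\cGdec_p$ to pass from pointwise smallness of averages to a norm bound) plus $\eta_R f-(\eta_R f)*\rho_\delta$, which is small at tiny scales by continuity of translation in $L^p_{\loc}$ and at large scales because $\eta_R f$ is compactly supported while $\phi$ is almost decreasing. Assembling these estimates, using the doubling condition \eqref{DC} to handle intermediate scales, yields $\|f-g\|_{L_{p,\phi}}\le C(A_{p,\phi}(f)+\ve)$; letting $\ve\to 0$ gives \eqref{d le A}.

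For the lower bound $A_{p,\phi}(f)\le d(f,\Cic(\R^d))$ under the extra hypotheses \eqref{lim1}--\eqref{lim2}, I would show that each of the three quantities inside the maximum defining $A_{p,\phi}(f)$ is $\le\|f-g\|_{L_{p,\phi}}$ for every $g\in\Cic(\R^d)$, and then take the infimum over $g$. The point is that $A_{p,\phi}$ annihilates $\Cic$: if $g\in\Cic(\R^d)$ then $\M_p(g,B(x,r))\to 0$ suitably fast relative to $\phi(x,r)$ in each of the three regimes. For the first regime ($r\to 0$), since $g$ is bounded, $\M_p(g,B(x,r))\le\|g\|_{L^\infty}$, and \eqref{lim1} forces $\M_p(g,B(x,r))/\phi(x,r)\to 0$ uniformly for $x$ in the (fixed) support of $g$; outside the support it is $0$. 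For the third regime ($|x|\to\infty$ with $r$ fixed), for $|x|$ large enough $B(x,r)$ misses $\supp g$, so the ratio is $0$. For the second regime ($r\to\infty$), $\M_p(g,B(x,r))\le\|g\|_{L^p}|B(x,r)|^{-1/p}\sim\|g\|_{L^p}r^{-d/p}$, and \eqref{lim2} together with \eqref{NC} (to replace $\phi(x,r)$ by $\phi(0,r)$ up to a constant, valid once $r\ge|x|$, and then a covering/doubling argument for general $x$) gives that $r^{d/p}\phi(x,r)\to\infty$, so the ratio tends to $0$. Hence $A_{p,\phi}(g)=0$, and by the triangle-type subadditivity $A_{p,\phi}(f)\le A_{p,\phi}(f-g)+A_{p,\phi}(g)=A_{p,\phi}(f-g)\le\|f-g\|_{L_{p,\phi}}$, the last step because each defining quantity is a limsup of quantities each bounded by the full supremum $\|f-g\|_{L_{p,\phi}}$.

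The subadditivity $A_{p,\phi}(f)\le A_{p,\phi}(f-g)+A_{p,\phi}(g)$ needs $p\ge 1$ (Minkowski for $\M_p$) and is routine; I would record it as a preliminary lemma together with the observation $A_{p,\phi}(f)\le\|f\|_{L_{p,\phi}}$.

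The main obstacle I anticipate is the upper bound, specifically converting the three pointwise-average smallness conditions encoded in $A_{p,\phi}(f)$ into a genuine $L_{p,\phi}$-norm estimate on $f-g$ that is uniform over all balls simultaneously. One must carefully interleave the choice of the cutoff radius $R$, the mollification parameter $\delta$, and the threshold scales $r_0,r_1$ — a ball $B(x,r)$ of intermediate scale centered near the edge of the cutoff region sees contributions from both the truncation error and the mollification error, and one needs \eqref{DC}, \eqref{NC}, and the $\cGdec_p$ structure (almost monotonicity of $\phi$ and of $r^{d/p}\phi$) to bound $\phi$ on such a ball from below by a constant times the relevant reference values. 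Handling this bookkeeping so that the final constant $C$ depends only on $p,\phi,d$ is where the real work lies; the rest is standard mollification theory.
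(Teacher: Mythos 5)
Your lower-bound argument is correct and matches the paper's: show $A_{p,\phi}(g)=0$ for $g\in\Cic(\R^d)$ using \eqref{lim1}, \eqref{lim2}, and compact support, then use subadditivity of $A_{p,\phi}$ together with $A_{p,\phi}(h)\le\|h\|_{L_{p,\phi}}$. Your upper-bound strategy (truncate, then mollify) also matches the paper's in outline, and the sharp-vs.-smooth cutoff difference is cosmetic. However, the \emph{mechanisms} you cite for controlling the mollification error $\eta_R f-(\eta_R f)*\rho_\delta$ at the extreme scales are not the right ones, and this is a genuine gap, not just bookkeeping.

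At tiny scales: $L^p_{\loc}$-continuity of translation gives $\int_K|h-h*\rho_\delta|^p\to 0$ over a fixed compact $K$, but the quantity you need to bound is $\M_p(h-h*\rho_\delta,B(x,r))/\phi(x,r)=\|h-h*\rho_\delta\|_{L^p(B(x,r))}/\bigl(|B(x,r)|^{1/p}\phi(x,r)\bigr)$, and $|B(x,r)|^{1/p}\phi(x,r)\sim r^{d/p}\phi(x,r)$ is \emph{almost increasing} in $r$, so it can tend to $0$ as $r\to 0$ (e.g.\ $\phi=r^\lambda$, $-d/p<\lambda<0$). Thus $L^p$-smallness of the error does not control the ratio on tiny balls. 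The paper instead proves (Lemma~\ref{lem:(ef)B r<t}) that for $r\le\delta$ one has $\M_p(\rho_\delta*h,B(x,r))/\phi(x,r)\lesssim\M_p(h,B(x,2\delta))/\phi(x,2\delta)$, so smallness at tiny scales comes from the $A_{p,\phi}$-smallness of $f$ itself on balls in $\cB_1$, applied both to $f$ and to its mollification, not from $L^p$-convergence.

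At large scales: your stated reason, that $\eta_R f$ is compactly supported while $\phi$ is almost decreasing, works against you, since for large $r$ the denominator $\phi(x,r)$ is \emph{small}; you would instead need $r^{d/p}\phi(x,r)\to\infty$, which is \eqref{lim2} and not assumed for \eqref{d le A}. The paper handles large balls (and far-out balls) with Lemma~\ref{lem:(ef)B t<r}: $\M_p(f_1-\eta_t*f_1,B)\le 2\cdot 2^{d/p}\M_p(f_1,2B)$, and since $2B\in\cB_2\cup\cB_3$ the right side is controlled by $A_{p,\phi}(f)$, requiring neither \eqref{lim1} nor \eqref{lim2}. Finally, you have not supplied the key device that makes $L^p$-convergence of the mollification usable at intermediate scales: the paper shows that
\begin{equation*}
C_\epsilon=\inf\bigl\{|B(x,r)|^{1/p}\phi(x,r):2^{i_\epsilon-1}\le r\le 2^{k_\epsilon-1},\ B(x,r)\subset B(0,s)\bigr\}>0
\end{equation*}
using the $\cGdec_p$ structure and \eqref{DC}, \eqref{NC}, and then chooses the mollification parameter so that $\|f_1-\eta_t*f_1\|_{L^p(B(0,s))}<C_\epsilon\epsilon$. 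Without this lower bound and the two mollifier lemmas, the ``standard mollification theory'' does not assemble into the required $L_{p,\phi}$-estimate.
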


\begin{rem}\label{rem:distance}
The inequality \eqref{d le A} is useful to prove 
the boundedness of Calder\'{o}n--Zygmund operators in Section~\ref{s:CZO}. 
\end{rem}

\begin{cor}\label{cor:closure}
Let $p\in[1,\infty)$, and let $\phi$ be in $\cGdec_p$ and satisfy \eqref{NC}, 
\eqref{lim1} and \eqref{lim2}. 
Let $f\in L_{p,\phi}(\R^d)$.
Then $f\in\overline{\Cic(\R^d)}^{L_{p,\phi}(\R^d)}$
if and only if $f$ satisfies the following three conditions:
\begin{enumerate}
\item \label{rto0}
$\dlim_{r\to+0}\sup_{x\in\R^d}\frac{\M_p(f,B(x,r))}{\phi(x,r)}=0$,
\item \label{rtoinfty}
$\dlim_{r\to\infty}\sup_{x\in\R^d}\frac{\M_p(f,B(x,r))}{\phi(x,r)}=0$,
\item \label{xtoinfty}
$\dsup_{r>0}\dlim_{|x|\to\infty}\frac{\M_p(f,B(x,r))}{\phi(x,r)}=0$.
\end{enumerate}
\end{cor}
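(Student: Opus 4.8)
The plan is to derive the corollary directly from Theorem~\ref{thm:distance}. First I would observe that, in a normed space, a point lies in the closure of a subset if and only if its distance to that subset is zero; hence $f\in\overline{\Cic(\R^d)}^{L_{p,\phi}(\R^d)}$ if and only if $d(f,\Cic(\R^d))=0$. Under the hypotheses of the corollary — which include \eqref{lim1} and \eqref{lim2} — Theorem~\ref{thm:distance} supplies the two-sided estimate
\[
 A_{p,\phi}(f)\le d(f,\Cic(\R^d))\le CA_{p,\phi}(f),
\]
so that $d(f,\Cic(\R^d))=0$ if and only if $A_{p,\phi}(f)=0$. Thus everything reduces to showing that $A_{p,\phi}(f)=0$ is equivalent to the conjunction of \ref{rto0}, \ref{rtoinfty} and \ref{xtoinfty}.

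The second step is the elementary unpacking of $A_{p,\phi}(f)=0$. Since $A_{p,\phi}(f)$ is the maximum of three nonnegative quantities, it vanishes if and only if each of the three vanishes. For the first two, the $\limsup$ as $r\to+0$ (resp.\ $r\to\infty$) of the nonnegative function $r\mapsto\sup_{x\in\R^d}\M_p(f,B(x,r))/\phi(x,r)$ equals $0$ exactly when the corresponding limit exists and equals $0$, i.e.\ exactly conditions \ref{rto0} and \ref{rtoinfty}. For the third, $\sup_{r>0}\dlimsup_{|x|\to\infty}\M_p(f,B(x,r))/\phi(x,r)=0$ means that for every fixed $r>0$ the nonnegative function $x\mapsto\M_p(f,B(x,r))/\phi(x,r)$ has $\limsup$ zero as $|x|\to\infty$, which is the same as $\dlim_{|x|\to\infty}\M_p(f,B(x,r))/\phi(x,r)=0$ for each $r>0$, i.e.\ condition \ref{xtoinfty}. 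Combining these three equivalences with the reduction from the first step gives the corollary.

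Since the whole argument is formal once Theorem~\ref{thm:distance} is available, there is no genuine obstacle; the only points requiring care are that the extra assumptions \eqref{lim1}--\eqref{lim2} are exactly what is needed to retain the lower bound $A_{p,\phi}(f)\le d(f,\Cic(\R^d))$, and that one invokes the simple fact that for a nonnegative function the vanishing of the $\limsup$ forces the vanishing of the limit.
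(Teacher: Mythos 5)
Your proof is correct and is exactly the intended argument: the corollary is an immediate consequence of Theorem~\ref{thm:distance}, obtained by combining the characterization of closure points via vanishing distance with the two-sided estimate $A_{p,\phi}(f)\le d(f,\Cic(\R^d))\le CA_{p,\phi}(f)$ and the elementary observation that a nonnegative $\limsup$ vanishes if and only if the limit exists and equals zero. The paper gives no separate proof for this corollary because it follows in precisely this way, and your remark on the role of \eqref{lim1}--\eqref{lim2} matches Remark~\ref{rem:closure}.
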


\begin{rem}\label{rem:closure}
We do not need \eqref{lim1} and \eqref{lim2} to prove that,
if $f$ satisfies (i)--(iii), then $f\in\overline{\Cic(\R^d)}^{L_{p,\phi}(\R^d)}$.
Conversely, if $\phi(x,r)\equiv1$ which does not satisfy \eqref{lim1},
then $L_{p,\phi}(\R^d)=L^\infty(\R^d)$ and 
(i) of Corollary~\ref{cor:closure} fails for $f\in\Cic(\R)$ 
such that $f(x)=1$ near the origin, 
since $\big(\fint_{-r}^{r}|f|^p\big)^{1/p}=1$ as $r\to+0$.
Also, if $\phi(x,r)=r^{-d/p}$ which does not satisfy \eqref{lim2},
then $L_{p,\phi}(\R^d)=L^p(\R^d)$  and 
(ii) of Corollary~\ref{cor:closure} fails for 
$f\ne0$ in $L^p(\R)=\overline{\Cic(\R)}^{L^p(\R)}$, 
since $\big(\int_{-r}^{r}|f|^p\big)^{1/p}\to\|f\|_{L^p}$ as $r\to\infty$.
\end{rem}

\begin{rem}\label{rem:torus}
On the torus $\T^d$ instead of $\R^d$, 
from the proof of Corollary~\ref{cor:closure} 
we see that 
\begin{equation*}
 f\in\overline{\Ci(\T^d)}^{L_{p,\phi}(\T^d)}
 \quad\text{if and only if}\quad 
 \lim_{r\to+0}\sup_{x\in\T^d}\frac{\M_p(f,B(x,r))}{\phi(x,r)}=0. 
\end{equation*}
This result is an extension of 
\cite[Theorem~1.1 (1), (4)]{Izumi-Sato-Yabuta2014}. 
\end{rem}

\begin{rem}\label{rem:campanato}
If we assume the condition \eqref{CZ-C} below, 
then $L_{p,\phi}(\R^d)$ is equivalent to 
the Campanato space $\cL_{p,\phi}(\R^d)$ modulo constant functions, 
and then Corollary~\ref{cor:closure} follows from 
Theorem~3.2 in \cite{Yamaguchi-Nakai-Shimomura2023AMS} 
as a corollary. 
However we do not assume \eqref{CZ-C} in the main result. 
\end{rem}

The following is an immediate consequence of Corollary~\ref{cor:closure}. 

\begin{cor}\label{cor:lambda}
Let $p\in[1,\infty)$. 
Let $\phi:\R^d\times(0,\infty)\to(0,\infty)$ be defined by \eqref{phi lambda}.
Assume that 
$-d/p\le\lambda_-\le\lambda_+<0$, $-d/p<\lambda_*\le0$
and that $\lambda(\cdot)$ is log-H\"older continuous.
Let $f\in L_{p,\phi}(\R^d)$.
Then $f\in\overline{\Cic(\R^d)}^{L_{p,\phi}(\R^d)}$
if and only if $f$ satisfies the following three conditions:
\begin{enumerate}
\item 
$\dlim_{r\to+0}\sup_{x\in\R^d}\frac{\M_p(f,B(x,r))}{r^{\lambda(x)}}=0$.
\item 
$\dlim_{r\to\infty}\sup_{x\in\R^d}\frac{\M_p(f,B(x,r))}{r^{\lambda_*}}=0$.
\item 
$\dlim_{|x|\to\infty}\M_p(f,B(x,r))=0$ \ for each $r>0$.
\end{enumerate}
\end{cor}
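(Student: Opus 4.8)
The plan is to obtain Corollary~\ref{cor:lambda} directly from Corollary~\ref{cor:closure}: one checks that the function $\phi$ of \eqref{phi lambda} satisfies every hypothesis of Corollary~\ref{cor:closure} under the stated assumptions on $\lambda(\cdot)$ and $\lambda_*$, and then one rewrites conditions \ref{rto0}--\ref{xtoinfty} of Corollary~\ref{cor:closure} in the concrete form dictated by \eqref{phi lambda}.

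First I would verify the structural hypotheses. Since $-d/p\le\lambda_-\le\lambda_+<0$ and $-d/p<\lambda_*\le0$, in particular $-d/p\le\lambda_-\le\lambda_+\le0$ and $-d/p\le\lambda_*\le0$, so $\phi\in\cGdec_p$ by the discussion following \eqref{phi lambda}; and log-H\"older continuity of $\lambda(\cdot)$ gives \eqref{NC} by \cite[Proposition~3.3]{Nakai2010RMC}. For \eqref{lim1}, if $0<r<1$ then for every $x$ one has $\phi(x,r)=r^{\lambda(x)}\ge r^{\lambda_+}$, since $t\mapsto r^{t}$ is decreasing and $\lambda(x)\le\lambda_+$; because $\lambda_+<0$, $r^{\lambda_+}\to\infty$ as $r\to+0$, whence $\inf_{x\in B(0,M)}\phi(x,r)\to\infty$ for each $M>0$. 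For \eqref{lim2}, if $r\ge1$ then $r^{d/p}\phi(0,r)=r^{d/p+\lambda_*}$ with $d/p+\lambda_*>0$, so this tends to $\infty$ as $r\to\infty$.

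Next I would translate the three limit conditions. For $r$ near $0$ one has $\phi(x,r)=r^{\lambda(x)}$, so condition~\ref{rto0} of Corollary~\ref{cor:closure} is exactly condition (i) here; for $r$ large one has $\phi(x,r)=r^{\lambda_*}$, so condition~\ref{rtoinfty} is exactly condition (ii) here. For condition~\ref{xtoinfty}, fix $r>0$: if $r\ge1$ then $\phi(x,r)=r^{\lambda_*}$ does not depend on $x$, while if $0<r<1$ then $r^{\lambda_+}\le\phi(x,r)=r^{\lambda(x)}\le r^{\lambda_-}$, the two bounds being positive, finite and independent of $x$; hence for each fixed $r$ the function $x\mapsto\phi(x,r)$ is comparable to a positive constant, so $\lim_{|x|\to\infty}\M_p(f,B(x,r))/\phi(x,r)=0$ is equivalent to $\lim_{|x|\to\infty}\M_p(f,B(x,r))=0$, i.e.\ condition (iii) here. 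Invoking Corollary~\ref{cor:closure} then gives the asserted equivalence.

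Everything above is elementary, so I do not expect a genuine obstacle; the only point that calls for a little care is condition~\ref{xtoinfty}, where one must observe that for a fixed radius $r$ the growth function $\phi(\cdot,r)$ stays between two positive constants uniformly in the center — which is precisely what lets one drop the denominator.
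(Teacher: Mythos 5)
Your proposal is correct and follows essentially the same route the paper takes: the paper treats Corollary~\ref{cor:lambda} as an ``immediate consequence'' of Corollary~\ref{cor:closure}, and the only point it spells out is the observation about condition~(iii), namely that $r^{\lambda_+}\le\phi(x,r)\le r^{\lambda_-}$ when $0<r<1$ and $\phi(x,r)=r^{\lambda_*}$ when $r\ge1$, which is exactly the comparability-to-a-constant argument you use; your added verifications of $\phi\in\cGdec_p$, \eqref{NC}, \eqref{lim1} and \eqref{lim2} are the correct and routine checks the paper leaves implicit.
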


In the above theorem the condition \ref{xtoinfty} 
follows from that 
\begin{equation*}
 \lim_{|x|\to\infty}\frac{\M_p(f,B(x,r))}{\phi(x,r)}=0 \ \text{for each $r>0$},
\end{equation*}
and that
$r^{\lambda_+}\le\phi(x,r)\le r^{\lambda_-}$ if $0<r<1$, or, 
$\phi(x,r)=r^{\lambda_*}$ if $r\ge1$. 

We also prove the following theorem:

\begin{thm}\label{thm:bidual}
Let $p,q\in(1,\infty)$ and $1/p+1/q=1$, 
and let $\phi$ be in $\cGdec_p$ and satisfy \eqref{NC}, \eqref{lim1} and \eqref{lim2}. 
Then the bi-dual space of $\overline{\Cic(\R^d)}^{L_{p,\phi}(\R^d)}$ is $L_{p,\phi}(\R^d)$, that is,
\begin{equation}\label{CgM-gM}
 \Big(\overline{\Cic(\R^d)}^{L_{p,\phi}(\R^d)}\Big)^{**}=L_{p,\phi}(\R^d).
\end{equation}
\end{thm}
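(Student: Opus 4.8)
The plan is to reduce the statement to the identification of $X^{*}$ with the block space, and then to invoke Nakai's duality for Morrey spaces. Write $X:=\overline{\Cic(\R^{d})}^{L_{p,\phi}(\R^{d})}$, and recall that under the present hypotheses the block space $B^{[\phi,q]}(\R^{d})$ (with $q=p'$) is a predual of $L_{p,\phi}(\R^{d})$, i.e.\ $\big(B^{[\phi,q]}(\R^{d})\big)^{*}=L_{p,\phi}(\R^{d})$ with equivalent norms, the pairing being $(b,g)\mapsto\int_{\R^{d}}b\,g$, by \cite{Nakai2008AMS}; this is where $p>1$ enters. The core step I would carry out is to prove
\[
 X^{*}=B^{[\phi,q]}(\R^{d})
\]
with equivalent norms, again under the pairing $(b,f)\mapsto\int_{\R^{d}}bf$. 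Granting this, the bidual is computed in one line,
\[
 X^{**}=(X^{*})^{*}=\big(B^{[\phi,q]}(\R^{d})\big)^{*}=L_{p,\phi}(\R^{d}),
\]
and it remains only to check that the canonical embedding $X\hookrightarrow X^{**}$ corresponds to the inclusion $X\subset L_{p,\phi}(\R^{d})$: for $f\in X$ its canonical image sends $b\in X^{*}\cong B^{[\phi,q]}(\R^{d})$ to $\int_{\R^{d}}bf$, and under $\big(B^{[\phi,q]}(\R^{d})\big)^{*}\cong L_{p,\phi}(\R^{d})$ this functional is represented by $f$ itself. Hence $X^{**}=L_{p,\phi}(\R^{d})$, as asserted.

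For the inclusion $B^{[\phi,q]}(\R^{d})\hookrightarrow X^{*}$ (the routine half) I would argue directly: if $b$ is a $(\phi,q)$-block, say $\supp b\subset B$ and $\|b\|_{L^{q}}\le\big(|B|^{1/p}\phi(B)\big)^{-1}$, then for $f\in\Cic(\R^{d})$ Hölder's inequality gives
\[
 \Big|\int_{\R^{d}}bf\Big|\le\|b\|_{L^{q}}\,|B|^{1/p}\,\M_p(f,B)\le|B|^{1/p}\phi(B)\,\|f\|_{L_{p,\phi}}\cdot\big(|B|^{1/p}\phi(B)\big)^{-1}=\|f\|_{L_{p,\phi}},
\]
so $f\mapsto\int_{\R^{d}}bf$ extends to a functional on $X$ of norm $\le1$; summing, each $g=\sum_{k}\lambda_{k}b_{k}\in B^{[\phi,q]}(\R^{d})$ yields a functional on $X$ of norm $\ls\|g\|_{B^{[\phi,q]}}$. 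Injectivity holds because $\Cic(\R^{d})\subset X$ separates the points of $L^{1}_{\loc}(\R^{d})\supset B^{[\phi,q]}(\R^{d})$.

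The surjectivity $X^{*}\subset B^{[\phi,q]}(\R^{d})$ is the main obstacle. Given $L\in X^{*}$, I would first localise: for each $n$ the restriction of $L$ to $\{f\in\Cic(\R^{d}):\supp f\subset B(0,n)\}$ is, by local $L^{p}$--$L^{q}$ duality, represented by some $b_{n}\in L^{q}(B(0,n))$; these are compatible, so there is a single $b\in L^{q}_{\loc}(\R^{d})$ with $L(f)=\int_{\R^{d}}bf$ for all $f\in\Cic(\R^{d})$. The decisive point is to show $b\in B^{[\phi,q]}(\R^{d})$ with $\|b\|_{B^{[\phi,q]}}\ls\|L\|$, and here I would use the description of $X$ from Corollary~\ref{cor:closure}: since the elements of $X$ are exactly those $f\in L_{p,\phi}(\R^{d})$ obeying the three vanishing conditions (i)--(iii), and since \eqref{lim1} and \eqref{lim2} force $\phi(x,r)\to\infty$ as $r\to+0$ (locally uniformly in $x$) and $r^{d/p}\phi(0,r)\to\infty$ as $r\to\infty$, one can test $L$ against suitably normalised smooth functions that isolate $b$ on dyadic spatial annuli and on dyadic scales, estimate the resulting local block norms by $\|L\|$, and reassemble the pieces into an absolutely convergent block expansion of $b$. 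This is the Morrey counterpart of the elementary identity $c_{0}^{*}=\ell^{1}$ (and, one scale up, of $\mathrm{VMO}^{*}=H^{1}$), and it is exactly here that all the structural hypotheses on $\phi$ — membership in $\cGdec_{p}$, the nearness condition \eqref{NC}, and the two growth conditions \eqref{lim1}, \eqref{lim2} — are indispensable; without them the little space $X$ is not the right object and the quotient $X^{*}$ need not collapse onto the block space. Once $X^{*}=B^{[\phi,q]}(\R^{d})$ is in hand, Theorem~\ref{thm:bidual} follows as in the first paragraph.
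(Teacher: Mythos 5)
Your high-level reduction is exactly the paper's: prove $X^*=B^{[\phi,q]}(\R^d)$ (with $X=\overline{\Cic(\R^d)}^{L_{p,\phi}(\R^d)}$ and $q=p'$), then invoke Nakai's duality $\big(B^{[\phi,q]}(\R^d)\big)^*=L_{p,\phi}(\R^d)$, and your treatment of the easy inclusion $B^{[\phi,q]}(\R^d)\hookrightarrow X^*$ is essentially the one used in the paper. The problem is the converse inclusion: your argument there is not a proof but a prospectus. After correctly producing a local $L^q$ density $b$ with $L(f)=\int bf$ for $f\in\Cic(\R^d)$, the decisive claim --- that $b$ admits a block decomposition $b=\sum_k\lambda_kb_k$ with $\sum_k\lambda_k\ls\|L\|$ --- is simply asserted via ``test $L$ against suitably normalised smooth functions\ldots{} and reassemble the pieces into an absolutely convergent block expansion.'' You give no mechanism for producing summable coefficients; and since $\|\cdot\|_{B^{[\phi,q]}}$ is an infimum over all decompositions, testing $L$ against functions does not by itself exhibit any decomposition, let alone a near-optimal one. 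Appealing to the analogy with $c_0^*=\ell^1$ or $\mathrm{VMO}^*=H^1$ does not close this: the $c_0$ argument is genuinely one-dimensional, and Coifman--Weiss themselves prove $\mathrm{VMO}^*=H^1$ not by a direct reassembly but by a compactness argument --- precisely because the direct route is hard to run.

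The paper follows that Coifman--Weiss route and thereby avoids ever constructing a decomposition by hand. Concretely: $T=B^{[\phi,q]}(\R^d)\subset X^*$ is a total subspace, hence weak*-dense (Theorem~\ref{thm:total dense}); $X$ is separable (Proposition~\ref{prop:separable}), so bounded balls of $X^*$ are weak* metrizable and one can work with sequences; given $x^*\in X^*$, a weak*-approximating sequence $\{f_k\}\subset T$ is norm-bounded in $X^*$ by Banach--Steinhaus, hence bounded in $B^{[\phi,q]}$ by the norm comparison of Proposition~\ref{prop:sup in CgC}; finally Lemma~\ref{lem:subseq} --- the weak* sequential compactness of bounded sets of $B^{[\phi,q]}(\R^d)$, proved via the $3^n$-scale covering of Lemma~\ref{lem:balls}, the diagonalization Lemma~\ref{lem:series}, and the conditions \eqref{lim1}, \eqref{lim2} --- produces a limit $f\in B^{[\phi,q]}(\R^d)$ representing $x^*$. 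That compactness lemma is where the technical content lives, and it is exactly what your ``reassembly'' step would have to reproduce. As written, your proposal leaves that step as a gap.
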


It is known that
the block space $B^{[\phi,p']}(\R^d)$ is one of predual spaces of $L_{p,\phi}(\R^d)$
by \cite{Nakai2008AMS}, where $1/p+1/p'=1$.
To prove Theorem~\ref{thm:bidual} we will show that
\begin{equation}\label{dual Intro}
 \Big(\overline{\Cic(\R^d)}^{L_{p,\phi}(\R^d)}\Big)^*=B^{[\phi,p']}(\R^d),
\end{equation}
by using the method by Coifman and Weiss~\cite{Coifman-Weiss1977}. 
The duality \eqref{dual Intro} is a generalization of the results by Izumi, et al. \cite{Izumi-Sato-Yabuta2014} and 
by Rosenthal and Triebel~\cite{Rosenthal-Triebel2015},
that treated $L_{p,\lambda}(\T^d)$ and $L_{p,\lambda}(\R^d)$, respectively.

The organization of this paper is as follows. 
We first investigate the mollifier for the functions in $L_{p,\phi}(\R^d)$ in Section~\ref{s:mollifier}. 
Then, using the properties of the mollifier, we show  Theorem~\ref{thm:distance} and the duality \eqref{dual Intro} 
in Sections~\ref{s:distance} and \ref{s:duality}, respectively.
As an application of the characterization of the distance we prove the boundedness of 
Calder\'{o}n-Zygmund operators 
on $\overline{\Cic(\R^d)}^{L_{p,\phi}(\R^d)}$ 
in Section~\ref{s:CZO}. 
To prove the theorems we borrow ideas from the proofs in 
\cite{Yamaguchi-Nakai-Shimomura2023AMS} 
which dealt with Campanato spaces modulo constant functions. 
We can use weaker assumptions than 
\cite{Yamaguchi-Nakai-Shimomura2023AMS}. 
More precisely, we use the blocks instead of atoms, 
the mean value instead of the mean oscillation, 
and the cutoff by the characteristic functions of balls 
instead of continuous functions.
Moreover, to define the integral operators 
on $L_{p,\phi}(\R^d)$ 
we do not need the cancellation property on their kernels,
differ from the Campanato spaces modulo constant functions.

At the end of this section, we make some conventions. 
Throughout this paper, we always use $C$ to denote a positive constant 
that is independent of the main parameters involved 
but whose value may differ from line to line.
Constants with subscripts, such as $C_p$, 
are dependent on the subscripts.
If $f\le Cg$, we then write $f\ls g$ or $g\gs f$; 
and if $f \ls g\ls f$, we then write $f\sim g$.

\section{Mollifier}\label{s:mollifier}

In this section we investigate the mollifier for functions in $L_{p,\phi}(\R^d)$.

Let $\eta$ be a function on $\R^d$ 
such that
\begin{equation}\label{eta}
 \supp\eta\subset\overline{B(0,1)},
 \quad
 0\le\eta\le2
 \quad\text{and}\quad
 \int_{B(0,1)}\eta(y)\,dy=|B(0,1)|,
\end{equation}
and let
$\eta_t(x)=|B(0,t)|^{-1}\eta(x/t)$.
Then, for $f\in L^1_{\loc}(\R^d)$,
\begin{equation}\label{mollifier}
 \eta_t*f(x)=\fint_{B(x,t)}\eta((x-y)/t)f(y)\,dy.
\end{equation}

\begin{thm}\label{thm:mollifier}
Let $\eta$ satisfy \eqref{eta}. 
Let $p\in[1,\infty)$ and let $\phi$ be almost decreasing. 
Then there exists a positive constant $C$ such that,
for all $f\in L_{p,\phi}(\R^d)$ and $t>0$,
\begin{equation*}
 \|\eta_t*f\|_{L_{p,\phi}}\le C\|f\|_{L_{p,\phi}},
\end{equation*}
where the constant $C$ is dependent only on $d$, $p$ and $\phi$.
\end{thm}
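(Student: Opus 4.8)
The plan is to bound $\M_p(\eta_t*f,B)$ directly for an arbitrary ball $B=B(x,r)$, showing that $\M_p(\eta_t*f,B(x,r))\le C\phi(x,r)\|f\|_{L_{p,\phi}}$ with $C=C(d,p,\phi)$ independent of $t$; dividing by $\phi(x,r)$ and taking the supremum over all balls then yields the assertion. Note that $\eta_t*f$ is well defined for $f\in L_{p,\phi}(\R^d)\subset L^1_{\loc}(\R^d)$, since $\eta_t$ is bounded with compact support. I would split the estimate into two regimes according to whether $r\ge t$ or $r<t$.

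For $r\ge t$ I would write $\eta_t*f(z)=\int_{\overline{B(0,t)}}\eta_t(w)f(z-w)\,dw$ and apply Minkowski's integral inequality on the ball $B(x,r)$, using that $\int_{\overline{B(0,t)}}\eta_t(w)\,dw=1$ (a change of variables in \eqref{eta}) together with the inclusion $B(x-w,r)\subset B(x,2r)$ valid whenever $|w|\le t\le r$. This gives $\bigl(\int_{B(x,r)}|\eta_t*f|^p\bigr)^{1/p}\le\bigl(\int_{B(x,2r)}|f|^p\bigr)^{1/p}$ and hence $\M_p(\eta_t*f,B(x,r))\le 2^{d/p}\M_p(f,B(x,2r))$. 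Since $\phi$ is almost decreasing and $r<2r$, one has $\phi(x,2r)\le C_\phi\,\phi(x,r)$, so $\M_p(\eta_t*f,B(x,r))\le 2^{d/p}C_\phi\,\phi(x,r)\|f\|_{L_{p,\phi}}$.

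For $r<t$ the mollification scale exceeds the radius of $B$, and here the Minkowski argument only gives $\M_p(\eta_t*f,B(x,r))\le(2t/r)^{d/p}\M_p(f,B(x,2t))$, whose factor $(t/r)^{d/p}$ cannot be absorbed using merely that $\phi$ is almost decreasing. I would instead use the pointwise bound coming from $\|\eta_t\|_{L^\infty}\le 2|B(0,t)|^{-1}$ (from \eqref{eta}): for every $z\in B(x,r)$ one has $B(z,t)\subset B(x,2t)$, so
\[
 |\eta_t*f(z)|\le\frac{2}{|B(0,t)|}\int_{B(x,2t)}|f(y)|\,dy=2^{d+1}\M_1(f,B(x,2t))\le 2^{d+1}\M_p(f,B(x,2t)).
\]
This bound is independent of $z$, so averaging its $p$-th power over $B(x,r)$ gives $\M_p(\eta_t*f,B(x,r))\le 2^{d+1}\M_p(f,B(x,2t))$; and since $\phi$ is almost decreasing with $r<2t$, $\M_p(f,B(x,2t))\le\phi(x,2t)\|f\|_{L_{p,\phi}}\le C_\phi\,\phi(x,r)\|f\|_{L_{p,\phi}}$.

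Putting the two regimes together, for every ball $B(x,r)$ we obtain $\M_p(\eta_t*f,B(x,r))\le C\phi(x,r)\|f\|_{L_{p,\phi}}$ with $C=\max(2^{d/p},2^{d+1})C_\phi$ depending only on $d$, $p$ and the almost-decreasing constant $C_\phi$ of $\phi$; taking the supremum over all balls completes the argument. The only delicate point is the small-radius regime $r<t$: there the naive translation/Minkowski estimate loses a power of $t/r$, and the cure is to use the uniform $L^\infty$ control of $\eta_t$ provided by the normalization \eqref{eta}, which requires no cancellation on $\eta$.
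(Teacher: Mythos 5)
Your proof is correct and follows essentially the same route as the paper's: split into $r\ge t$ and $r<t$, bound the first by a standard convolution inequality (you use Minkowski's integral inequality where the paper invokes Young's inequality $\|\eta_t*(f\chi_{2B})\|_{L^p}\le\|\eta_t\|_{L^1}\|f\chi_{2B}\|_{L^p}$ --- essentially the same estimate), and bound the second by a uniform pointwise bound on $|\eta_t*f|$ over $B(x,r)$ via the $L^\infty$ control on $\eta_t$ and the inclusion $B(z,t)\subset B(x,2t)$, then use almost-decreasingness of $\phi$. The paper packages these two regimes as Lemmas~\ref{lem:(ef)B r<t} and~\ref{lem:(ef)B t<r} (the latter is also reused later), but the underlying estimates coincide with yours up to inconsequential constant bookkeeping.
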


To prove the above theorem 
we state the basic inequality on the mean value and show two lemmas.

Let $p\in[1,\infty)$.
For two balls $B_1$ and $B_2$,
if $B_1\subset B_2$, then
\begin{equation}\label{B1B2}
 \M_p(f,B_1)
 \le
 \left(\frac{|B_2|}{|B_1|}\right)^{1/p}\M_p(f,B_2).
\end{equation}

\begin{lem}\label{lem:(ef)B r<t}
Let $\eta$ satisfy \eqref{eta}. 
Let $1\le p<\infty$, 
and let $\phi$ be almost decreasing. 
If $0<r\le t$, then, for any $x\in\R^d$, 
\begin{equation}
 \frac{\M_p(\eta_{t}*f,B(x,r))}{\phi(x,r)}
 \le
 C\frac{\M_p(f,B(x,2t))}{\phi(x,2t)},
\end{equation}
where the constant $C$ is dependent only on $d$, $p$ and $\phi$.
\end{lem}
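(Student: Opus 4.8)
The plan is to get a pointwise bound for $\eta_t*f$ on the small ball $B(x,r)$ in terms of $\M_p(f,B(x,2t))$, and then to absorb the mismatch between the scales $r$ and $2t$ using that $\phi$ is almost decreasing.

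First I would record that \eqref{eta} normalizes $\eta_t$ to have integral one: by the substitution $w=tu$,
\[
 \int_{\R^d}\eta_t(w)\,dw=|B(0,t)|^{-1}t^d\int_{B(0,1)}\eta(u)\,du=1 .
\]
Hence for each fixed $y$ the kernel $z\mapsto\eta_t(y-z)$ is a probability density supported in $B(y,t)$, so Jensen's inequality gives $|\eta_t*f(y)|^p\le\int_{\R^d}\eta_t(y-z)|f(z)|^p\,dz\le 2|B(0,t)|^{-1}\int_{B(y,t)}|f(z)|^p\,dz$, using $0\le\eta_t\le 2|B(0,t)|^{-1}$. (Equivalently one could bound $|\eta_t*f(y)|\le 2\fint_{B(y,t)}|f|\le 2\M_p(f,B(y,t))$ by H\"older and then invoke \eqref{B1B2}; either route is routine.)

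Next I would use the hypothesis $r\le t$: for $y\in B(x,r)$ one has $B(y,t)\subset B(x,r+t)\subset B(x,2t)$, so the integral above is at most $\int_{B(x,2t)}|f|^p=|B(x,2t)|\,\M_p(f,B(x,2t))^p$, and since $|B(x,2t)|/|B(0,t)|=2^d$ this yields a bound independent of $y\in B(x,r)$:
\[
 |\eta_t*f(y)|^p\le 2^{d+1}\,\M_p(f,B(x,2t))^p .
\]
Averaging over $B(x,r)$ then gives $\M_p(\eta_t*f,B(x,r))\le 2^{(d+1)/p}\M_p(f,B(x,2t))$.

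Finally, since $r<2t$ and $\phi$ satisfies \eqref{AD}, we have $\phi(x,2t)\le C\phi(x,r)$, i.e. $\phi(x,r)^{-1}\le C\phi(x,2t)^{-1}$; multiplying the previous display by $\phi(x,r)^{-1}$ and applying this inequality gives the asserted estimate, with a constant depending only on $d$, $p$, and the almost-decreasing constant of $\phi$. I do not expect a real obstacle: the only two points needing a little care are the normalization of $\eta_t$ (so that Jensen applies) and the inclusion $B(y,t)\subset B(x,2t)$, which is exactly where $r\le t$ is used.
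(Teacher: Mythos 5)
Your proof is correct and follows essentially the same line as the paper's: a pointwise bound for $\eta_t*f$ on $B(x,r)$ via the inclusion $B(y,t)\subset B(x,2t)$, followed by the almost-decreasing property of $\phi$ applied to $r<2t$. The paper obtains the pointwise bound with H\"older on the probability average over $B(y,t)$ and then enlarges the ball, while you use Jensen and the uniform bound $\eta_t\le 2|B(0,t)|^{-1}$; these are interchangeable and give comparable constants.
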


\begin{proof}
If $y\in B(x,r)$ and $r\le t$, then $B(y,t)\subset B(x,2t)$ and
\begin{align*}
 |\eta_{t}*f(y)| 
 &=
 \bigg|\fint_{B(y,t)}\big(\eta((y-z)/t)\big)f(z)\,dz\bigg| \\
 &\le
 2^{d/p}\left(\fint_{B(x,2t)}\big|\big(\eta((y-z)/t)\big)f(z)\big|^p\,dz\right)^{1/p} \\
 &\le
 2\cdot2^{d/p}
  \left(\fint_{B(x,2t)}|f(z)|^p\,dz\right)^{1/p}.
\end{align*}
From this and the almost decreasingness of $\phi$ it follows that
\begin{equation*}
 \frac1{\phi(x,r)}\left(\fint_{B(x,r)}|\eta_{t}*f(y)|^p\,dy\right)^{1/p}
 \le
 C\frac{\M_p(f,B(x,2t))}{\phi(x,2t)}.
 \qedhere
\end{equation*}
\end{proof}

\begin{lem}\label{lem:(ef)B t<r}
Let $\eta$ satisfy \eqref{eta} and let $1\le p<\infty$.
If $0<t\le r$, then, for any $x\in\R^d$, 
\begin{equation}\label{eta*f}
 \bigg(\fint_{B(x,r)}\big|\eta_{t}*f(y)\big|^p\,dy\bigg)^{1/p} 
 \le
 2^{d/p}\M_p(f,B(x,2r)).
\end{equation}
Consequently,
\begin{equation}\label{f-eta*f}
 \bigg(\fint_{B(x,r)}\big|f(y)-\eta_{t}*f(y)\big|^p\,dy\bigg)^{1/p} 
 \le
 2\cdot2^{d/p}\M_p(f,B(x,2r)). 
\end{equation}
\end{lem}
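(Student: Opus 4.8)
The plan is to prove \eqref{eta*f} by a direct Fubini–type estimate, then derive \eqref{f-eta*f} by the triangle inequality in $L^p(B(x,r),dy/|B(x,r)|)$ together with the trivial bound $\M_p(f,B(x,r))\le 2^{d/p}\M_p(f,B(x,2r))$ coming from \eqref{B1B2}.

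For \eqref{eta*f}, first I would observe that for $y\in B(x,r)$ and $t\le r$ we have $B(y,t)\subset B(x,2r)$, so the mollifier integral \eqref{mollifier} only sees values of $f$ on $B(x,2r)$. Writing $\eta_t*f(y)=\fint_{B(y,t)}\eta((y-z)/t)f(z)\,dz$ and using $0\le\eta\le 2$ together with Jensen's (or Hölder's) inequality for the probability measure $dz/|B(y,t)|$, one gets
\begin{equation*}
 |\eta_t*f(y)|^p
 \le \fint_{B(y,t)}\eta((y-z)/t)^p|f(z)|^p\,dz
 \le \frac{2^p}{|B(0,t)|}\int_{\R^d}\chi_{B(x,2r)}(z)|f(z)|^p\,dz\cdot\frac{1}{?}
\end{equation*}
— here I would instead keep the cleaner route: by Jensen, $|\eta_t*f(y)|^p\le\fint_{B(y,t)}|\eta((y-z)/t)|^p|f(z)|^p\,dz$, and since $\int_{B(0,1)}\eta=|B(0,1)|$ forces $\fint_{B(y,t)}\eta((y-z)/t)\,dz=1$ (not the $L^p$ average), I would rather use Hölder splitting $\eta=\eta^{1/p'}\cdot\eta^{1/p}$ to obtain $|\eta_t*f(y)|^p\le\big(\fint\eta\big)^{p/p'}\fint_{B(y,t)}\eta((y-z)/t)|f(z)|^p\,dz=\fint_{B(y,t)}\eta((y-z)/t)|f(z)|^p\,dz$. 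Then I integrate in $y$ over $B(x,r)$, apply Fubini, use $\supp\eta\subset\overline{B(0,1)}$ to restrict $z$ to $B(x,2r)$, and use $\int_{\R^d}\eta((y-z)/t)\,dy=|B(0,t)|$ in the $y$-integral to get
\begin{equation*}
 \int_{B(x,r)}|\eta_t*f(y)|^p\,dy
 \le \frac{1}{|B(0,t)|}\int_{B(x,2r)}|f(z)|^p\Big(\int_{\R^d}\eta((y-z)/t)\,dy\Big)dz
 = \int_{B(x,2r)}|f(z)|^p\,dz.
\end{equation*}
Dividing by $|B(x,r)|$ and noting $|B(x,2r)|/|B(x,r)|=2^d$ gives \eqref{eta*f}.

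For \eqref{f-eta*f} I would write $f-\eta_t*f$ inside the $L^p$-average over $B(x,r)$, apply the triangle inequality to split into the average of $|f|$ and the average of $|\eta_t*f|$; the first is $\M_p(f,B(x,r))\le 2^{d/p}\M_p(f,B(x,2r))$ by \eqref{B1B2}, and the second is bounded by $2^{d/p}\M_p(f,B(x,2r))$ via \eqref{eta*f} just proved; adding gives the constant $2\cdot 2^{d/p}$. The only mildly delicate point is getting the constant in \eqref{eta*f} to be exactly $2^{d/p}$ rather than something with a factor of $\|\eta\|_\infty$; the Hölder-splitting trick above is precisely what removes the dependence on the bound $\eta\le 2$, since it only uses the normalization $\int\eta=|B(0,1)|$. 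I do not anticipate any real obstacle here — the lemma is a routine mollifier estimate — but care with which normalization of $\eta$ is used (the $L^1$-average equals $1$, not the $L^p$-average) is the one place a careless computation would pick up an unnecessary constant.
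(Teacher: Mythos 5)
Your proposal is correct and is essentially the same argument as the paper's: the paper notes $\eta_t*f(y)=\eta_t*(f\chi_{2B})(y)$ for $y\in B$ and then invokes Young's inequality $\|\eta_t*(f\chi_{2B})\|_{L^p}\le\|\eta_t\|_{L^1}\|f\chi_{2B}\|_{L^p}$ with $\|\eta_t\|_{L^1}=1$, which is precisely what your H\"older-splitting plus Fubini computation re-derives. The second inequality is obtained in both cases by the triangle inequality together with $\M_p(f,B)\le 2^{d/p}\M_p(f,2B)$ from \eqref{B1B2}.
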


\begin{proof}
Let $B=B(x,r)$. 
If $y\in B$ and $t\le r$, then $B(y,t)\subset 2B$ and
\begin{align*}
 \eta_{t}*f(y)
 &=
 \fint_{B(y,t)} \eta\left(\frac{y-z}{t}\right)f(z)\,dz \\
 &=
 \fint_{B(y,t)} \eta\left(\frac{y-z}{t}\right)(f\chi_{2B})(z)\,dz \\
 &=
 \eta_t*(f\chi_{2B})(y).
\end{align*}
Hence,
\begin{align*}
 \bigg(\fint_{B}\big|\eta_{t}*f(y)\big|^p\,dy\bigg)^{1/p} 
 &\le
 \frac1{|B|^{1/p}}\|\eta_t*(f\chi_{2B})\|_{L^p}
 \le
 \frac1{|B|^{1/p}}\|\eta_t\|_{L^1}\|f\chi_{2B}\|_{L^p} \\
 &=
 2^{d/p}\bigg(\fint_{2B}\big|f(y)\big|^p\,dy\bigg)^{1/p} 
 =
 2^{d/p}\M_p(f,2B).
\end{align*}
This shows \eqref{eta*f},
from which it follows that
\begin{align*}
 \bigg(\fint_{B}\big|f(y)-\eta_{t}*f(y)\big|^p\,dy\bigg)^{1/p} 
 &\le
 \bigg(\fint_{B}\big|f(y)\big|^p\,dy\bigg)^{1/p}
  +\bigg(\fint_{B}\big|\eta_{t}*f(y)\big|^p\,dy\bigg)^{1/p} \\
 &\le
 2\cdot 2^{d/p}\M_p(f,2B). 
\qedhere 
\end{align*}
\end{proof}

\begin{proof}[Proof of Theorem~\ref{thm:mollifier}]
For any ball $B(x,r)$,
if $r\le t$, then, using Lemma~\ref{lem:(ef)B r<t}, we have
\begin{equation*}
 \frac{\M_p(\eta_{t}*f,B(x,r))}{\phi(x,r)}
 \le
 C\|f\|_{L_{p,\phi}}. 
\end{equation*}
If $t\le r$, then, using \eqref{eta*f} in Lemma~\ref{lem:(ef)B t<r} and 
the inequality $\phi(x,2r)\ls \phi(x,r)$, we have
\begin{equation*}
 \frac{\M_p(\eta_{t}*f,B(x,r))}{\phi(x,r)}
 \le
 \frac{2\cdot2^{d/p}\M_p(f,B(x,2r))}{\phi(x,r)}
 \le
 C\|f\|_{L_{p,\phi}}. 
 \qedhere
\end{equation*}
\end{proof}

\section{Proof of Theorem~\ref{thm:distance}}\label{s:distance}

We divide the proof of Theorem~\ref{thm:distance}
into two parts, that is, 
$d(f,\Cic(\R^d))\le CA_{p,\phi}$ and $A_{p,\phi}\le d(f,\Cic(\R^d))$.
In this section we always assume that $p\in[1,\infty)$ and $\phi$ is in $\cGdec_p$ and satisfie \eqref{NC}.

\subsection{Proof of $d(f,\Cic(\R^d))\le CA_{p,\phi}(f)$}

In this subsection we prove this inequality without \eqref{lim1} and \eqref{lim2}.

Let $f\in L_{p,\phi}(\R^d)$. 
For any $\epsilon>0$,
from the definition of $A_{p,\phi}(f)$ there exist integers $i_{\epsilon}$ and $k_{\epsilon}$ ($i_{\epsilon}<k_{\epsilon}$) 
such that
\begin{equation*}
\sup\left\{\frac{\M_p(f,B(x,r))}{\phi(x,r)}:x\in\R^d,\ 0<r\le2^{i_{\epsilon}}\right\}<A_{p,\phi}(f)+\epsilon, 
\end{equation*}
\begin{equation*}
\sup\left\{\frac{\M_p(f,B(x,r))}{\phi(x,r)}:x\in\R^d,\ r\ge2^{k_{\epsilon}}\right\}<A_{p,\phi}(f)+\epsilon 
\end{equation*}
and 
\begin{equation*}
 \limsup_{|x|\to\infty}
 \max\left\{\frac{\M_p(f,B(x,2^{\ell}))}{\phi(x,2^{\ell})} :{\ell=i_{\epsilon}, i_{\epsilon}+1, \dots,\,k_{\epsilon}}\right\}
  \le A_{p,\phi}(f). 
\end{equation*}
By \eqref{B1B2} and \eqref{DC} on $\phi$ we have
\begin{equation*}
 \sup_{2^{\ell-1}\le r\le2^{\ell}}\frac{\M_p(f,B(x,r))}{\phi(x,r)}
 \le C_1\frac{\M_p(f,B(x,2^{\ell}))}{\phi(x,2^{\ell})},
 \quad
 \ell=i_{\epsilon}, i_{\epsilon}+1,\dots, k_{\epsilon},
\end{equation*}
where the positive constant $C_1\ge1$ is dependent only on $d$, $p$ and $\phi$.
Consequently,
\begin{equation*}
 \limsup_{|x|\to\infty}\sup_{2^{i_{\epsilon}}\le r\le2^{k_{\epsilon}}}
  \frac{\M_p(f,B(x,r))}{\phi(x,r)} \le C_1A_{p,\phi}(f). 
\end{equation*}
Then there exists an integer $j_{\epsilon}$ such that 
$j_{\epsilon}>k_{\epsilon}(>i_{\epsilon})$ 
and
\begin{equation*}
 \sup\left\{\frac{\M_p(f,B(x,r))}{\phi(x,r)}:B(x,r)\cap B(0,2^{j_{\epsilon}})=\emptyset\right\}< C_1A_{p,\phi}(f)+\epsilon.
\end{equation*}
Using $i_{\epsilon}$, $k_{\epsilon}$ and $j_{\epsilon}$,
we set
\begin{align*}
 \cB_1&=\left\{B(x,r):x\in\R^d,\ 0<r\le2^{i_{\epsilon}}\right\}, \\
 \cB_2&=\left\{B(x,r):x\in\R^d,\ r\ge2^{k_{\epsilon}}\right\}, \\
 \cB_3&=\left\{B(x,r):B(x,r)\cap B(0,2^{j_{\epsilon}})=\emptyset\right\}.
\end{align*}
Then 
\begin{equation}\label{B123}
 \frac{\M_p(f,B)}{\phi(B)}<C_1A_{p,\phi}(f)+\epsilon,
 \quad\text{if}\quad 
 B\in\cB_1\cup\cB_2\cup\cB_3.
\end{equation}

Next, let $s=2^{j_\epsilon+2}$, and let $f_1=f\chi_{B(0,s)}$. 
If $r>2^{j_\epsilon}$ or $B(x,r)\cap B(0,2^{j_\epsilon})=\emptyset$, 
then $B(x,r)\in\cB_2\cup\cB_3$. 
By the fact that $|f-f_1|\le|f|$ and \eqref{B123} we have
\begin{equation*}
\frac{\M_p(f-f_1,B(x,r))}{\phi(x,r)}
\le
\frac{\M_p(f,B(x,r))}{\phi(x,r)} 
\le
C_1A_{p,\phi}(f)+\epsilon.
\end{equation*}
If $r\le2^{j_\epsilon}$ and $B(x,r)\cap B(0,2^{j_\epsilon})\neq\emptyset$, 
then $f-f_1=0$ on $B(x,r)\subset B(0,s)$. 
Therefore, 
\begin{equation}\label{f-f1}
 \|f-f_1\|_{L_{p,\phi}}
 \le
 C_1A_{p,\phi}(f)+\epsilon. 
\end{equation} 
By the fact that $|f_1|\le|f|$ and \eqref{B123} we also have 
\begin{equation}\label{M f1}
 \frac{\M_p(f_1,B)}{\phi(B)}
 \le
 \frac{\M_p(f,B)}{\phi(B)} 
 \le
 C_1A_{p,\phi}(f)+\epsilon
 \quad\text{for}\quad
 B\in\cB_1\cup\cB_2\cup\cB_3.
\end{equation}

Next, let 
\begin{equation*}
C_\epsilon
=
\inf\left\{ |B(x,r)|^{1/p}\phi(x,r):2^{i_\epsilon-1}\le r\le2^{k_\epsilon-1}, B(x,r)\subset B(0,s)\right\}.
\end{equation*}
Recall that $s=2^{j_{\epsilon}+2}$.
Then $C_\epsilon>0$,
since 
\begin{equation*}
|B(x,r)|^{1/p}\phi(x,r)
\sim 
r^{d/p}\phi(x,s)
\sim 
r^{d/p}\phi(0,s)
\ge
2^{(i_\epsilon-1)d/p}\phi(0,s),
\end{equation*} 
if $2^{i_\epsilon-1}\le r\le2^{k_\epsilon-1}$ and $B(x,r)\subset B(0,s)$.
In the above we used \eqref{DC} and \eqref{NC} on $\phi$.
Let $\eta$ be in $\Cic(\R^d)$ and satisfy \eqref{eta}.
Since $\eta_{r}*f_1\to f_1$ in $L^p(B(0,s))$ as $r\to+0$,
we can choose $t>0$ such that $t<2^{i_\epsilon-1}$ and that
\begin{equation}\label{C_epsilon}
\left(\int_{B(0,s)} |f_1(y)-\eta_{t}*f_1(y)|^p\,dy\right)^{1/p}<C_\epsilon\epsilon.
\end{equation}
Let $f_2=\eta_{t}*f_1$.
Then $f_2\in\Cic(\R^d)$, that is,
\begin{equation}\label{f2-g}
 \min_{g\in\Cic(\R^d)}\|f_2-g\|_{L_{p,\phi}}=0.
\end{equation}
In the following, using \eqref{M f1}, we will show that
\begin{equation}\label{f1-f2}
 \|f_1-f_2\|_{L_{p,\phi}}
 \le
 C_2(C_1A_{p,\phi}(f)+\epsilon).
\end{equation}
Once we show \eqref{f1-f2}, 
combining this with \eqref{f-f1} and \eqref{f2-g},
we obtain that 
\begin{equation*}
 d(f,\Cic(\R^d))=\inf_{g\in\Cic(\R^d)}\|f-g\|_{L_{p,\phi}}\le (1+C_2)C_1A_{p,\phi}(f). 
\end{equation*}

Now, take a ball $B=B(x,r)$ arbitrarily. 

\noindent
Case 1: 
If $2B\in\cB_1\cup\cB_2\cup\cB_3$, then $B\in\cB_1\cup\cB_3$ or $r>t$. 
From Lemmas~\ref{lem:(ef)B r<t} and \ref{lem:(ef)B t<r} and \eqref{DC} on $\phi$ it follows that
\begin{align*}
 \frac{\M_p(f_1-f_2,B)}{\phi(B)}
 &\le
 \begin{cases}
  \dfrac{\M_p(f_1,B)}{\phi(B)}
  +
  C_3\dfrac{\M_p(f_1,B(x,2t))}{\phi(x,2t)}& (r\le  t) \medskip
 \\
  2\cdot 2^{d/p}\dfrac{\M_p(f_1,2B)}{\phi(B)} & (r>t)
\end{cases}
\\
 &\le
 C'_2(C_1A_{p,\phi}(f)+\epsilon). 
\end{align*}
In the above, if $r\le t$, then $B$ and $B(x,2t)$ are in $\cB_1$, since $t<2^{i_{\epsilon}-1}$. 
Otherwise, $2B\in\cB_1\cup\cB_2\cup\cB_3$ by the assumption.
Then we have the last inequality by \eqref{M f1}.

\noindent
Case 2: 
If $2B\notin\cB_1\cup\cB_2\cup\cB_3$, 
then $2^{i_\epsilon-1}\le r\le2^{k_\epsilon-1}$ and $B\subset B(0,s)$.
Hence, by \eqref{C_epsilon} we have 
\begin{align*}
\frac{\M_p(f_1-f_2,B)}{\phi(B)}
&=
\frac{1}{|B|^{1/p}\phi(B)}\left(\int_B|f_1(y)-f_2(y)|^p\,dy\right)^{1/p} \\
&\le
\frac{1}{C_\epsilon}\left(\int_{B(0,s)}|f_1(y)-f_2(y)|^p\,dy\right)^{1/p}
<\epsilon.
\end{align*} 
Then we have \eqref{f1-f2} and the proof is complete.

\if0
\noindent
{\bf Case 1.}
$r\ge s/4$:
In this case $B\in\cB_2$.

\noindent
{\bf Case 1-1.}
If $B\cap B(0,2s)=\emptyset$, 
then $f_2=0$ on $B$. 
Hence, by \eqref{M f1} we have
\begin{equation*}
 \frac{\M_p(f_1-f_2,B)}{\phi(B)}
 =
 \frac{\M_p(f_1,B)}{\phi(B)}
 \le
  2C_2(C_1A+\epsilon). 
\end{equation*}

\noindent
{\bf Case 1-2.}
If $B\cap B(0,2s)\ne\emptyset$, 
then, using 
the almost increasingness of $r\mapsto r^{d/p}\phi(x,r)$, 
and the conditions \eqref{NC} and \eqref{DC} on $\phi$, 
we have
\begin{equation*}
 (2s)^{d/p}\phi(0,2s)\ls(8r)^{d/p}\phi(0,8r)\sim(8r)^{d/p}\phi(x,8r)\sim r^{d/p}\phi(B),
\end{equation*}
and then
{\allowdisplaybreaks 
\begin{align*}
 &\frac{\M_p(f_2,B)}{\phi(B)}
 =
 \frac{\M_p(f_1h_{s},B)}{\phi(B)} \\
 &\le
 \frac1{\phi(B)|B|^{1/p}}\left(\int_{B(0,2s)}|f_1(y)|^p\,dy\right)^{1/p} \\
 &\le
 C_5\frac{\M_p(f_1,B(0,2s))}{\phi(0,2s)}.
\end{align*}
}
Since both $B$ and $B(0,2s)$ are in $\cB_2$,
from \eqref{M f1} it follows that
\begin{equation*}
 \frac{\M_p(f_1-f_2,B)}{\phi(B)}
 \le
 \frac{\M_p(f_1,B)}{\phi(B)}+\frac{\M_p(f_2,B)}{\phi(B)}
 \le
 (1+C_5)2C_2(C_1A+\epsilon).  
\end{equation*}

\noindent
{\bf Case 2.}
$r<s/4$:

\noindent
{\bf Case 2-1.}
If $B\in\cB_3$, then by \eqref{M f1} we have 
\begin{align*}
 \frac{\M_p(f_1-f_2,B)}{\phi(B)}
 &\le
 \frac{\M_p(f_1,B)}{\phi(B)}+\frac{\M_p(f_2,B)}{\phi(B)}\\
 &\le
 2\frac{\M_p(f_1,B)}{\phi(B)}\\
 &\le
 4C_2(C_1A+\epsilon).
\end{align*}

\noindent
{\bf Case 2-2.}
If $B\notin\cB_3$, then $B\subset B(0,s)$. 
Hence $\M_p(f_1-f_2,B)=0$, since $f_1-f_2=0$ on $B(0,s)$. 
\fi


\subsection{Proof of $A_{p,\phi} \le d(f,\Cic(\R^d))$}
Assume \eqref{lim1} and \eqref{lim2}.
Firstly, let $f\in\Cic(\R^d)$ and $\supp f \subset B(0,M)$. 
For a ball $B(x,r)$, if $r<M$ and $B(x,r)\cap B(0,M)\ne\emptyset$,
then $x \in B(0,2M)$. 
Hence,
\begin{equation*}
 \lim_{r\to+0}\sup_{x\in\R^d}\frac{\M_p(f,B(x,r))}{\phi(x,r)}
 =
 \lim_{r\to+0}\sup_{x\in B(0,2M)}\frac{\M_p(f,B(x,r))}{\phi(x,r)}.
\end{equation*}
From \eqref{lim1}
it follows that
\begin{equation*}
 \dlim_{r\to+0}\sup_{x\in\R^d}\frac{\M_p(f,B(x,r))}{\phi(x,r)}
 \le
 \dlim_{r\to+0}\sup_{x\in B(0,2M)}\frac{\|f\|_{L^\infty}}{\phi(x,r)}
 =0.
\end{equation*}
On the other hand, 
if $r\ge M$ and $B(x,r)\cap B(0,M)\ne\emptyset$, 
then $x\in B(0,2r)$.
Hence,
\begin{equation*}
 \lim_{r\to\infty}\sup_{x\in\R^d}\frac{\M_p(f,B(x,r))}{\phi(x,r)}
 =
 \lim_{r\to\infty}\sup_{x\in B(0,2r)}\frac{\M_p(f,B(x,r))}{\phi(x,r)}.
\end{equation*}
If $x\in B(0,2r)$, then $\phi(x,r)\sim\phi(x,2r)\sim\phi(0,2r)\sim\phi(0,r)$ by \eqref{DC} and \eqref{NC} on $\phi$,
which implies
\begin{equation*}
 \frac{\M_p(f,B(x,r))}{\phi(x,r)}
 \le
 \frac{\|f\|_{L^p}}{\phi(x,r)|B(x,r)|^{1/p}}
 \sim
 \frac{\|f\|_{L^p}}{r^{d/p}\phi(0,r)}.
\end{equation*}
Hence, from \eqref{lim2} it follows that
\begin{equation*}
 \dlim_{r\to\infty}\sup_{x\in\R^d}\frac{\M_p(f,B(x,r))}{\phi(x,r)}
 \ls
 \dlim_{r\to\infty}\frac{\|f\|_{L^p}}{r^{d/p}\phi(0,r)}
 =0.
\end{equation*}
For each $r>0$, take $x\in\R^d$ such that $\supp f\cap B(x,r)=\emptyset$.
Then
\begin{equation*}
 \frac{\M_p(f,B(x,r))}{\phi(x,r)}=0.
\end{equation*}
That is, 
$A_{p,\phi}(f)=0$.

Let $f\in L_{p,\phi}(\R^d)$.
Then, for any $\epsilon>0$, there exists $g\in\Cic(\R^d)$ such that 
$\|f-g\|_{L_{p,\phi}}<d(f,\Cic(\R^d))+\epsilon$.
That is, 
\begin{align*}
\frac{\M_p(f,B(x,r))}{\phi(x,r)}
&\le
\frac{\M_p(g,B(x,r))}{\phi(x,r)}
+
\|f-g\|_{L_{p,\phi}}\\
&<
\frac{\M_p(g,B(x,r))}{\phi(x,r)}
+
d(f,\Cic(\R^d))+\epsilon,
\end{align*}
for all balls $B(x,r)$.
Therefore, we have 
\begin{equation*}
 A_{p,\phi}(f)
 \le A_{p,\phi}(g)+d(f,\Cic(\R^d))+\epsilon
 =d(f,\Cic(\R^d))+\epsilon.
\end{equation*}
This shows the conclusion.

\section{Proof of Theorem~\ref{thm:bidual}}\label{s:duality}

First we state the definition of the block space $B^{[\phi,q]}(\R^d)$.

\begin{defn}[{$[\phi,q]$-block}]  \label{defn:block}
Let $\phi:\R^d\times(0,\infty)\to(0,\infty)$
and $1<q\le\infty$.
A function $b$ on $\R^d$ is called a $[\phi,q]$-block 
if there exists a ball $B$ 
such that
\begin{enumerate}
\item \label{supp}
 $\supp b \subset {B}$, 
\item \label{size}
 $\displaystyle
 \|b\|_{L^q} \le \frac{1}{|B|^{1/q'}\phi(B)}$,
\end{enumerate}
where 
$\|b\|_{L^q}$ is the $L^q$ norm of $b$ 
and $1/q+1/q'=1$. 
\end{defn} 
We call $B$ the corresponding ball to $b$. 
We also denote by $B[\phi,q]$ the set of all $[\phi,q]$-blocks. 

If $b$ is a $[\phi,q]$-block and a ball $B$ satisfies \ref{supp} and \ref{size},
then 
\begin{align*}
   \left|\int_{\R^d} b(x)g(x)\,dx\right|
 & \le
   \|b\|_{L^q} \left(\int_B |g(x)|^{q'}\,dx\right)^{1/q'} \\
 & \le
   \frac{1}{\phi(B)}
     \left(\frac{1}{|B|}\int_B |g(x)|^{q'}\,dx\right)^{1/q'} \\
 & \le
   \|g\|_{L_{q',\phi}}. 
\end{align*}
That is, 
the mapping $g\mapsto\int bg$ 
is a bounded linear functional on $L_{q',\phi}(\R^d)$
with norm not exceeding $1$.


\begin{defn}[$B^{[\phi,q]}(\R^d)$]     \label{defn:B^Phi}     
Let $\phi:\R^d\times(0,\infty)\to(0,\infty)$, 
$1<q\le\infty$ and $1/q+1/q'=1$.
Assume that $L_{q',\phi}(\R^d) \not=\{0\}$.
We define the space 
$B^{[\phi,q]}(\R^d)\subset(L_{q',\phi}(\R^d))^*$ 
as follows:
\begin{quote}
$f\in B^{[\phi,q]}(\R^d)$
if and only if 
there exist sequences $\{b_j\}\subset B[\phi,q]$
and positive numbers $\{\lambda_j\}$
such that
\begin{equation}          \label{expression0}
     f=\sum_j \lambda_j b_j \;\text{in}\;(L_{q',\phi}(\R^d))^*
     \quad\text{and}\quad
     \sum_j\lambda_j<\infty.
\end{equation}
\end{quote}
\end{defn}

In general, the expression \eqref{expression0} is not unique.
We define
\begin{equation*}
     \|f\|_{B^{[\phi,q]}}=
     \inf\left\{\sum_j\lambda_j\right\},
\end{equation*}
where the infimum is taken over all expressions as in \eqref{expression0}.
Then $\|f\|_{B^{[\phi,q]}}$ is a norm
and 
$B^{[\phi,q]}(\R^d)$ is a Banach space.

Next we state the known results.
The following two propositions give sufficient conditions of $L_{q',\phi}(\R^d)\ne\{0\}$
and $B^{[\phi,q]}(\R^d)\subset L^1_{\loc}(\R^d)$.
We give their proofs for convenience.

\begin{prop}[\cite{Tang-Nakai-Yang-Zhu-preparation}]\label{prop:Cic-gM}
Let $p\in[1,\infty)$, and let $\phi$ be in $\cGdec_p$ and satisfy \eqref{NC}. 
Then, for any ball $B$, 
$\chi_B\in L_{p,\phi}(\R^d)$ and $\|\chi_B\|_{L_{p,\phi}}
\le C/\phi(B)$, 
where the positive constant $C$ is independent of $B$. 
Consequently, $\Cic(\R^d)\subset L_{p,\phi}(\R^d)$. 
\end{prop}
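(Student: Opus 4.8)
The plan is to estimate $\M_p(\chi_B,B(x,r))/\phi(x,r)$ directly, splitting into cases according to the relative size and location of the test ball $B(x,r)$ versus the fixed ball $B = B(x_0,r_0)$. Write $c_B = x_0$ and recall $\M_p(\chi_B,B(x,r)) = (|B \cap B(x,r)|/|B(x,r)|)^{1/p}$, which is at most $\min\{1, (r_0/r)^{d/p}\}$ up to a dimensional constant, and which vanishes unless $B \cap B(x,r) \neq \emptyset$. So the only contributions come from balls $B(x,r)$ meeting $B$, and for those we must show $\M_p(\chi_B,B(x,r)) \le C\,\phi(x,r)/\phi(B)$. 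The two genuinely distinct regimes are $r \le r_0$ (the small-ball case) and $r > r_0$ (the large-ball case); in the latter we further note that if $B(x,r)$ meets $B$ then $|x - c_B| \le r + r_0 \le 2r$, so $x$ and $c_B$ are ``$2r$-close'' and the nearness condition \eqref{NC} applies to compare $\phi(x,r)$ with $\phi(c_B,r)$.

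For the small-ball case $r \le r_0$: here $\M_p(\chi_B,B(x,r)) \le 1$, so it suffices to show $\phi(x,r) \ge c\,\phi(B) = c\,\phi(c_B,r_0)$. Since $B(x,r)$ meets $B$ we have $|x - c_B| \le 2r_0$, so by iterating \eqref{NC} a bounded number of times (the number of doublings of $r_0$ needed to cover the distance $2r_0$, which is fixed) together with \eqref{DC}, $\phi(x,r_0) \sim \phi(c_B,r_0) = \phi(B)$; and then $\phi(x,r) \ge c\,\phi(x,r_0)$ because $r \le r_0$ and $s \mapsto \phi(x,s)$ is almost decreasing (part of $\phi \in \cGdec_p$). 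For the large-ball case $r > r_0$: now $\M_p(\chi_B,B(x,r)) \le C(r_0/r)^{d/p}$, and as noted $\phi(x,r) \sim \phi(c_B,r)$ by \eqref{NC}. It remains to show $(r_0/r)^{d/p} \le C\,\phi(c_B,r)/\phi(c_B,r_0)$, i.e. $r_0^{d/p}\phi(c_B,r_0) \le C\,r^{d/p}\phi(c_B,r)$ — but this is exactly the almost-increasingness of $s \mapsto s^{d/p}\phi(c_B,s)$, the other defining property of $\cGdec_p$, applied with $r_0 < r$. Taking the supremum over all $B(x,r)$ then yields $\|\chi_B\|_{L_{p,\phi}} \le C/\phi(B)$, with $C$ depending only on $d$, $p$, and the constants in \eqref{DC}, \eqref{NC}, \eqref{AI}, \eqref{AD}. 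Finally, any $g \in \Cic(\R^d)$ satisfies $|g| \le \|g\|_{L^\infty}\chi_B$ for a ball $B \supset \supp g$, whence $g \in L_{p,\phi}(\R^d)$ and $\Cic(\R^d) \subset L_{p,\phi}(\R^d)$.

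The only mild subtlety — the step I would flag as the main obstacle, though it is routine — is the bookkeeping in the small-ball case: transporting $\phi$ from the center $x$ of the test ball to the center $c_B$ of the fixed ball when these may be separated by a distance comparable to $r_0$ while the scale under consideration is $r_0$ itself. This is handled by a finite chain of applications of \eqref{NC}, the length of the chain being an absolute constant (roughly $\log_2$ of the ratio of separation to scale, here bounded by a fixed number since the separation is $\le 2r_0$ at scale $r_0$); no single inequality is hard, but one must be careful to invoke \eqref{NC} only at a scale comparable to the separation being bridged. Everything else is a direct application of the two monotonicity conditions packaged in $\cGdec_p$.
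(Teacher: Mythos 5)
Your proposal is correct and follows essentially the same two-case strategy as the paper: compare the radii of the fixed and test balls, use almost-increasingness of $s\mapsto s^{d/p}\phi(\cdot,s)$ when the test ball is the larger one, and use almost-decreasingness of $\phi$ when the test ball is the smaller one, transporting the center via \eqref{NC} and \eqref{DC} in each case. The only cosmetic difference is in your small-ball case: rather than ``iterating \eqref{NC} a bounded number of times,'' the paper first doubles the scale to $2r_0$ via \eqref{DC}, at which point the separation $|x-x_0|\le 2r_0$ satisfies the hypothesis of \eqref{NC} directly, so a single application suffices.
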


\begin{proof}
Let $B=B(x,r)$. 
We show that, for all balls $B(y,s)$, 
\begin{equation*}
 \frac{\M_p(\chi_{B},B(y,s))}{\phi(y,s)} \le C_\phi\,\phi(B). 
\end{equation*}
We may assume that $B(x,r) \cap B(y,s) \ne \emptyset$.
If $0<r<s$, then $|x-y|\le2s$. 
From the almost increasingness of $s\mapsto r^{d/p}\phi(y,s)$, 
the conditions \eqref{DC} and \eqref{NC} on $\phi$ 
it follows that 
\begin{equation}\label{rxr sys}
r^{d/p}\phi(B)
=
r^{d/p}\phi(x,r)
\ls
s^{d/p}\phi(x,s)
\sim
s^{d/p}\phi(y,s). 
\end{equation}
Thus
\begin{equation*}
\frac{\M_p(\chi_{B},B(y,s))}{\phi(y,s)}
\le
\frac{|B|^{1/p}}{|B(y,s)|^{1/p}\phi(y,s)}
=
\frac{r^{n/p}}{s^{d/p}\phi(y,s)}
\le
C_\phi\frac{1}{\phi(B)}. 
\end{equation*} 
If $r\ge s$, then $|x-y|\le2r$. 
From the conditions \eqref{DC}, \eqref{NC} 
and the almost decreasingness of $\phi$ 
it follows that 
\begin{equation}\label{xr ys}
\phi(B)=\phi(x,r)
\sim
\phi(x,2r)
\sim
\phi(y,2r)
\ls
\phi(y,s). 
\end{equation}
Thus
\begin{equation*}
\frac{\M_p(\chi_{B},B(y,s))}{\phi(y,s)}
\le
\frac{1}{\phi(y,s)}
\le
C_\phi\frac{1}{\phi(B)}. 
\end{equation*} 
The proof is complete. 
\end{proof}


\begin{prop}[\cite{Tang-Nakai-Yang-Zhu-preparation}]\label{prop:L 1 loc}
Let $p\in[1,\infty)$, $q\in(1,\infty]$ and $1/p+1/q=1$.
Let $\phi$ be in $\cGdec_p$ and satisfy \eqref{NC}. 
Then $B^{[\phi,q]}(\R^d)\subset L^1_{\loc}(\R^d)$
and the expression in \eqref{expression0} converges a.e.
\end{prop}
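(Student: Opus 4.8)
The plan is to reduce the whole statement to the single elementary estimate
\begin{equation*}
 \int_{B_0}|b|\,dx\le\|\chi_{B_0}\|_{L_{q',\phi}}
 \quad\text{for every ball }B_0\text{ and every }[\phi,q]\text{-block }b,
\end{equation*}
whose right-hand side is finite by Proposition~\ref{prop:Cic-gM}. Indeed, since $1/q'=1-1/q=1/p$, we have $L_{q',\phi}(\R^d)=L_{p,\phi}(\R^d)$ and $\cGdec_{q'}=\cGdec_p$, so Proposition~\ref{prop:Cic-gM} applies and gives $\chi_{B_0}\in L_{q',\phi}(\R^d)$; moreover the size condition \ref{size} on a block $b$ with corresponding ball $B$ reads $\|b\|_{L^q}\le|B|^{-1/p}\phi(B)^{-1}$. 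Fixing $B_0$ and a block $b$ with corresponding ball $B$, Hölder's inequality together with \ref{supp} and \ref{size} gives
\begin{equation*}
 \int_{B_0}|b|\,dx=\int_{B_0\cap B}|b|\,dx
 \le\|b\|_{L^q}|B_0\cap B|^{1/q'}
 \le\frac{|B_0\cap B|^{1/p}}{|B|^{1/p}\phi(B)}
 =\frac{\M_p(\chi_{B_0},B)}{\phi(B)}\le\|\chi_{B_0}\|_{L_{q',\phi}}.
\end{equation*}

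Next, for $f=\sum_j\lambda_j b_j$ as in \eqref{expression0} with corresponding balls $B_j$, summing the previous estimate yields $\sum_j\lambda_j\int_{B_0}|b_j|\,dx\le\|\chi_{B_0}\|_{L_{q',\phi}}\sum_j\lambda_j<\infty$ for every ball $B_0$. By the monotone convergence theorem $\sum_j\lambda_j|b_j|\in L^1(B_0)$, so $\sum_j\lambda_j|b_j(x)|<\infty$ for a.e.\ $x\in B_0$; letting $B_0=B(0,k)$ and $k\to\infty$ shows that $\sum_j\lambda_j b_j(x)$ converges absolutely for a.e.\ $x\in\R^d$ to a function $\tilde f$ with $\int_{B_0}|\tilde f|\,dx\le\|\chi_{B_0}\|_{L_{q',\phi}}\sum_j\lambda_j$ for every $B_0$, hence $\tilde f\in L^1_{\loc}(\R^d)$.

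It remains to identify $\tilde f$ with $f$ as an element of $(L_{q',\phi}(\R^d))^*$. The partial sums $S_N=\sum_{j\le N}\lambda_j b_j$ converge to $f$ in $(L_{q',\phi}(\R^d))^*$ by \eqref{expression0}, and for every ball $B_0$ they converge to $\tilde f$ in $L^1(B_0)$ by dominated convergence, the dominant being $\sum_j\lambda_j|b_j|\in L^1(B_0)$. Testing against $g\in\Cic(\R^d)$, which belongs to $L_{q',\phi}(\R^d)$ by Proposition~\ref{prop:Cic-gM}, we get $\langle f,g\rangle=\lim_N\int S_N g=\int\tilde f g$; since $\tilde f\in L^1_{\loc}(\R^d)$ this means $f$ acts by integration against $\tilde f$, so $B^{[\phi,q]}(\R^d)\subset L^1_{\loc}(\R^d)$ and the series in \eqref{expression0} converges to $f$ a.e. I expect the only point needing care to be this last identification, namely matching the $(L_{q',\phi})^*$-limit with the pointwise/$L^1_{\loc}$-limit of the partial sums; everything else is immediate from Hölder's inequality and the already proven fact that $\chi_{B_0}\in L_{q',\phi}(\R^d)$.
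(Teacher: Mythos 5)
Your proof is correct and follows essentially the same route as the paper: bound $\int_{B_0}\sum_j\lambda_j|b_j|$ via H\"older applied block by block, then pass to the a.e.\ limit by monotone/dominated convergence. The only difference is cosmetic: you recognize $|B_0\cap B_j|^{1/p}/(|B_j|^{1/p}\phi(B_j))$ as $\M_p(\chi_{B_0},B_j)/\phi(B_j)$ and invoke Proposition~\ref{prop:Cic-gM} to bound it by $\|\chi_{B_0}\|_{L_{p,\phi}}\ls 1/\phi(B_0)$, whereas the paper re-derives that inequality directly by the same two-case analysis ($r<s$ versus $r\ge s$) already used to prove Proposition~\ref{prop:Cic-gM}.
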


\begin{proof}
Let $\{b_j\}\subset B[\phi,q]$, $\lambda_j\ge0$ and $\sum_{j}\lambda_j<\infty$.
For each $j$, let $B_j$ be the supporting ball of $b_j$.
Then,
for any ball $B$,
\begin{align*}
 \int_B \sum_j \lambda_j |b_j|
 &\le
 \sum_j \lambda_j \int_{B\cap B_j} |b_j|
 \le
 \sum_j \lambda_j \|\chi_{B\cap B_j}\|_{L^p(\R^d)} \|b_j\|_{L^q(\R^d)} \\
 &\le
 \sum_j \lambda_j \frac{|B\cap B_j|^{1/p}}{|B_j|^{1/p}\phi(B_j)}.
\end{align*}
For any ball $B$,
if $B\cap B_j\ne\emptyset$, then 
$|B|^{1/p}\phi(B)\ls|B_j|^{1/p}\phi(B_j)$ or 
$\phi(B)\ls\phi(B_j)$ by \eqref{rxr sys} and \eqref{xr ys}. 
If $|B|^{1/p}\phi(B)\ls|B_j|^{1/p}\phi(B_j)$, then
\begin{equation*}
 \frac{|B\cap B_j|^{1/p}}{|B_j|^{1/p}\phi(B_j)}
 \le
 \frac{|B|^{1/p}}{|B_j|^{1/p}\phi(B_j)}
 \ls
 \frac{1}{\phi(B)}.
\end{equation*}
If $\phi(B)\ls\phi(B_j)$, then
\begin{equation*}
 \frac{|B\cap B_j|^{1/p}}{|B_j|^{1/p}\phi(B_j)}
 \le
 \frac{1}{\phi(B_j)}
 \ls
 \frac1{\phi(B)}.
\end{equation*}
Then $\sum_j \lambda_j b_j$ converges a.e. to 
some function in $L^1_{\mathrm{loc}}(\R^d)$.
\end{proof}

Next we state the known duality.

\begin{thm}[\cite{Nakai2008AMS}]\label{thm:duality}
Let $p,q\in(1,\infty)$ and $1/p+1/q=1$.
Assume that $\phi,q$ satisfy the conditions in Definition~\ref{defn:B^Phi}.
Then
\begin{equation*}
     \left(B^{[\phi,q]}(\R^d)\right)^*=L_{p,\phi}(\R^d).
\end{equation*}
More precisely, if $g\in L_{p,\phi}(\R^d)$,
then the mapping $\ell_g : f \mapsto \int_{\R^d} fg$, 
for $f\in L^q_\comp(\R^d)$,  
can be extended to a bounded linear functional on $B^{[\phi,q]}(\R^d)$.
Conversely, 
if $\ell$ is a bounded linear functional on $B^{[\phi,q]}(\R^d)$, 
then there exists $g \in L_{p,\phi}(\R^d)$
such that $\ell(f)=\ell_g(f)$ for $f\in L^q_\comp(\R^d)$. 
The norm $\|\ell\|_{(B^{[\phi,q]})^*}$ is equivalent to $\|g\|_{L_{p,\phi}}$.
\end{thm}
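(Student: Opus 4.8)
The plan is to prove the two inclusions $L_{p,\phi}(\R^d)\subset\bigl(B^{[\phi,q]}(\R^d)\bigr)^*$ and $\bigl(B^{[\phi,q]}(\R^d)\bigr)^*\subset L_{p,\phi}(\R^d)$, with two‑sided norm estimates, following the block‑space duality argument of Coifman and Weiss. The tool that drives both inclusions is a \emph{renormalization of blocks}: if $B$ is a ball and $b\in L^q(\R^d)$ is not identically zero with $\supp b\subset B$, then, putting $\lambda=\|b\|_{L^q}|B|^{1/q'}\phi(B)>0$, the function $b/\lambda$ is a $[\phi,q]$-block with corresponding ball $B$, so $b\in B^{[\phi,q]}(\R^d)$ with $\|b\|_{B^{[\phi,q]}}\le\|b\|_{L^q}|B|^{1/q'}\phi(B)$; equivalently, for each ball $B$ the natural inclusion $L^q(B)\hookrightarrow B^{[\phi,q]}(\R^d)$ of the $L^q$-functions supported in $B$ is bounded with norm at most $|B|^{1/q'}\phi(B)$. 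I would also record that $L^q_\comp(\R^d)$ is dense in $B^{[\phi,q]}(\R^d)$: given an admissible expression $f=\sum_j\lambda_jb_j$, the partial sums $f_N=\sum_{j\le N}\lambda_jb_j$ lie in $L^q_\comp(\R^d)$, and since $\sum_{j>N}\lambda_jb_j$ is an admissible expression for $f-f_N$ one has $\|f-f_N\|_{B^{[\phi,q]}}\le\sum_{j>N}\lambda_j\to0$.

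For the inclusion $L_{p,\phi}(\R^d)\subset\bigl(B^{[\phi,q]}(\R^d)\bigr)^*$, fix $g\in L_{p,\phi}(\R^d)$. For a $[\phi,q]$-block $b$ with corresponding ball $B$, Hölder's inequality gives, using $q'=p$,
\begin{equation*}
 \left|\int_{\R^d}b(x)g(x)\,dx\right|
 \le\|b\|_{L^q}\,\|g\chi_B\|_{L^p}
 \le\frac{1}{|B|^{1/p}\phi(B)}\cdot|B|^{1/p}\M_p(g,B)
 =\frac{\M_p(g,B)}{\phi(B)}\le\|g\|_{L_{p,\phi}}.
\end{equation*}
Hence, for $f\in L^q_\comp(\R^d)$ and any admissible expression $f=\sum_j\lambda_jb_j$, since convergence of this expression in $\bigl(L_{p,\phi}(\R^d)\bigr)^*$ means $\int_{\R^d}fg=\sum_j\lambda_j\int_{\R^d}b_jg$, we get $\bigl|\int_{\R^d}fg\bigr|\le\|g\|_{L_{p,\phi}}\sum_j\lambda_j$; taking the infimum over admissible expressions yields $\bigl|\int_{\R^d}fg\bigr|\le\|g\|_{L_{p,\phi}}\|f\|_{B^{[\phi,q]}}$. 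Thus $f\mapsto\int fg$ is bounded on $L^q_\comp(\R^d)$ for the $B^{[\phi,q]}$-norm and, by the density above, extends uniquely to $\ell_g\in\bigl(B^{[\phi,q]}(\R^d)\bigr)^*$ with $\|\ell_g\|\le\|g\|_{L_{p,\phi}}$.

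For the converse inclusion, let $\ell\in\bigl(B^{[\phi,q]}(\R^d)\bigr)^*$. By the renormalization, for every ball $B$ the functional $\ell$ restricts to a bounded linear functional on $L^q(B)$ of norm at most $\|\ell\|\,|B|^{1/q'}\phi(B)$; since $1<q<\infty$, the Riesz representation theorem on $L^q(B)$ provides a unique $g_B\in L^{q'}(B)=L^p(B)$ with $\ell(b)=\int_B bg_B$ for all $b\in L^q(B)$ and $\|g_B\|_{L^p}\le\|\ell\|\,|B|^{1/p}\phi(B)$. Uniqueness forces $g_{B'}=g_B$ a.e.\ on $B$ whenever $B\subset B'$, so exhausting $\R^d$ by the balls $B(0,n)$ produces a single $g\in L^p_\loc(\R^d)$ with $g\chi_B=g_B$ for every ball $B$; then
\begin{equation*}
 \frac{\M_p(g,B)}{\phi(B)}=\frac{\|g\chi_B\|_{L^p}}{|B|^{1/p}\phi(B)}\le\|\ell\|
 \qquad\text{for every ball }B,
\end{equation*}
so $g\in L_{p,\phi}(\R^d)$ with $\|g\|_{L_{p,\phi}}\le\|\ell\|$. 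Every $f\in L^q_\comp(\R^d)$ lies in some $L^q(B)$, so $\ell(f)=\int_{\R^d}fg=\ell_g(f)$, and by the density of $L^q_\comp(\R^d)$ we conclude $\ell=\ell_g$ on $B^{[\phi,q]}(\R^d)$. For the reverse norm bound, apply this construction to $\ell=\ell_g$ with $g\in L_{p,\phi}(\R^d)$: it produces $g'\in L_{p,\phi}(\R^d)$ with $\|g'\|_{L_{p,\phi}}\le\|\ell_g\|$ and $\int_{\R^d}f(g-g')=0$ for all $f\in L^q_\comp(\R^d)$, whence $g=g'$ a.e.\ and $\|g\|_{L_{p,\phi}}\le\|\ell_g\|$; together with the previous paragraph this gives $\|\ell_g\|\sim\|g\|_{L_{p,\phi}}$.

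I do not expect a deep obstacle; the delicate part is purely organizational — keeping straight that $B^{[\phi,q]}(\R^d)$ is simultaneously a space defined through block expansions with an infimum (atomic-type) norm and a subspace of $\bigl(L_{p,\phi}(\R^d)\bigr)^*$. Concretely, one must use that the definition of an admissible expression (convergence in $\bigl(L_{p,\phi}(\R^d)\bigr)^*$) is exactly what licenses passing from $\int fg$ to the termwise sum $\sum_j\lambda_j\int b_jg$ in the forward inclusion, and one must check that the Riesz representatives $g_B$ patch consistently in the converse one. Everything else is Hölder's inequality, the scalar $L^q$-duality on balls, and the density of $L^q_\comp(\R^d)$ in $B^{[\phi,q]}(\R^d)$.
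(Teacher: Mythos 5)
This statement is quoted in the paper as a known result (attributed to \cite{Nakai2008AMS}) and no proof is given there, so there is nothing to compare against line by line; judged on its own, your argument is the standard block-space duality proof (going back to Long and to Nakai's paper) and it is correct. The two pillars you isolate --- the renormalization $b\mapsto b/\lambda$ with $\lambda=\|b\|_{L^q}|B|^{1/q'}\phi(B)$, which makes $L^q(B)\hookrightarrow B^{[\phi,q]}(\R^d)$ bounded with norm $|B|^{1/q'}\phi(B)$, and the density of $L^q_\comp(\R^d)$ via truncation of block expansions --- are exactly what carries the forward direction (H\"older on each block plus termwise evaluation, which is licensed by the definition of convergence in $(L_{q',\phi}(\R^d))^*$) and the converse (Riesz representation on each $L^q(B)$ followed by patching the representatives). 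One point worth making explicit: you implicitly identify an $L^q_\comp$ function with the element of $B^{[\phi,q]}(\R^d)\subset(L_{p,\phi}(\R^d))^*$ it induces, and for this identification to be injective (so that ``$\ell(f)=\int fg$ for $f\in L^q_\comp$'' is meaningful as a statement about functions) one needs $L_{p,\phi}(\R^d)$ to separate points of $L^q_\comp(\R^d)$; under the paper's standing hypotheses this follows from Proposition~\ref{prop:Cic-gM}, since $\chi_B\in L_{p,\phi}(\R^d)$ and hence all bounded compactly supported functions lie in $L_{p,\phi}(\R^d)$. With that remark added, the proof is complete, and in fact yields the norm identity $\|\ell_g\|_{(B^{[\phi,q]})^*}=\|g\|_{L_{p,\phi}}$ rather than mere equivalence.
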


The above theorem is valid for spaces of homogeneous type. 
For Morrey space with non doubling measure, 
see Sawano and Tanaka~\cite{Sawano-Tanaka2009}. 
There are related results in 
\cite{Gogatishvili-Mustafayev2013, 
Maeda-Mizuta-Ohno-Shimomura2019, 
Mizuta2016, 
Ono2023}. 

To prove Theorem~\ref{thm:bidual} 
it is enough to prove the following theorem. 

\begin{thm}\label{thm:dual}
Let $p,q\in(1,\infty)$ and $1/p+1/q=1$, 
and let $\phi$ be in $\cGdec_p$ 
and satisfy \eqref{lim1}, \eqref{lim2} and \eqref{NC}. 
Then
\begin{equation}\label{CgM-B}
 \Big(\overline{\Cic(\R^d)}^{L_{p,\phi}(\R^d)}\Big)^*=B^{[\phi,q]}(\R^d).
\end{equation}
More precisely, for $f\in B^{[\phi,q]}(\R^d)$,
the linear functional 
\begin{equation}\label{dual}
 \langle f,v \rangle = \int_{\R^d}f(x)v(x)\,dx,  \quad v\in\Cic(\R^d)
\end{equation}
can be extended on $\overline{\Cic(\R^d)}^{L_{p,\phi}(\R^d)}$.
Conversely, 
each bounded linear functional on $\overline{\Cic(\R^d)}^{L_{p,\phi}(\R^d)}$
has the form \eqref{dual} 
for some $f\in B^{[\phi,q]}(\R^d)$.
The linear functional norm 
is equivalent to $\|f\|_{B^{[\phi,q]}}$.
\end{thm}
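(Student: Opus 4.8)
The plan is to follow the Coifman–Weiss duality scheme, using the block-space machinery set up above together with the known duality $(B^{[\phi,q]})^*=L_{p,\phi}$ from Theorem~\ref{thm:duality}. Write $X=\overline{\Cic(\R^d)}^{L_{p,\phi}(\R^d)}$ for brevity. For the easy inclusion, let $f\in B^{[\phi,q]}(\R^d)$, fix an expression $f=\sum_j\lambda_j b_j$ with $\sum_j\lambda_j<\infty$ and each $b_j$ a $[\phi,q]$-block. The computation displayed right after Definition~\ref{defn:block} shows that $v\mapsto\int b_j v$ is a bounded linear functional on $L_{p,\phi}(\R^d)$ of norm at most $1$, hence, summing, $v\mapsto\langle f,v\rangle=\int fv$ extends (by Proposition~\ref{prop:L 1 loc} the series converges suitably) to a bounded linear functional on $L_{p,\phi}(\R^d)$ of norm $\le\sum_j\lambda_j$; taking the infimum over expressions and restricting to $X$ gives a functional on $X$ with norm $\le\|f\|_{B^{[\phi,q]}}$.

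For the converse, let $\ell\in X^*$. Since $X$ contains $\Cic(\R^d)$ and in particular, by Proposition~\ref{prop:Cic-gM} and the Zorko-type scaling, contains $L^q_{\comp}$-functions supported in any fixed ball (after mollification these lie in $X$; more carefully one uses that a compactly supported $L^q$ function lies in $X$ because it satisfies the three vanishing conditions of Corollary~\ref{cor:closure}, which is where \eqref{lim1} and \eqref{lim2} enter), the restriction of $\ell$ to $L^q(B)$ for each ball $B$ is a bounded functional. For a ball $B=B(z,R)$ and $v\in L^q(B)$ with $v$ supported in $B$, one has $\|v\|_{L_{p,\phi}}\le \|v\|_{L^q(B)}\,|B|^{-1/q}/\phi(B)\cdot C$ (this is exactly the block estimate read backwards, using $\phi\in\cGdec_p$ and \eqref{NC} as in Proposition~\ref{prop:Cic-gM}), so $v\mapsto\ell(v)$ is bounded on $L^q(B)$ and by Riesz representation there is $g_B\in L^{q'}(B)=L^p(B)$ with $\ell(v)=\int g_B v$ for all such $v$. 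The functions $g_B$ are consistent on overlaps, so they glue to a single $g\in L^1_{\loc}(\R^d)$ with $\ell(v)=\int gv$ for every $v\in L^q_{\comp}(\R^d)$, in particular for $v\in\Cic(\R^d)$; this is the desired representation with $f=g$, and it remains to identify $g$ as an element of $B^{[\phi,q]}(\R^d)$ with the correct norm.

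That identification is the crux, and it is precisely the point at which Theorem~\ref{thm:duality} is invoked in ``dual'' form. The linear functional $\ell$ on $X$ extends, by Hahn–Banach, to a bounded functional $\widetilde\ell$ on all of $L_{p,\phi}(\R^d)$ with the same norm. But $L_{p,\phi}(\R^d)=(B^{[\phi,q]}(\R^d))^*$ by Theorem~\ref{thm:duality}; so $\widetilde\ell$ is an element of the bidual $(B^{[\phi,q]})^{**}$. The extension is not canonical, so instead I would argue directly: the pairing of $g$ against blocks is controlled. Namely, for any $[\phi,q]$-block $b$ with corresponding ball $B$, the function $b$ (or rather $\overline{\sgn(g)}|b|$-type test functions built from $b$) lies in $L^q_{\comp}$ with $\|b\|_{L_{p,\phi}}\le C$, hence $\big|\int g b\big|=|\ell(b)|\le C\|\ell\|_{X^*}$, uniformly over blocks. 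By the description of $B^{[\phi,q]}$ as the space whose unit ball is the closed absolutely convex hull of the blocks, together with the fact (from Theorem~\ref{thm:duality}) that $g\in L_{p,\phi}$ is equivalent to $g$ inducing a bounded functional on $B^{[\phi,q]}$, a duality/bipolar argument upgrades the uniform bound on blocks to $g\in B^{[\phi,q]}(\R^d)$ with $\|g\|_{B^{[\phi,q]}}\lesssim\|\ell\|_{X^*}$. The main obstacle is exactly this last bipolar step: one must verify that testing against the generating blocks (rather than against all of $L_{p,\phi}$) suffices to place $g$ in $B^{[\phi,q]}$, which requires knowing that $B^{[\phi,q]}$ is genuinely the block-generated space with norm given by the atomic infimum and that its natural embedding into $(L_{p,\phi})^*$ is isometric up to constants — all of which is supplied by Theorem~\ref{thm:duality} and the remarks following Definitions~\ref{defn:block} and \ref{defn:B^Phi}. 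Finally, the two norm inequalities obtained in the two directions give the asserted equivalence $\|\ell\|\sim\|f\|_{B^{[\phi,q]}}$, completing the proof.
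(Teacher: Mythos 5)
Your ``easy'' direction is fine and agrees with what the paper uses: $B^{[\phi,q]}(\R^d)$ embeds into $X^*$ (with $X=\overline{\Cic(\R^d)}^{L_{p,\phi}(\R^d)}$) with the expected norm bound. The problem is the converse, and it sits exactly at the step you yourself flag as ``the crux.'' Having produced $g\in L^1_{\loc}$ with $\ell(v)=\int gv$ for $v\in L^q_{\comp}$, you try to place $g$ in $B^{[\phi,q]}(\R^d)$ by testing $g$ against blocks and invoking a bipolar argument. But look at which side of the pairing that puts $g$ on: a $[\phi,q]$-block $b$ is an element of $B^{[\phi,q]}(\R^d)$, so knowing $\big|\int g\,b\big|\le C$ uniformly over blocks shows that $g$ defines a bounded linear functional on $B^{[\phi,q]}(\R^d)$, i.e.\ $g\in\bigl(B^{[\phi,q]}(\R^d)\bigr)^*=L_{p,\phi}(\R^d)$. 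That is the \emph{dual} of the block space, not the block space. The bipolar of the set of blocks (computed in $\bigl(B^{[\phi,q]},L_{p,\phi}\bigr)$ duality) lives back in $B^{[\phi,q]}$, but a scalar bound $|\int gb|\le C$ only tells you $g$ is in the polar set $\subset L_{p,\phi}$; it gives no block decomposition of $g$ and no control of $\|g\|_{B^{[\phi,q]}}$. In short, this step lands $g$ in the wrong space, and the claimed norm estimate $\|g\|_{B^{[\phi,q]}}\lesssim\|\ell\|_{X^*}$ does not follow. (There is a secondary issue as well: for $p>2$ one has $q=p'<2<p$, so a $[\phi,q]$-block need not lie in $L^p_{\loc}$, hence not in $L_{p,\phi}(\R^d)$, and $\ell(b)$ need not even be defined; the same concern applies to the scaling claim $\|v\|_{L_{p,\phi}}\lesssim\|v\|_{L^q(B)}/(|B|^{1/q}\phi(B))$ used to justify the Riesz step.)

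The paper sidesteps both problems by never producing a candidate density $g$ first and then trying to certify it. Instead it argues at the level of functionals: $B^{[\phi,q]}(\R^d)$ is a total subspace of $X^*$, hence weak*-dense (Theorem~\ref{thm:total dense}); $X$ is separable (Proposition~\ref{prop:separable}), so closed balls of $X^*$ are weak*-metrizable, giving a sequence $\{f_k\}\subset B^{[\phi,q]}$ with $\langle f_k,v\rangle\to\langle x^*,v\rangle$; Banach--Steinhaus together with Proposition~\ref{prop:sup in CgC} shows $\sup_k\|f_k\|_{B^{[\phi,q]}}<\infty$; and then the key technical input is Lemma~\ref{lem:subseq}, a sequential weak*-compactness statement for the unit ball of $B^{[\phi,q]}(\R^d)$ built from the dyadic block decomposition (Lemma~\ref{lem:balls}), the Coifman--Weiss diagonalization (Lemma~\ref{lem:series}), and crucially the hypotheses \eqref{lim1} and \eqref{lim2} to control the small- and large-scale tails of the block sum. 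That lemma is what produces the limit element $f\in B^{[\phi,q]}(\R^d)$ with $\langle x^*,v\rangle=\int fv$. Your proposal has no analogue of this compactness argument, and it is precisely what the bipolar shortcut cannot replace.
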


By the condition \eqref{NC} on $\phi$, the condition \eqref{lim2} is equivalent to
\begin{equation*}
 \lim_{r\to\infty}r^{d/p}\phi(x,r)=\infty 
 \quad 
 \text{for each $x\in\R^d$}.
\end{equation*}


First we recall two  known theorems in functional analysis.

\begin{thm}[{cf. \cite[Exercise~41 (p.~439)]{Dunford-Schwartz1958}}]
\label{thm:total dense}
Let $X$ be a locally convex linear topological space, 
and let $X^*$ be its dual space.
If $T$ is a linear subspace of $X^*$,
then $T$ is dense in the weak* topology 
if and only if 
$T$ is a total set of functionals on $X$.
\end{thm}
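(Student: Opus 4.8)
I would prove the two implications separately, the only substantive input being the Hahn--Banach theorem together with the classical identification of the dual of $(X^*,\sigma(X^*,X))$ with $X$ itself: every weak* continuous linear functional on $X^*$ is of the form $g\mapsto g(x)$ for a unique $x\in X$. Recall that $T$ is called total if, whenever $x\in X$ satisfies $f(x)=0$ for all $f\in T$, one has $x=0$.

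For the direction ``$T$ weak* dense $\Rightarrow$ $T$ total'': if $f(x)=0$ for every $f\in T$, then the weak* closed subspace $\{g\in X^*:g(x)=0\}$ (evaluation at $x$ is weak* continuous by the very definition of $\sigma(X^*,X)$) contains $T$, hence contains its weak* closure, which is all of $X^*$. So $g(x)=0$ for every $g\in X^*$, and since $X$ is locally convex, Hahn--Banach forces $x=0$.

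For the converse I would argue by contradiction. Suppose $T$ is total but its weak* closure $\overline{T}$ is a proper closed subspace of $X^*$; pick $g_0\in X^*\setminus\overline{T}$. Applying the geometric Hahn--Banach separation theorem in the locally convex space $(X^*,\sigma(X^*,X))$ to the point $g_0$ and the closed convex set $\overline{T}$ yields a weak* continuous linear functional $\Phi$ on $X^*$ that vanishes on $\overline{T}$ (a linear functional bounded above on a subspace is identically zero there) but satisfies $\Phi(g_0)\ne0$. By the identification above, $\Phi(g)=g(x)$ for some $x\in X$; then $f(x)=0$ for all $f\in T$ while $g_0(x)=\Phi(g_0)\ne0$, so $x\ne0$, contradicting totality. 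Hence $\overline{T}=X^*$.

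I expect the main (and essentially only) obstacle to be the identification $(X^*,\sigma(X^*,X))^*=X$ and the careful use of the geometric Hahn--Banach theorem in the weak* topology; these are standard facts of locally convex theory, precisely the content of the cited exercise in Dunford--Schwartz, and granting them the argument is short.
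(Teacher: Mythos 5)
The paper does not prove this statement; it merely cites it from Dunford--Schwartz (Exercise~41, p.~439) and uses it as a known tool. Your proof is a correct and entirely standard argument: the forward direction uses that evaluation at $x$ is $\sigma(X^*,X)$-continuous plus Hahn--Banach to conclude $x=0$, and the converse uses the geometric Hahn--Banach separation theorem in $(X^*,\sigma(X^*,X))$ together with the identification $(X^*,\sigma(X^*,X))^*=X$. The only point worth making explicit is that one needs $X$ to be Hausdorff (which is the Dunford--Schwartz convention for a linear topological space) for Hahn--Banach to separate $x$ from $0$ in the first direction, and that a linear functional bounded above on a subspace must vanish there, which you correctly invoke in the converse. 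Since the paper offers no proof to compare against, there is nothing to contrast beyond noting your argument is the one a reader would reconstruct from the cited reference.
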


\begin{thm}[{cf. \cite[Theorem~3.17]{Rudin1991}}]
\label{thm:seq Banach-Alaoglu}
Let $X$ be a separable normed vector space. 
If $\{\Lambda_n\}$ is a bounded sequence in $X^*$,
then there exists a subsequence $\{\Lambda_{n_i}\}$
such that it converges to an element in $X^*$ 
in the weak* topology.
\end{thm}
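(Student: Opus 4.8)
The plan is to follow the classical diagonal-subsequence argument. Let $M>0$ be a bound with $\|\Lambda_n\|_{X^*}\le M$ for all $n$; such an $M$ exists by hypothesis. Since $X$ is separable, fix a countable dense subset $\{x_k\}_{k\in\N}$ of $X$. The idea is to first pin down convergence of $\Lambda_n$ along a subsequence on the dense set, then propagate it to all of $X$ by a uniform-boundedness (that is, $\ve/3$) argument, and finally identify the limit as an element of $X^*$.

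First I would extract a subsequence along which the action on each $x_k$ converges. For fixed $k$ the scalar sequence $\{\Lambda_n(x_k)\}_n$ is bounded by $M\|x_k\|$, hence has a convergent subsequence by the Bolzano--Weierstrass theorem (the scalar field being complete). Extracting such subsequences successively over $k=1,2,\dots$ and then taking the diagonal, I obtain one subsequence $\{\Lambda_{n_i}\}_i$ for which $\lim_i\Lambda_{n_i}(x_k)$ exists for every $k$. The next step upgrades this to convergence at every point of $X$: given $x\in X$ and $\ve>0$, choose $x_k$ with $\|x-x_k\|<\ve/(3M)$; since $\{\Lambda_{n_i}(x_k)\}_i$ is Cauchy, for $i,j$ large we get
\[
 |\Lambda_{n_i}(x)-\Lambda_{n_j}(x)|
 \le |\Lambda_{n_i}(x-x_k)|+|\Lambda_{n_i}(x_k)-\Lambda_{n_j}(x_k)|+|\Lambda_{n_j}(x_k-x)|
 < \frac{\ve}{3}+\frac{\ve}{3}+\frac{\ve}{3}=\ve,
\]
so $\{\Lambda_{n_i}(x)\}_i$ is Cauchy and hence converges. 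Define $\Lambda(x)=\lim_i\Lambda_{n_i}(x)$.

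Finally I would check that $\Lambda\in X^*$: linearity of $\Lambda$ follows at once from the linearity of each $\Lambda_{n_i}$ together with the linearity of the scalar limit, and passing $|\Lambda_{n_i}(x)|\le M\|x\|$ to the limit gives $|\Lambda(x)|\le M\|x\|$, so $\Lambda$ is bounded with $\|\Lambda\|_{X^*}\le M$. By the definition of the weak* topology, the statement $\Lambda_{n_i}(x)\to\Lambda(x)$ for all $x\in X$ is precisely the assertion that $\Lambda_{n_i}\to\Lambda$ weak*, which finishes the proof. The only delicate point is the bookkeeping of the diagonal extraction and the observation that the $\ve/3$ estimate relies solely on the uniform bound $M$; in particular no completeness of $X$ is required, and boundedness of the limit functional is automatic, so the limit genuinely lands in $X^*$.
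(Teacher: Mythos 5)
Your proof is correct and is exactly the standard diagonal-plus-$\ve/3$ argument that the paper's citation to Rudin points to; the paper itself states this theorem without proof, so there is nothing to diverge from. All steps check out: Bolzano--Weierstrass on the dense set, the uniform bound $M$ carrying the Cauchy property to all of $X$, and the limit functional inheriting linearity and the bound $\|\Lambda\|_{X^*}\le M$, which is precisely weak* convergence.
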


Next we show two propositions and three lemmas. 
While the proofs of the following two propositions 
are similar to Propositions~5.5 and 5.6 
in \cite{Yamaguchi-Nakai-Shimomura2023AMS}, 
we give their proofs to clarify the differences.

\begin{prop}\label{prop:separable}
Let $p\in[1,\infty)$.
If $\phi$ is in $\cGdec_p$ and satisfies \eqref{NC}, 
then $\overline{\Cic(\R^d)}^{L_{p,\phi}(\R^d)}$
is a separable space.
\end{prop}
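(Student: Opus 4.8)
The plan is to exhibit a countable dense subset of $\overline{\Cic(\R^d)}^{L_{p,\phi}(\R^d)}$ directly. Since this closure is a metric space, separability will follow once we produce a countable subset whose closure contains $\Cic(\R^d)$, because $\Cic(\R^d)$ is by definition dense in the closure. So the real task is to approximate an arbitrary $g\in\Cic(\R^d)$, in the $L_{p,\phi}$-norm, by elements of a fixed countable family.

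First I would fix an exhaustion of $\R^d$ by balls $B(0,N)$, $N\in\N$, and for each $N$ consider functions supported in $B(0,N)$. For such $g$ with $\supp g\subset B(0,N)$, the key observation is that on the class of functions supported in a fixed ball, the $L_{p,\phi}$-norm is comparable to (a multiple of) the $L^p$-norm: by Proposition~\ref{prop:Cic-gM} and the argument there, for $h$ supported in $B(0,N)$ one has $\M_p(h,B(y,s))/\phi(y,s)\ls \|h\|_{L^p(B(0,N))}\cdot C_N$ uniformly over all balls $B(y,s)$ (the constant depending on $N$ via $\inf\{|B(y,s)|^{1/p}\phi(y,s): B(y,s)\subset B(0,2N),\ s\ \text{not too small}\}$, together with the $\cGdec_p$ and \eqref{NC} conditions handling the small-$s$ and large-$s$ cases). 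Hence on this subspace $L_{p,\phi}$-convergence is implied by $L^p$-convergence. Since $\{h\in\Cic(\R^d):\supp h\subset B(0,N)\}$ is separable in the $L^p$ norm — for instance one may use polynomials with rational coefficients multiplied by a fixed cutoff, or rational linear combinations of a countable dense set of mollified indicator functions — we get a countable set $\cD_N\subset\Cic(\R^d)$ which is $L_{p,\phi}$-dense in $\{g\in\Cic(\R^d):\supp g\subset B(0,N)\}$.

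Then I would set $\cD=\bigcup_{N\in\N}\cD_N$, a countable subset of $\Cic(\R^d)\subset\overline{\Cic(\R^d)}^{L_{p,\phi}(\R^d)}$. Given arbitrary $g\in\Cic(\R^d)$, choose $N$ with $\supp g\subset B(0,N)$ and approximate $g$ by an element of $\cD_N$ in $L_{p,\phi}$; thus $\cD$ is dense in $\Cic(\R^d)$, hence dense in its closure, so $\overline{\Cic(\R^d)}^{L_{p,\phi}(\R^d)}$ is separable.

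The main obstacle is the comparison $\|\cdot\|_{L_{p,\phi}}\ls C_N\|\cdot\|_{L^p}$ on functions supported in $B(0,N)$: one must check that the supremum over \emph{all} balls $B(y,s)$ — including tiny balls and enormous balls, and balls far from the origin — is controlled, not just balls comfortably inside $B(0,N)$. This is exactly the kind of estimate already carried out in the proof of Proposition~\ref{prop:Cic-gM} (splitting into $s$ large versus small relative to the size of the supporting ball, and using almost increasingness of $s\mapsto s^{d/p}\phi(y,s)$, almost decreasingness of $\phi$, and \eqref{NC}); so the argument is available, but it should be stated carefully since the constant must be allowed to depend on $N$. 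Everything else is routine separability bookkeeping.
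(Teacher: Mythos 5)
Your outline has the right shape (a countable family obtained by exhausting $\R^d$ with balls and approximating smooth compactly supported functions in a simpler auxiliary norm), but the crucial auxiliary estimate you invoke is false, and this is a genuine gap, not just a detail to be checked. You claim that for $h$ supported in $B(0,N)$ one has $\|h\|_{L_{p,\phi}}\ls C_N\|h\|_{L^p}$. The obstruction is precisely the small-$s$ regime you flag but do not resolve: the pointwise bound
\[
\frac{\M_p(h,B(y,s))}{\phi(y,s)}\le\frac{\|h\|_{L^p}}{|B(y,s)|^{1/p}\phi(y,s)}
\]
requires a positive lower bound on $s^{d/p}\phi(y,s)$ over \emph{all} balls meeting $B(0,N)$, but $\phi\in\cGdec_p$ only makes $s\mapsto s^{d/p}\phi(y,s)$ almost increasing, so it is allowed to tend to $0$ as $s\to+0$ — and it does for the model case $\phi(x,r)=r^\lambda$ with $-d/p<\lambda<0$, where for $h_\epsilon=\chi_{B(0,\epsilon)}$ the ratio $\|h_\epsilon\|_{L_{p,\phi}}/\|h_\epsilon\|_{L^p}\sim\epsilon^{-\lambda-d/p}\to\infty$. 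More structurally, if your inequality held then $L^p$-approximability of compactly supported functions by $\Cic$ would force every compactly supported $f\in L_{p,\phi}(\R^d)$ into $\overline{\Cic(\R^d)}^{L_{p,\phi}(\R^d)}$, contradicting Zorko's example $f_{x_0}$ and the whole point of Corollary~\ref{cor:closure}.

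The fix is to take $L^\infty$, not $L^p$, as the intermediate norm. Then $\M_p(h,B(y,s))/\phi(y,s)\le\|h\|_{L^\infty}/\phi(y,s)$, and for $0<s<1$ the almost decreasingness of $\phi$ together with \eqref{NC} gives a uniform positive lower bound on $\phi(y,s)$ over the relevant $y$; for $s\ge1$ the $\cGdec_p$ condition handles it as you intended. One then uses separability of $\{h\in\Cic(\R^d):\supp h\subset B(0,N)\}$ in the sup-norm (polynomials or trigonometric polynomials with rational coefficients times a fixed cutoff, which is exactly the paper's countable family $\cP_\Q$). With that single substitution your scheme matches the paper's proof.
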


\begin{proof}
For $M>1$, let $C_M=\max(C_M^{1},C_M^{2})$ and
\begin{align}
 C_M^{1}
 &=\sup\left\{\frac{1}{\phi(x,r)}:x\in B(0,3M),\,0<r<1\right\},
                                                                                \label{0<r<1}\\
 C_M^{2}
 &=\sup\left\{\frac{(2M)^{d/p}}{r^{d/p}\phi(x,r)}:B(0,2M)\cap B(x,r)\ne\emptyset,\,r\ge1 \right\}.
                                                                                \label{r ge 1}
\end{align}
Then $C_M<\infty$, since $\phi$ is in $\cGdec_p$ and satisfies \eqref{NC}.

Now, let $h\in\Cic(\R^d)$ satisfy $\chi_{B(0,1)}\le h\le\chi_{B(0,2)}$
and let $h_R(x)=h(x/R)$, $R\in\N$.
We define 
\begin{align*}
  \cP_{\Q}
&=\bigcup_{R=1}^{\infty} \cP_{\Q,R}, \\
  \cP_{\Q,R}
&=\{p\,h_R : \text{$p$ is a trigonometric polynomial with rational coefficients}\}.
\end{align*}
It is enough to prove that, 
for all $f\in\Cic(\R^d)$ and $\epsilon>0$, 
there exists $g\in\cP_{\Q}$ 
such that $\|f-g\|_{L_{p,\phi}}<\epsilon$.
Let $\supp f \subset B(0,M)$. 
Then we can choose $g\in\cP_{\Q}$ 
such that $\supp g \subset B(0,2M)$ 
and that $\|f-g\|_{L^\infty}<\epsilon/C_M$. 
We show that, for any ball $B=B(x,r)$, 
\begin{equation*}
 \frac{\M_p(f-g,B)}{\phi(B)}<\epsilon.
\end{equation*}
If $0<r<1$, 
we may assume that $x\in B(0,3M)$. 
Otherwise, $\supp(f-g)\cap B(x,r)=\emptyset$. 
Thus
\begin{equation*}
 \frac {\M_p(f-g,B)}{\phi(B)} 
 \le \frac {\|f-g\|_{L^\infty}}{\phi(x,r)} 
 \le \epsilon.
\end{equation*}
If $r\ge1$,  
we may assume that $B(0,2M)\cap B(x,r)\ne\emptyset$. 
Thus
\begin{equation*}
 \frac {\M_p(f-g,B)}{\phi(B)} 
 \le
 \frac {\|f-g\|_{L^\infty}|B(0,2M)|^{1/p}}{\phi(B)|B|^{1/p}} 
 =
 \frac {\|f-g\|_{L^\infty}(2M)^{d/p}}{r^{d/p}\phi(x,r)} 
 \le \epsilon. 
\end{equation*}
Therefore, we get the conclusion. 
\end{proof}

\begin{prop}\label{prop:sup in CgC}
Let $p,q\in(1,\infty)$ and $1/p+1/q=1$, 
and let $\phi$ be in $\cGdec_p$ and satisfy \eqref{NC}.
Then, for every $f\in B^{[\phi,q]}(\R^d)$, 
\begin{equation}\label{sup in CgC}
 \|f\|_{B^{[\phi,q]}}
 \sim
 \sup\big\{\big|\ell_g(f)\big|:g\in\overline{\Cic(\R^d)}^{L_{p,\phi}(\R^d)}, 
 \ \|g\|_{L_{p,\phi}}\le1 \big\}, 
\end{equation}
where $\ell_g \in (B^{[\phi,q]}(\R^d))^*$ 
is the defined by Theorem~\ref{thm:duality}.
\end{prop}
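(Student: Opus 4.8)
The inequality $\gtrsim$ is essentially automatic: if $f = \sum_j \lambda_j b_j$ is any block decomposition with $\sum_j \lambda_j < \infty$, then for $g \in \overline{\Cic(\R^d)}^{L_{p,\phi}(\R^d)}$ with $\|g\|_{L_{p,\phi}} \le 1$ the computation displayed just before Definition~\ref{defn:B^Phi} gives $|\int b_j g| \le \|g\|_{L_{q,\phi}} \le 1$ for each block (after noting $q' = p$ in that notation), hence $|\ell_g(f)| \le \sum_j \lambda_j$; taking the infimum over decompositions yields $\sup\{|\ell_g(f)|\} \le \|f\|_{B^{[\phi,q]}}$. The only mild care needed is that $\ell_g(f)$ is well defined: since $g \in \overline{\Cic(\R^d)}^{L_{p,\phi}}$ it is approximable in $L_{p,\phi}$-norm, and Proposition~\ref{prop:L 1 loc} tells us $f \in L^1_{\loc}$, so $\ell_g$ extends continuously from $\Cic$.

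The substantive direction is $\|f\|_{B^{[\phi,q]}} \lesssim \sup\{|\ell_g(f)| : g \in \overline{\Cic(\R^d)}^{L_{p,\phi}}, \|g\|_{L_{p,\phi}} \le 1\}$. The natural route is a Hahn--Banach / bipolar argument. By Theorem~\ref{thm:duality} we already know $\|f\|_{B^{[\phi,q]}} \sim \|\ell_f\|_{(L_{p,\phi})^*}$, i.e. $\|f\|_{B^{[\phi,q]}} \sim \sup\{|\int fg| : g \in L^q_\comp, \|g\|_{L_{p,\phi}} \le 1\}$; the issue is to replace the test class $\{g \in L_{p,\phi} : \|g\|_{L_{p,\phi}} \le 1\}$ by the smaller class $\{g \in \overline{\Cic(\R^d)}^{L_{p,\phi}} : \|g\|_{L_{p,\phi}} \le 1\}$ without losing more than a constant. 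I would argue: first reduce to test functions $g \in \Cic(\R^d)$ with $\|g\|_{L_{p,\phi}} \le 1$, since such $g$ lie in $\overline{\Cic(\R^d)}^{L_{p,\phi}}$ and approximate any $g \in L^q_\comp$ of comparable norm --- here one uses the mollifier $\eta_t * g$ of Section~\ref{s:mollifier}, which by Theorem~\ref{thm:mollifier} satisfies $\|\eta_t * g\|_{L_{p,\phi}} \lesssim \|g\|_{L_{p,\phi}}$ uniformly in $t$ and $\eta_t * g \to g$ in $L^p$ hence (with supports controlled) in $L_{p,\phi}$ as $t \to 0$; rescaling by a fixed constant absorbs the loss. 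Thus $\sup$ over $g \in \Cic$ with $\|g\|_{L_{p,\phi}} \le 1$ already dominates a fixed multiple of $\|f\|_{B^{[\phi,q]}}$, and since $\Cic \subset \overline{\Cic}^{L_{p,\phi}}$ this finishes the proof.

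The main obstacle I anticipate is the approximation step: showing that a given $g \in L^q_\comp(\R^d)$ (or even $g \in L_{p,\phi}$ with small $A_{p,\phi}(g)$) can be approximated, in $L_{p,\phi}$-norm and up to a bounded factor, by elements of $\Cic(\R^d)$. Plain $L^p$-convergence of mollifiers is not enough because the $L_{p,\phi}$ norm sees all scales and all centers; this is exactly where one needs Lemmas~\ref{lem:(ef)B r<t} and \ref{lem:(ef)B t<r} together with the compact support of $g$ (to handle large balls and far-away balls as in the proof of the distance estimate in Section~\ref{s:distance}) and the $\cGdec_p$/\eqref{NC} structure of $\phi$ (to handle small balls, via \eqref{lim1}-type positivity of $\phi$). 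In fact, rather than re-deriving this, I would invoke the already-proved half $d(g, \Cic(\R^d)) \le C A_{p,\phi}(g)$ of Theorem~\ref{thm:distance}: for $g \in L^q_\comp$ one checks $A_{p,\phi}(g) = 0$ by the same three-limit computation carried out in Subsection~3.2 for $\Cic$ functions (it only uses $g \in L^p$ with compact support, not smoothness), so $g \in \overline{\Cic(\R^d)}^{L_{p,\phi}}$ directly and no rescaling loss occurs at all; then $\sup$ over $g \in L^q_\comp$ with $\|g\|_{L_{p,\phi}} \le 1$ is literally $\le \sup$ over $g \in \overline{\Cic}^{L_{p,\phi}}$ with the same norm bound, and Theorem~\ref{thm:duality} closes the argument.
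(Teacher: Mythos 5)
Your "easy" direction and the overall framing (reduce via Theorem~\ref{thm:duality}, approximate a near-extremal $g$ by something in $\overline{\Cic}^{L_{p,\phi}}$) are sound, and you correctly flag the central obstacle: mollification does not converge in $L_{p,\phi}$-norm. But both of your proposed resolutions for the hard direction have real gaps. Route~1 rests on the assertion that $\eta_t*g\to g$ in $L_{p,\phi}$ once supports are controlled; this is false in general --- it is precisely the failure of this convergence that makes $\Cic$ non-dense in $L_{p,\phi}$ (Zorko's example), so supports do not help. Route~2 tries to show $L^q_{\comp}\subset\overline{\Cic}^{L_{p,\phi}}$ by checking $A_{p,\phi}(g)=0$ for $g\in L^q_{\comp}$. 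This cannot work as stated: first, Proposition~\ref{prop:sup in CgC} assumes only $\phi\in\cGdec_p$ and \eqref{NC}, \emph{not} \eqref{lim1} and \eqref{lim2}, which are needed even to make $A_{p,\phi}$ detect membership in the closure; and without them the inclusion $L^q_{\comp}\subset\overline{\Cic}^{L_{p,\phi}}$ is simply false (take $\phi\equiv1$, so $L_{p,\phi}=L^{\infty}$ and a characteristic function in $L^q_{\comp}$ is not in $\overline{\Cic}^{L^{\infty}}$). Second, even granting \eqref{lim1}, \eqref{lim2}, the Subsection~3.2 computation of $A_{p,\phi}=0$ uses $\|f\|_{L^{\infty}}$, and a generic $g\in L^q_{\comp}$ need not be bounded, so "it only uses $g\in L^p$ with compact support" is not accurate; for $r\to+0$ the bound $\M_p(g,B(x,r))\lesssim r^{-d/q}\|g\|_{L^q}$ need not be killed by $\phi(x,r)$.

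The paper's proof sidesteps the density issue entirely and this is the idea you were missing: one does not need $(g\chi_B)*\eta_t$ to be close to $g$ in $L_{p,\phi}$. Starting from a near-extremal $g\in L_{p,\phi}$ with $\|g\|_{L_{p,\phi}}\le1$, one picks $B\supset\supp f$, so $\ell_g(f)=\ell_{g\chi_B}(f)$ and $\|g\chi_B\|_{L_{p,\phi}}\le1$. Then $(g\chi_B)*\eta_t\in\Cic(\R^d)$, and Theorem~\ref{thm:mollifier} gives the \emph{uniform} bound $\|(g\chi_B)*\eta_t\|_{L_{p,\phi}}\le C$. Crucially, because $f\in L^q$ with compact support and $g\chi_B\in L^p$, it suffices that $(g\chi_B)*\eta_t\to g\chi_B$ in $L^p$ to conclude $\ell_{(g\chi_B)*\eta_t}(f)\to\ell_{g\chi_B}(f)$. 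Thus the mollified function is a legitimate competitor in $\overline{N}(f)$ with controlled norm and nearly the same pairing value, giving $N(f)\le C\overline{N}(f)$. No density of $\Cic$ in $L_{p,\phi}$, no \eqref{lim1}/\eqref{lim2}, and no claim that $L^q_{\comp}\subset\overline{\Cic}^{L_{p,\phi}}$ are needed.
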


\begin{proof} 
We may assume that $f\in L^q_\comp(\R^d)$, 
since $L^q_\comp(\R^d)$ is dense in $B^{[\phi,q]}(\R^d)$.
Then Theorem~\ref{thm:duality} shows that
\begin{align*}
 \|f\|_{B^{[\phi,q]}} 
 &=
 \sup\big\{\big|\ell(f)\big|:\ell\in(B^{[\phi,q]}(\R^d))^*, \|\ell\|_{(B^{[\phi,q]}(\R^d))^*}\le1 \big\} \\
 &\sim
 N(f)\equiv
 \sup\big\{\big|\ell_g(f)\big|:g\in L_{p,\phi}(\R^d), \|g\|_{L_{p,\phi}}\le1 \big\} \\
 &\ge
 \overline{N}(f)\equiv
 \sup\big\{\big|\ell_g(f)\big|:g\in\overline{\Cic(\R^d)}^{L_{p,\phi}(\R^d)}, 
           \ \|g\|_{L_{p,\phi}}\le1 \big\}.
\end{align*}
Next we show that $N(f)\ls\overline{N}(f)$.
For any $\epsilon>0$ there exists $g\in L_{p,\phi}(\R^d)$ 
such that $\|g\|_{L_{p,\phi}}\le1$ and $N(f)<|\ell_g(f)|+\epsilon$. 
We can choose a ball $B$ such that $\supp f\subset B$.
Then $\|g\chi_B\|_{L_{p,\phi}}\le 1$.
Let $\eta_t$ be as in Theorem~\ref{thm:mollifier}.
Then $(g\chi_B)*\eta_t\in\Cic(\R^d)$ 
and $\|(g\chi_B)*\eta_t\|_{L_{p,\phi}}\le C$ 
for some $C>0$ 
which is determined by only the constant
in Theorem~\ref{thm:mollifier}.
Moreover,
\begin{align*}
 |\ell_g(f)-\ell_{(g\chi_B)*\eta_t}(f)|
 &=
 |\ell_{g\chi_B}(f)-\ell_{(g\chi_B)*\eta_t}(f)| \\
 &=
 \Big|\int_{\R^d}(g\chi_B-(g\chi_B)*\eta_t)f \Big|\\
 &\le
 \|g\chi_B-(g\chi_B)*\eta_t\|_{L^p}\|f\|_{L^q}\to0
 \quad \text{as} \quad t\to+0.
\end{align*}
Thus, we can take $t>0$ such that $|\ell_g(f)-\ell_{(g\chi_B)*\eta_t}(f)|<\epsilon$.
Then
\begin{align*}
 |\ell_g(f)|
 &\le
 |\ell_{(g\chi_B)*\eta_t}(f)| + \epsilon \\
 &\le
 \sup\big\{\big|\ell_g(f)\big|:g\in\overline{\Cic(\R^d)}^{L_{p,\phi}(\R^d)}, 
 \ \|g\|_{L_{p,\phi}}\le C \big\}
 + \epsilon \\
 &=
 C\overline{N}(f)+\epsilon.
\end{align*}
Therefore,
\begin{equation*}
 N(f)\le C\overline{N}(f) + 2\epsilon. 
\end{equation*}
Since $\epsilon>0$ was arbitrary, the conclusion follows. 
\end{proof}

The following lemma can be proved by the diagonalization argument. 

\begin{lem}[{\cite[Lemma~4.3]{Coifman-Weiss1977}}]\label{lem:series}
Suppose $\lambda_j^k>0$, $j,k=1,2,\dots$, 
satisfies $\sum_{j=1}^{\infty}\lambda_j^k\le1$ for each $k=1,2,\dots$,
then there exists an increasing sequence of natural numbers 
$k_1<k_2<\dots<k_n<\cdots$ 
such that $\lim_{n\to\infty}\lambda_j^{k_n}=\lambda_j$ for each $j$
and that $\sum_{j=1}^{\infty}\lambda_j\le1$.
\end{lem}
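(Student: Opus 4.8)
The plan is to combine a Cantor diagonal extraction with a Fatou-type passage to the limit in the series. First I note that for every fixed $j$ the real sequence $\{\lambda_j^k\}_{k\ge1}$ is contained in $[0,1]$, since $0<\lambda_j^k\le\sum_{i=1}^{\infty}\lambda_i^k\le1$; in particular it is bounded, so it admits convergent subsequences.

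Next I construct the diagonal sequence. Starting from the whole index set, use Bolzano--Weierstrass to select a subsequence $S_1=\{k_m^{(1)}\}_m$ along which $\lambda_1^k$ converges to some $\lambda_1\in[0,1]$. Inductively, given $S_{j-1}$, select a subsequence $S_j\subset S_{j-1}$ along which $\lambda_j^k$ converges to some $\lambda_j\in[0,1]$. Let $k_n$ be the $n$-th term of $S_n$. Since each $S_j$ is a subsequence of $S_{j-1}$ and we always take the $n$-th term, the sequence $\{k_n\}_n$ is strictly increasing; moreover, for every fixed $j$, the tail $\{k_n\}_{n\ge j}$ is a subsequence of $S_j$, hence $\lim_{n\to\infty}\lambda_j^{k_n}=\lambda_j$ for each $j$.

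Finally I estimate the sum of the limits. For any finite $N$,
\[
 \sum_{j=1}^{N}\lambda_j
 =\sum_{j=1}^{N}\lim_{n\to\infty}\lambda_j^{k_n}
 =\lim_{n\to\infty}\sum_{j=1}^{N}\lambda_j^{k_n}
 \le\limsup_{n\to\infty}\sum_{j=1}^{\infty}\lambda_j^{k_n}
 \le1,
\]
where the second equality uses that a sum of finitely many convergent sequences converges to the sum of the limits. Letting $N\to\infty$ yields $\sum_{j=1}^{\infty}\lambda_j\le1$, which together with the previous paragraph completes the proof. (Equivalently, the last inequality is Fatou's lemma applied with respect to the counting measure on $\N$.)

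There is essentially no serious obstacle here; the only points that require a bit of care are that the diagonal index sequence $\{k_n\}$ be genuinely strictly increasing and that, for each $j$, all but finitely many $k_n$ lie in the $j$-th nested subsequence $S_j$ so that the convergence $\lambda_j^{k_n}\to\lambda_j$ is not destroyed. Both are automatic consequences of the standard diagonal construction described above.
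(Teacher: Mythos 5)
Your proof is correct, and it follows exactly the route the paper indicates: the paper gives no proof of this lemma, merely citing Coifman--Weiss and remarking that it ``can be proved by the diagonalization argument,'' which is precisely your Cantor diagonal extraction combined with the Fatou-type passage to the limit over finite partial sums.
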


For $n\in\Z$, let $\cB^n=\{B(3^nm,3^n\sqrt{d}):m\in\Z^d\}$ 
and enumerate the elements in $\cB^n$ by $\{B_i^n\}_{i=1}^\infty$. 
Then $\R^d$ can be covered with balls 
$B_1^n, B_2^n,\dots, B_i^n,\dots$ 
of radius $3^n\sqrt{d}$. 
Let $S_i^n=3B_i^{n-1}$.
Then, for each $n$,
$S_i^n \, (i=1,2,\dots)$ are balls of radius $3^n\sqrt{d}$
such that $\R^d=\bigcup_{i=1}^{\infty}S_i^n$ 
and that no point in $\R^d$ belongs to more than $N_d$ of these balls
\rm{(}$N_d$ depends only on $d$\rm{)}. 

In the above setting, we have the following lemma
in the same way as in \cite{Yamaguchi-Nakai-Shimomura2023AMS}. 

\begin{lem}[{cf. \cite[Lemma~5.8]{Yamaguchi-Nakai-Shimomura2023AMS}}]\label{lem:balls}
Let $q\in(1,\infty]$.
If $\phi$ satisfies the conditions \eqref{DC} and \eqref{NC},
then each $f\in B^{[\phi,q]}(\R^d)$ has the following representation:
\begin{equation*}
 f=\sum_{i=1}^{\infty}\sum_{n=-\infty}^{\infty}\lambda_i^nb_i^n,
\end{equation*}
where $b_i^n$ is a $[\phi,q]$-block supported in $S_i^n$,
$\lambda_i^n\ge0$ 
and 
$\sum_{i=1}^{\infty}\sum_{n=-\infty}^{\infty}\lambda_i^n
\le c\|f\|_{B^{[\phi,q]}}$.
\end{lem}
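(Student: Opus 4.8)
The plan is to start from the definition: given $f\in B^{[\phi,q]}(\R^d)$, we have an expression $f=\sum_j\lambda_j b_j$ with $b_j\in B[\phi,q]$ supported in balls $B_j$ and $\sum_j\lambda_j\le 2\|f\|_{B^{[\phi,q]}}$. The goal is to regroup the blocks according to the fixed family of balls $\{S_i^n\}$ so that each new block is supported in a single $S_i^n$. First I would observe, using the covering property of $\{S_i^n\}_{i}$ at scale $n$, that every ball $B_j$ with radius comparable to $3^n\sqrt d$ is contained in some $S_i^n$ (choosing $n$ so that $3^{n-1}\sqrt d<r_j\le 3^n\sqrt d$, say, one can fit $B_j$ into one of the enlarged balls $S_i^n=3B_i^{n-1}$, since the radius of $S_i^n$ is $3^n\sqrt d\ge 3r_j$ and the centers $3^{n-1}m$ form a net fine enough to catch the center of $B_j$). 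This assigns to each $j$ a pair $(i(j),n(j))$.

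Next I would define, for each $(i,n)$, the function $\beta_i^n=\sum_{j:\,i(j)=i,\,n(j)=n}\lambda_j b_j$, supported in $S_i^n$, and set $\Lambda_i^n=\sum_{j:\,(i(j),n(j))=(i,n)}\lambda_j$. Then $\sum_{i,n}\Lambda_i^n=\sum_j\lambda_j<\infty$ and $f=\sum_{i,n}\beta_i^n$ with convergence a.e.\ by Proposition~\ref{prop:L 1 loc}. The point is to show $\beta_i^n/\Lambda_i^n$ is (a constant multiple of) a $[\phi,q]$-block supported in $S_i^n$. For this I would estimate $\|\beta_i^n\|_{L^q}\le\sum_{j:(i(j),n(j))=(i,n)}\lambda_j\|b_j\|_{L^q}\le\sum_{j}\lambda_j\,\frac1{|B_j|^{1/q'}\phi(B_j)}$, and then compare $|B_j|^{1/q'}\phi(B_j)$ with $|S_i^n|^{1/q'}\phi(S_i^n)$. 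Since $B_j\subset S_i^n$ and the radii of $B_j$ and $S_i^n$ are comparable (both of order $3^n$, within a fixed factor), the conditions \eqref{DC} and \eqref{NC} on $\phi$ give $\phi(B_j)\sim\phi(S_i^n)$ and $|B_j|\sim|S_i^n|$, hence $|B_j|^{1/q'}\phi(B_j)\gs|S_i^n|^{1/q'}\phi(S_i^n)$ with a constant depending only on $d$ and $\phi$. Therefore $\|\beta_i^n\|_{L^q}\le C\Lambda_i^n/(|S_i^n|^{1/q'}\phi(S_i^n))$, so $b_i^n:=\beta_i^n/(C\Lambda_i^n)$ is a $[\phi,q]$-block supported in $S_i^n$, and with $\lambda_i^n:=C\Lambda_i^n$ we get $f=\sum_{i,n}\lambda_i^n b_i^n$ and $\sum_{i,n}\lambda_i^n=C\sum_j\lambda_j\le 2C\|f\|_{B^{[\phi,q]}}$, which is the claimed representation with $c=2C$.

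The main obstacle I expect is the bookkeeping in the radius-comparison step: one must choose the assignment $r_j\mapsto n(j)$ and the geometric inclusion $B_j\subset S_i^{n(j)}$ carefully so that the ratio of radii stays in a fixed compact subinterval of $(0,\infty)$, because \eqref{DC} only controls $\phi$ across dyadic (or here, triadic) scale changes by a bounded factor, and \eqref{NC} only controls $\phi$ across center changes by distances at most the radius. A secondary subtlety is justifying the a.e.\ convergence and the rearrangement of the double series $\sum_{i,n}$: this follows because all $\lambda_j\ge0$ and $b_j\ge 0$ may be assumed nonnegative after splitting into positive and negative parts (or, more cleanly, because the partial sums converge absolutely in $L^1_{\loc}$ by the estimate in Proposition~\ref{prop:L 1 loc}), so Tonelli/Fubini permits the regrouping. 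Once these two points are handled the rest is a direct substitution.
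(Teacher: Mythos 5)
Your regrouping argument is the standard and, I believe, the intended one: start from an expansion $f=\sum_j\lambda_jb_j$ near the infimum, assign each supporting ball $B_j$ to one of the fixed balls $S_i^{n}$ at the appropriate scale, collect the blocks landing in the same $S_i^n$, and renormalize using \eqref{DC} and \eqref{NC} together with the comparability of radii. The two-sided comparability $|B_j|^{1/q'}\phi(B_j)\sim|S_i^n|^{1/q'}\phi(S_i^n)$ is exactly the point where the hypotheses on $\phi$ are used, and your accounting of the coefficients and of the rearrangement of the absolutely convergent series is correct. This matches the route the paper takes implicitly by citing \cite[Lemma~5.8]{Yamaguchi-Nakai-Shimomura2023AMS}.

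One small point you already half-flag: with your stated choice $3^{n-1}\sqrt d<r_j\le3^n\sqrt d$ the inclusion $B_j\subset S_i^n$ does \emph{not} hold, since $S_i^n$ has radius $3^n\sqrt d$ and center on the grid $3^{n-1}\Z^d$, so the worst-case distance from a point of $B_j$ to the nearest grid point can reach $r_j+\tfrac12\,3^{n-1}\sqrt d\le\tfrac72\,3^{n-1}\sqrt d>3^n\sqrt d$; the parenthetical claim $3^n\sqrt d\ge3r_j$ is false in this range. Shifting the assignment by one level (use $S_i^{n+1}$, whose centers lie on $3^n\Z^d$ and whose radius is $3^{n+1}\sqrt d$) gives $r_j+\tfrac12\,3^n\sqrt d\le\tfrac32\,3^n\sqrt d<3^{n+1}\sqrt d$, and the radius ratio then stays in a fixed compact subinterval of $(0,\infty)$, so \eqref{DC} still applies with a uniform constant. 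This is the bookkeeping adjustment you anticipated; it does not affect the structure of the proof.
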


The following lemma can be proved by the same way as in \cite{Yamaguchi-Nakai-Shimomura2023AMS}.
However, we give its proof here since it is a key lemma to prove  Theorem~\ref{thm:dual}.
On the completion of the proof of Theorem~\ref{thm:dual},
from the following lemma 
it turns out that the unit ball of $B^{[\phi,q]}(\R^d)$ is weak* compact.

\begin{lem}\label{lem:subseq}
Let $p\in[1,\infty)$, $q\in(1,\infty]$ and $1/p+1/q=1$, 
and let $\phi$ be in $\cGdec_p$ and satisfy \eqref{NC}, \eqref{lim1} and \eqref{lim2}.
If $\|f_k\|_{B^{[\phi,q]}}\le1$, $k=1,2,\dots$, 
then there exist $f\in B^{[\phi,q]}(\R^d)$
and a subsequence $\{f_{k_j}\}$ such that
\begin{equation}\label{fv}
 \lim_{j\to\infty}\int_{\R^d}f_{k_j}(x)v(x)\,dx
 =\int_{\R^d}f(x)v(x)\,dx
\end{equation}
for all $v\in \Cic(\R^d)$.
\end{lem}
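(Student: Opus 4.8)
The plan is to extract a weak* convergent subsequence using the separability of the predual together with the sequential Banach--Alaoglu theorem, and then to identify the limit functional with a block-space element via the known duality. First I would invoke Proposition~\ref{prop:separable}, which tells us that $X:=\overline{\Cic(\R^d)}^{L_{p,\phi}(\R^d)}$ is a separable normed space. For each $k$, the function $f_k\in B^{[\phi,q]}(\R^d)$ acts on $X$ by $g\mapsto\ell_g(f_k)$ (using the pairing supplied by Theorem~\ref{thm:duality}), and by Proposition~\ref{prop:sup in CgC} the operator norm of this functional on the unit ball of $X$ is comparable to $\|f_k\|_{B^{[\phi,q]}}\le1$; hence $\{f_k\}$, viewed inside $X^*$, is a bounded sequence. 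Theorem~\ref{thm:seq Banach-Alaoglu} then yields a subsequence $\{f_{k_j}\}$ and a functional $\Lambda\in X^*$ with $\ell_g(f_{k_j})\to\Lambda(g)$ for every $g\in X$; in particular, testing against $g=v$ for $v\in\Cic(\R^d)\subset X$ gives $\int f_{k_j}v\to\Lambda(v)$.

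The remaining task is to produce $f\in B^{[\phi,q]}(\R^d)$ representing $\Lambda$ on $\Cic(\R^d)$, i.e. with $\Lambda(v)=\int fv$. For this I would apply the converse direction of Theorem~\ref{thm:dual} (the dual-space identification), which is being proved in the same section: a bounded linear functional on $X$ has the form $\langle f,\cdot\rangle$ for some $f\in B^{[\phi,q]}(\R^d)$ with $\|f\|_{B^{[\phi,q]}}$ comparable to the functional norm. Since that converse direction is itself what the surrounding argument is building toward, the cleaner self-contained route—and the one matching \cite{Yamaguchi-Nakai-Shimomura2023AMS}—is to construct $f$ directly from block decompositions. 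Using Lemma~\ref{lem:balls}, write each $f_k=\sum_{i,n}\lambda_{i,k}^n b_{i,k}^n$ with $b_{i,k}^n$ a $[\phi,q]$-block supported in $S_i^n$ and $\sum_{i,n}\lambda_{i,k}^n\le c$. Normalizing, apply Lemma~\ref{lem:series} (the diagonalization lemma) to pass to a further subsequence along which $\lambda_{i,k_j}^n\to\lambda_i^n$ for every pair $(i,n)$, with $\sum_{i,n}\lambda_i^n\le c$. For the block parts, on each fixed ball $S_i^n$ the blocks $b_{i,k_j}^n$ lie in a bounded subset of $L^q(S_i^n)$, which for $q\in(1,\infty)$ is reflexive (and for $q=\infty$ one uses weak* compactness of $L^\infty$); a second diagonalization over the countable index set $\{(i,n)\}$ extracts a subsequence so that $b_{i,k_j}^n\rightharpoonup b_i^n$ weakly in $L^q(S_i^n)$, and one checks that each $b_i^n$ is again a $[\phi,q]$-block supported in $S_i^n$ (the support and size conditions pass to weak limits). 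Set $f=\sum_{i,n}\lambda_i^n b_i^n$, which by Proposition~\ref{prop:L 1 loc} converges in $L^1_{\loc}$ and defines an element of $B^{[\phi,q]}(\R^d)$ with $\|f\|_{B^{[\phi,q]}}\le c$.

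Finally I would verify \eqref{fv}: for $v\in\Cic(\R^d)$, say $\supp v\subset B$, only finitely many balls $S_i^n$ with $n$ bounded above meet $B$, but infinitely many with $n\to-\infty$ do, so the convergence $\int f_{k_j}v\to\int fv$ is not an immediate consequence of termwise convergence and needs a uniform tail estimate. The key bound is that for blocks supported in $S_i^n$ one has $\big|\int b_i^n v\big|\le\|b_i^n\|_{L^q}\|v\chi_{S_i^n}\|_{L^{q'}}\lesssim |S_i^n|^{1/p}\|v\|_{L^\infty}/(|S_i^n|^{1/q'}\phi(S_i^n))$, and since $\phi\in\cGdec_p$ forces $r^{d/p}\phi(x,r)\to0$ as $r\to0$ uniformly on compacta (this is where a quantitative form of \eqref{lim1}, or rather its local consequence, enters), the contributions from small $n$ are summably small, uniformly in $k$. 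Combining the termwise weak convergence of $\lambda_{i,k_j}^n b_{i,k_j}^n$ on the finitely many relevant "large-scale" balls with this uniform smallness of the "small-scale" tail, and then letting the truncation parameter go to its limit, yields \eqref{fv} and hence $f$ represents $\Lambda$ on $\Cic(\R^d)$. I expect the main obstacle to be exactly this last interchange of limits: making the tail estimate over the scales $n\to-\infty$ genuinely uniform in $k$, for which one must use the structural properties of $\phi$ (membership in $\cGdec_p$ and \eqref{lim1}) rather than just boundedness of the block coefficients.
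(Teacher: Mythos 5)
Your overall strategy is the same as the paper's: decompose each $f_k$ via Lemma~\ref{lem:balls}, use Lemma~\ref{lem:series} to stabilize the coefficients, use Proposition~\ref{prop:separable} together with Theorem~\ref{thm:seq Banach-Alaoglu} (and a diagonal argument over $(i,n)$) to extract weak* limits of the blocks on each $S_i^n$, and then verify the pairing identity by splitting the block sum into a finite middle range and infinite tails. You also correctly flag that routing through the converse direction of Theorem~\ref{thm:dual} would be circular. However, the final verification has two genuine gaps.

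First, you only treat the small-scale tail ($n\to-\infty$) and claim the remaining terms are ``only finitely many.'' That is not the case: for every sufficiently large $n$ at least one $S_i^n$ contains $\supp v$, so there are infinitely many pairs $(i,n)$ with $n\to+\infty$ that contribute. The paper handles this large-scale tail separately, bounding $\big|\int b_i^{n,k}v\big|\le \|b_i^{n,k}\|_{L^q}\|v\|_{L^p}\le \|v\|_{L^p}/(\phi(S_i^n)|S_i^n|^{1/p})$, comparing $\phi(S_i^n)|S_i^n|^{1/p}$ to $\phi(S_1^N)|S_1^N|^{1/p}$ via almost increasingness of $r\mapsto r^{d/p}\phi(x,r)$ and \eqref{NC} (since $S_i^n\cap S_1^N\ne\emptyset$), and then invoking \eqref{lim2} to make this uniformly small. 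That estimate does not appear in your proposal, and without it the interchange of limits is not justified.

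Second, in the small-scale tail the bound you write, $|S_i^n|^{1/p}\|v\|_{L^\infty}/(|S_i^n|^{1/q'}\phi(S_i^n))$, simplifies to $\|v\|_{L^\infty}/\phi(S_i^n)$ (since $q'=p$), so the relevant asymptotic is $1/\phi(S_i^n)\to 0$ as $n\to-\infty$, i.e.\ \eqref{lim1}. Your assertion that ``$\phi\in\cGdec_p$ forces $r^{d/p}\phi(x,r)\to 0$ as $r\to 0$'' is false (e.g.\ $\phi(x,r)=r^{-d/p}$ lies in $\cGdec_p$ and makes $r^{d/p}\phi\equiv 1$); membership in $\cGdec_p$ gives almost-monotonicity, not a limit. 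You do gesture toward \eqref{lim1} parenthetically, but the cited mechanism is wrong and should be replaced by a direct appeal to \eqref{lim1}, together with the summability $\sum_{i,n}\lambda_i^{n,k}\le c$ uniform in $k$, to close the estimate.
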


\begin{proof}
We write 
$f_k=\sum_{i,n}\lambda_i^{n,k}b_i^{n,k}$
as in Lemma~\ref{lem:balls}
with $\sum_{i,n}\lambda_i^{n,k}\le c$. 
We can suppose that the coefficients $\lambda_j^{n,k}$ are nonnegative
and, applying Lemma~\ref{lem:series}, 
we can assume $\lim_{k\to\infty}\lambda_i^{n,k}=\lambda_i^n$ 
exists for each $(i,n)$ and $\sum_{i,n}\lambda_i^n\le c$.
For each fixed $(i,n)$ the blocks $b_i^{n,k}$ are supported by
the $S_i^n$ for all $k$ and are uniformly bounded. 
Thus, $\{b_i^{n,k}\}_k$ is a bounded sequence in $L^q(S_i^n)$. 
By Proposition~\ref{prop:separable} and Theorem~\ref{thm:seq Banach-Alaoglu},
there exists a subsequence of $b_i^{n,1},b_i^{n,2},\dots,b_i^{n,k},\dots$
converging to a function $b_i^n$ in the weak* topology, 
that is
\begin{equation*}
 \int_{S_i^n} b_i^{n,k_j}(x)\vp(x)\,dx
 \to
 \int_{S_i^n} b_i^{n}(x)\vp(x)\,dx
\end{equation*}
for all $\vp\in L^p(S_i^n)$.
Applying, again, a diagonalization argument 
we can assume
$b_i^{n,k}\to b_i^n$ as $k\to\infty$ for all $(i,n)$.
Hence, we obtain $\supp b_i^n \subset S_i^n$ 
and $\|b_i^n\|_{L^q} \le 1/\{\phi(S_i^n)|S_i^n|^{1/p}\}$. 
Therefore, $b_i^n$ is a $[\phi,q]$-block.
We let $f=\sum_{i,n}\lambda_i^nb_i^n \in B^{[\phi,q]}(\R^d)$. 
It remains to be shown \eqref{fv} for all $v\in \Cic(\R^d)$.
We write
\begin{equation*}
 \int_{\R^d} f_k(x)v(x)\,dx
 =\Big(\sum_{-N\le n\le N}+\sum_{n<-N}+\sum_{n>N}\Big)
 \sum_i\lambda_i^{n,k}\int_{\R^d} b_i^{n,k}(x)v(x)\,dx.
\end{equation*}
Let $S_i^n=B(x_i^n,r_i^n)$ and consider the last sum. 
Without loss of generality we can assume $x_1^n=0$ for $n\in\Z$.
Thus $\supp v \subset S_1^N$ for large $N$.
We may assume that $S_i^n \, \cap \, \supp v \neq \emptyset$ 
for some $n>N$ and $i$.
Then we can use the almost increasingness of $r\mapsto r^{d/p}\phi(x,r)$ 
and the condition \eqref{NC} on $\phi$
to get $1/\{\phi(S_i^n)|S_i^n|^{1/p}\} \le C/\{\phi(S_1^N)|S_1^N|^{1/p}\}$ 
for $n>N$,
since $S_i^n \cap S_1^N \neq \emptyset$.
Hence, the last sum is dominated by a constant multiple of
$\|v\|_{L^p}/\{\phi(S_1^N)|S_1^N|^{1/p}\}$
which is small provided $N$ is large, by \eqref{lim2}.
Next, consider the second sum.
If $N$ is large 
we have $1/\phi(S_i^n)<\epsilon/(c\|v\|_{L^\infty})$ for $n<-N$,
from \eqref{lim1}.
Then
\begin{align*}
 \Big|\int_{S_i^n} b_i^{n,k}(x)v(x)\,dx\Big| 
 &\le
 \|b_i^n\|_{L^q} \left(\int_{S_i^n}|v(x)|^p\,dx\right)^{1/p} \\
 &\le
 \frac{1}{\phi(S_i^n)|S_i^n|^{1/p}} \times \|v\|_{L^\infty} |S_i^n|^{1/p} \\
 &\le 
 \frac{\|v\|_{L^\infty}}{\phi(S_i^n)}
 <\epsilon/c.
\end{align*}
Thus, the second sum is dominated by
\begin{equation*}
 \sum_{n<-N}\sum_i\lambda_i^{n,k}\,
 \Big|\int_{S_i^n} b_i^{n,k}(x)v(x)\,dx\Big| 
 \le
 \sum_{n<-N}\sum_i\lambda_i^{n,k}\,(\epsilon/c)\le \epsilon. 
\end{equation*}
The first sum involves only a finite number of terms,
independently of $k$ 
(here we use the fact that the compact support of $v$
can only meet a finite number of these balls
since their radii range only from $3^{-N}\sqrt{d}$ to $3^N\sqrt{d}$).
Thus, the first sum tends to $\int(\sum_{-N\le n\le N}\sum_i\lambda_i^nb_i^n)v$
which, for the above reasons, is close to $\int fv$ if $N$  is large.
This establishes Lemma~\ref{lem:subseq}.
\end{proof}


Now we prove Theorem~\ref{thm:dual}.

\begin{proof}[\sc Proof of Theorem~\ref{thm:dual}]
Let $X=\overline{\Cic(\R^d)}^{L_{p,\phi}(\R^d)}$ 
and $T=B^{[\phi,q]}(\R^d)$.
Then $T$ is a subspace of $X^*$, since $B^{[\phi,q]}(\R^d)\subset \big(L_{p,\phi}(\R^d)\big)^*$.
If $v\in X$ satisfies $\langle f,v \rangle=0$ 
for all $f\in T=B^{[\phi,q]}(\R^d)$,
then $v=0$.
Thus, $T$ is total.
By Theorem~\ref{thm:total dense} it follows that
$T$ is dense in $X^*$ in the weak* topology.
Since $X$ is a separable normed vector space by Proposition~\ref{prop:separable},
any bounded closed ball in $X^*$ is metrizable, and thus
this is equivalent to the statement: 
if $x^*\in X^*$, then we can find a sequence $\{f_k\}$ in $T$ 
such that $\langle f_k,v \rangle \to \langle x^*,v \rangle$ for all $v\in X$.
That is, the sequence $\langle f_k,v \rangle$ is bounded for each $v\in X$.
By the Banach-Steinhaus theorem we can conclude that 
the sequence of norms $\|f_k\|_{X^*}$ is bounded,
which means that
the sequence of norms $\|f_k\|_{B^{[\phi,q]}}$ is bounded by Proposition~\ref{prop:sup in CgC}.
We can now invoke Lemma~\ref{lem:subseq} 
to obtain a subsequence $\{f_{k_j}\}$ 
and an $f\in B^{[\phi,q]}(\R^d)$ such that
\begin{equation*}
 \langle x^*,v \rangle 
 =
 \lim_{j\to\infty}\langle f_{k_j},v \rangle
 =
 \lim_{j\to\infty}\int_{\R^d} f_{k_j}(x) v(x)\,dx 
 =
 \int_{\R^d} f(x) v(x)\,dx 
\end{equation*}
for all $v\in\Cic(\R^d)$.
From this it follows that the linear functional $x^*$ is represented
by $f$ and we obtain the conclusion.
\end{proof}

\section{Calder\'{o}n--Zygmund operators}\label{s:CZO}

In this section we prove the boundedness of 
Calder\'{o}n--Zygmund operators on \sloppy $\overline{\Cic(\R^d)}^{L_{p,\phi}(\R^d)}$. 
We first state the definition of  Calder\'{o}n--Zygmund operators 
following \cite{Yabuta1985}. 
Let $\Omega$ be the set of all nonnegative nondecreasing functions $\omega$ 
on $(0,\infty)$ such that $\int_0^1\frac{\omega(t)}{t}\,dt<\infty$. 

\begin{defn}[standard kernel]\label{defn:SK}
Let $\omega\in\Omega$. 
A continuous function $K(x,y)$ on $\R^d\times\R^d\backslash\{(x,x)\in\R^{2d}\}$
is said to be a standard kernel of type $\omega$ 
if the following conditions are satisfied; 
\begin{gather} 
  |K(x,y)|\le \frac{C}{|x-y|^d}
     \quad\text{for}\quad x\not=y,
                                                  \label{SK1}
 \\
  \begin{split}
  |K(x,y)-K(z,y)|+|K(y,x)-K(y,z)| 
  &\le 
  \frac{C}{|x-y|^d}\,
  \omega\left(\frac{|x-z|}{|x-y|}\right) \\
  &\text{for}\quad |x-y|\ge2|x-z|. 
                                                  \label{SK2}
  \end{split}
\end{gather}
\end{defn}

\begin{defn}[Calder\'{o}n--Zygmund operator]\label{defn:CZO}
Let $\omega\in\Omega$. A linear operator $T$ from $\cS(\R^d)$ to $\cS'(\R^d)$ 
is said to be a Calder\'{o}n--Zygmund operator of type $\omega$, 
if $T$ is bounded on $L^2(\R^d)$ 
and there exists a standard kernel $K$ of type $\omega$ such that, 
for $f\in\Cic(\R^d)$, 
\begin{equation*}
  Tf(x)=\int_{\R^d} K(x,y)f(y)\,dy, \quad x\notin\supp f. 
\end{equation*}
\end{defn}

It is known by \cite[Theorem~2.4]{Yabuta1985} 
that any Calder\'{o}n--Zygmund operator of type $\omega\in\Omega$ 
is bounded on $L^p(\R^d)$ for $p\in(1,\infty)$. 
This boundedness was extended to generalized Morrey spaces $L_{p,\phi}(\R^d)$ 
with variable growth function $\phi$ as the following theorem:

\begin{thm}[{\cite{Nakai1994MathNachr}}]\label{thm:CZ1}
Let $T$ be a Calder\'{o}n--Zygmund operator of type $\omega\in\Omega$, 
$p\in(1,\infty)$ and $\phi$ satisfy \eqref{DC}.
Assume that 
there exists a positive constant $C$ such that,
for all $x\in \R^d$ and $r>0$,
\begin{equation}\label{CZ-C}
     \int_r^{\infty}\frac{\phi(x,t)}{t}\,dt 
     \le C \phi(x,r).
\end{equation}
Then 
$T$ is bounded on $L_{p,\phi}(\R^d)$.
\end{thm}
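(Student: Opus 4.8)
The plan is to prove Theorem~\ref{thm:CZ1} (the $L_{p,\phi}$-boundedness of a Calder\'on--Zygmund operator $T$ of type $\omega\in\Omega$ under the Dini-type hypothesis \eqref{CZ-C}) by the usual local decomposition argument, estimating the mean value $\M_p(Tf,B)$ over an arbitrary ball $B=B(x_0,r)$ by splitting $f$ into a local part and a tail. First I would fix $B=B(x_0,r)$ and write $f=f_1+f_2$ with $f_1=f\chi_{2B}$ and $f_2=f\chi_{\R^d\setminus 2B}$. For the local part, the $L^p(\R^d)$-boundedness of $T$ (which holds by \cite[Theorem~2.4]{Yabuta1985} since $\omega\in\Omega$) gives
\begin{equation*}
 \left(\int_B |Tf_1|^p\right)^{1/p}
 \le \|Tf_1\|_{L^p}
 \le C\|f_1\|_{L^p}
 = C\left(\int_{2B}|f|^p\right)^{1/p},
\end{equation*}
so that $\M_p(Tf_1,B)\le C\,2^{d/p}\M_p(f,2B)$, and by \eqref{DC} we get $\M_p(Tf_1,B)/\phi(B)\le C\|f\|_{L_{p,\phi}}$.

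For the tail part $f_2$, the point $y\mapsto K(x,y)$ has no singularity on $2B$ when $x\in B$, so for $x\in B$ we may write $Tf_2(x)=\int_{\R^d\setminus 2B}K(x,y)f(y)\,dy$ and use the size estimate \eqref{SK1} together with $|x-y|\sim|x_0-y|$ for $x\in B$, $y\notin 2B$. Decomposing the complement of $2B$ into dyadic annuli $2^{k+1}B\setminus 2^kB$, $k\ge1$, and applying H\"older's inequality on each annulus yields the pointwise bound
\begin{equation*}
 |Tf_2(x)|
 \le C\sum_{k=1}^{\infty}\frac{1}{(2^k r)^d}\int_{2^{k+1}B}|f(y)|\,dy
 \le C\sum_{k=1}^{\infty}\left(\frac{1}{|2^{k+1}B|}\int_{2^{k+1}B}|f|^p\right)^{1/p}
 \le C\|f\|_{L_{p,\phi}}\sum_{k=1}^{\infty}\phi(2^{k+1}B),
\end{equation*}
uniformly in $x\in B$. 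Since by \eqref{DC} one has $\sum_{k\ge1}\phi(x_0,2^{k+1}r)\le C\int_r^{\infty}\frac{\phi(x_0,t)}{t}\,dt$, the hypothesis \eqref{CZ-C} converts this into $|Tf_2(x)|\le C\|f\|_{L_{p,\phi}}\,\phi(x_0,r)$ for all $x\in B$, whence $\M_p(Tf_2,B)/\phi(B)\le C\|f\|_{L_{p,\phi}}$. Combining the two pieces and taking the supremum over all balls $B$ gives $\|Tf\|_{L_{p,\phi}}\le C\|f\|_{L_{p,\phi}}$ for $f\in\Cic(\R^d)$; a density/limiting argument then extends $T$ to all of $L_{p,\phi}(\R^d)$ if desired (though for the later applications in Section~\ref{s:CZO} only the a priori estimate on $\Cic(\R^d)$ is needed).

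The main obstacle is the tail estimate, specifically the careful passage from the annular sum $\sum_k\phi(2^{k+1}B)$ to the integral $\int_r^\infty \phi(x_0,t)\,dt/t$ controlled by \eqref{CZ-C}: one must invoke the doubling condition \eqref{DC} to compare $\phi(x_0,2^{k+1}r)$ with $\phi(x_0,t)$ for $t\in[2^kr,2^{k+1}r]$ and absorb the resulting constants, and one must be careful that this step uses only \eqref{DC} and \eqref{CZ-C}, not the stronger structure $\phi\in\cGdec_p$ or \eqref{NC} used elsewhere in the paper. The smoothness condition \eqref{SK2} on the kernel is in fact not needed for this mean-value estimate (only \eqref{SK1} and the $L^p$-boundedness enter), which is precisely the remark made in the introduction that, unlike for Campanato spaces, no cancellation or regularity of the kernel beyond size is required to bound $T$ on $L_{p,\phi}(\R^d)$.
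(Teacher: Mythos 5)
This theorem is cited from Nakai's 1994 paper \cite{Nakai1994MathNachr} and is not proved in the present paper, so there is no in-paper proof to compare against. That said, your argument is the standard (and correct) proof of this Morrey-space estimate: split $f=f\chi_{2B}+f\chi_{(2B)^c}$, control the local part via the $L^p$-boundedness of $T$ plus doubling, control the tail by the size condition \eqref{SK1}, dyadic annuli, and the conversion $\sum_{k\ge1}\phi(x_0,2^{k+1}r)\lesssim\int_r^\infty\phi(x_0,t)\,dt/t\lesssim\phi(x_0,r)$ using \eqref{DC} and \eqref{CZ-C}. This matches Remark~\ref{rem:T}, where $Tf$ on a ball $B$ is exactly defined through this local/tail decomposition, so your estimate applies directly to any $f\in L_{p,\phi}(\R^d)$.

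One misstatement worth flagging: the parenthetical ``a density/limiting argument then extends $T$ to all of $L_{p,\phi}(\R^d)$'' is not available here, since $\Cic(\R^d)$ is \emph{not} dense in $L_{p,\phi}(\R^d)$ — that non-density is the raison d'\^etre of the whole paper. This does no real harm, because your estimate already works verbatim for every $f\in L_{p,\phi}(\R^d)$ once $Tf$ is defined as in Remark~\ref{rem:T}; no density step is needed or wanted. Also, the closing claim that \eqref{SK2} ``is not needed'' should be read carefully: \eqref{SK2} is used implicitly via the $L^p$-boundedness of $T$ (Yabuta's theorem), and the paper's introductory remark you cite is about dispensing with \emph{cancellation} of the kernel (needed for Campanato spaces modulo constants), not about dispensing with kernel regularity.
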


\begin{rem}\label{rem:T}
In the above theorem, we define $Tf$ on each ball $B$ by 
\begin{equation*}
Tf(x)=T(f\chi_{2B})(x)+\int_{\R^d\backslash2B}K(x,y)f(y)\,dy,\quad x\in B. 
\end{equation*}
Then the first term in the right hand side is well defined, 
since $f\chi_{2B}\in L^p(\R^d)$, 
and the integral of the second term converges absolutely. 
Moreover, $Tf(x)$ is independent of the choice of the ball containing $x$. 
\end{rem}

In this section we have the following theorem:
\begin{thm}\label{thm:CZ2}
Let $T$ be a Calder\'{o}n--Zygmund operator of type $\omega\in\Omega$.
Let $p\in(1,\infty)$, 
and let $\phi$ be in $\cGdec_p$ 
and satisfy \eqref{NC} and \eqref{CZ-C}. 
Assume that 
\begin{equation}\label{phi x 1}
 \inf_{x\in\R^d}\phi(x,1)>0. 
\end{equation}
Then 
$T$ is bounded 
on $\overline{\Cic(\R^d)}^{L_{p,\phi}(\R^d)}$. 
\end{thm}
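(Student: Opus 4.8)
The plan is to prove the boundedness of $T$ on $X=\overline{\Cic(\R^d)}^{L_{p,\phi}(\R^d)}$ by combining Theorem~\ref{thm:CZ1} (boundedness on the full space $L_{p,\phi}(\R^d)$) with the characterization of $X$ via the functional $A_{p,\phi}$ given in Corollary~\ref{cor:closure}, together with the estimate \eqref{d le A} of Theorem~\ref{thm:distance}. Since $T$ is already bounded on $L_{p,\phi}(\R^d)$ by Theorem~\ref{thm:CZ1}, it suffices to show that $T$ maps $X$ into $X$: then $\|Tf\|_{L_{p,\phi}}\le C\|f\|_{L_{p,\phi}}$ for $f\in X$ follows for free, and the restriction of a bounded operator to an invariant closed subspace is bounded. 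By the inequality \eqref{d le A}, for $f\in X$ we have $d(Tf,\Cic(\R^d))\le C A_{p,\phi}(Tf)$; and $f\in X$ means (by Corollary~\ref{cor:closure}, whose hypotheses \eqref{lim1}, \eqref{lim2} I would need to check follow from $\cGdec_p$, \eqref{NC}, \eqref{CZ-C} and \eqref{phi x 1} — this is where \eqref{phi x 1} enters) that $A_{p,\phi}(f)=0$, i.e. all three limits in \ref{rto0}--\ref{xtoinfty} vanish. So the whole problem reduces to: \emph{if $A_{p,\phi}(f)=0$ then $A_{p,\phi}(Tf)=0$}, equivalently, $Tf$ inherits the three smallness conditions from $f$.

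To carry this out I would fix a ball $B=B(x_0,r)$ and, following Remark~\ref{rem:T}, split $f=f\chi_{2B}+f\chi_{(2B)^c}$, so that on $B$
\begin{equation*}
 Tf = T(f\chi_{2B}) + \int_{\R^d\setminus 2B}K(\cdot,y)f(y)\,dy =: g_1 + g_2.
\end{equation*}
For $g_1$ I would use the $L^p\to L^p$ boundedness of $T$: $\M_p(g_1,B)\lesssim \bigl(|2B|/|B|\bigr)^{1/p}\M_p(f,2B)\lesssim \M_p(f,2B)$, and then $\M_p(f,2B)/\phi(x_0,r)\lesssim \M_p(f,2B)/\phi(x_0,2r)$ by the doubling condition \eqref{DC} — so this term is controlled by the corresponding ``local'' quantity for $f$ at scale $2r$. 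For the tail $g_2$, the size bound \eqref{SK1} gives the standard estimate
\begin{equation*}
 |g_2(z)| \lesssim \sum_{k\ge 1}\frac{1}{|2^{k}B|}\int_{2^{k+1}B}|f(y)|\,dy
 \lesssim \sum_{k\ge 1}\M_1(f,2^{k+1}B) \le \sum_{k\ge1}\M_p(f,2^{k+1}B),
\end{equation*}
uniformly for $z\in B$; dividing by $\phi(x_0,r)$ and using that $r^{d/p}\phi(x_0,r)\lesssim (2^kr)^{d/p}\phi(x_0,2^kr)$ (almost increasingness) I would bound $\M_p(g_2,B)/\phi(x_0,r)$ by $\sum_k 2^{-kd/p}\bigl(\M_p(f,2^{k+1}B)/\phi(x_0,2^{k+1}B)\bigr)$, a summable average of the same Morrey ratios of $f$ over larger balls. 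Actually, to get decay at scales going to $0$ or $\infty$, or as $|x_0|\to\infty$, it is cleaner to invoke \eqref{CZ-C}: writing the tail as an integral $\int_r^\infty$ against $\phi(x_0,t)/t$ and comparing with $\phi(x_0,r)$ is exactly the mechanism that makes $T$ bounded on $L_{p,\phi}$, and the same computation, applied with the relevant supremum/limsup in place of the norm, transfers the smallness.

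The main obstacle is the third condition \ref{xtoinfty}: showing $\sup_{r>0}\limsup_{|x_0|\to\infty}\M_p(Tf,B(x_0,r))/\phi(x_0,r)=0$. The local term $g_1=T(f\chi_{2B})$ is fine because $\limsup_{|x_0|\to\infty}\M_p(f,B(x_0,2r))/\phi(x_0,2r)=0$ and $T$ is $L^p$-bounded. The delicate point is the tail $g_2$: part of the integral $\int_{(2B)^c}K(\cdot,y)f(y)\,dy$ comes from a region near the origin (far from $B(x_0,r)$ when $|x_0|$ is large), where $f$ is \emph{not} small, and one must use the kernel decay $|K(z,y)|\lesssim |z-y|^{-d}$ together with the fact that $f\in L_{p,\phi}$ and $\phi\in\cGdec_p$ to show that this ``far origin'' contribution, divided by $\phi(x_0,r)$, still tends to $0$ as $|x_0|\to\infty$. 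I would handle this by splitting the tail annuli $2^{k+1}B\setminus 2^kB$ into those with $2^k r \le |x_0|/2$ (where one estimates crudely using $\|f\|_{L_{p,\phi}}$ and the geometric decay $2^{-kd/p}$, summed against the growth of $\phi$ controlled by \eqref{CZ-C}, which is small when $|x_0|$ is large because the innermost relevant scale is already $\gtrsim |x_0|$) and those with $2^kr>|x_0|/2$ (where the ball $2^{k+1}B$ is comparable to a ball centered at $0$ of radius $\sim 2^kr\to\infty$, so one uses the limsup-as-$r\to\infty$ smallness of $f$). Conditions \eqref{NC} and \eqref{phi x 1} are used to compare $\phi$ at distant centers and to keep $\phi(x_0,r)$ bounded below near scale $1$. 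Once all three conditions \ref{rto0}--\ref{xtoinfty} are verified for $Tf$, Corollary~\ref{cor:closure} gives $Tf\in X$, and the proof is complete.
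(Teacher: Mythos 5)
Your reduction through Corollary~\ref{cor:closure} is where the argument breaks: that corollary needs \eqref{lim1} and \eqref{lim2}, and \eqref{lim2} does \emph{not} follow from the hypotheses of Theorem~\ref{thm:CZ2}. Take $\phi(x,r)=r^{-d/p}$, so that $L_{p,\phi}(\R^d)=L^p(\R^d)$: this $\phi$ lies in $\cGdec_p$, trivially satisfies \eqref{NC}, satisfies \eqref{CZ-C} since $\int_r^\infty t^{-d/p-1}\,dt=(p/d)r^{-d/p}$, and satisfies \eqref{phi x 1}, yet $r^{d/p}\phi(0,r)\equiv1$, so \eqref{lim2} fails. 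Indeed, for nonzero $g\in\Cic(\R^d)$ one has $\sup_x\M_p(g,B(x,r))/\phi(x,r)\to\|g\|_{L^p}/|B(0,1)|^{1/p}\ne0$ as $r\to\infty$, so $A_{p,\phi}(g)\ne0$; the equivalence $f\in\overline{\Cic(\R^d)}^{L_{p,\phi}(\R^d)}\Longleftrightarrow A_{p,\phi}(f)=0$ that you want to propagate through $T$ is simply false under the stated hypotheses. This is exactly why the paper (see Remark~\ref{rem:distance}) uses only the one-sided estimate \eqref{d le A}, which holds without \eqref{lim1} or \eqref{lim2}, and never invokes Corollary~\ref{cor:closure} in the proof of Theorem~\ref{thm:CZ2}.

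The paper's route is also technically different and avoids the annuli bookkeeping you sketch. For $f\in\Cic(\R^d)$ it brings in the auxiliary weights $\phi_1(x,r)=\phi(x,r)r^\epsilon$ and $\phi_2(x,r)=\phi(x,r)r^\epsilon/(|x|+r)^\epsilon$ from Lemma~\ref{lem:phi12} and applies Theorem~\ref{thm:CZ1} to conclude $Tf\in L_{p,\phi_1}(\R^d)\cap L_{p,\phi_2}(\R^d)$; then, choosing $g\in\Cic(\R^d)$ with $\|Tf-g\|_{L^p}<\delta$, the $r\to+0$ and $|x|\to\infty$ pieces of $A_{p,\phi}(Tf-g)$ vanish automatically because of the factors $r^\epsilon$ and $r^\epsilon/(|x|+r)^\epsilon$, while the $r\to\infty$ piece is $\le C\delta$ using only the $L^p$ smallness of $Tf-g$, \eqref{phi x 1}, and almost increasingness of $r\mapsto r^{d/p}\phi(x,r)$. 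Then \eqref{d le A} gives $d(Tf,\Cic(\R^d))\le C\delta$; let $\delta\to0$ and use density together with the $L_{p,\phi}$ boundedness from Theorem~\ref{thm:CZ1}. A further slip in your sketch: the inequality $r^{d/p}\phi(x_0,r)\lesssim(2^kr)^{d/p}\phi(x_0,2^kr)$ yields $1/\phi(x_0,r)\gtrsim 2^{-kd/p}/\phi(x_0,2^kr)$, the \emph{wrong} direction for the bound $\M_p(g_2,B)/\phi(x_0,r)\lesssim\sum_k 2^{-kd/p}\M_p(f,2^{k+1}B)/\phi(x_0,2^{k+1}B)$ you assert; the actual summability over $k$ must come from \eqref{CZ-C} in the form $\sum_k\phi(x_0,2^kr)\lesssim\phi(x_0,r)$, and even with that, showing $A_{p,\phi}(Tf)=0$ from $A_{p,\phi}(f)=0$ mixes scales in a way your outline does not resolve.
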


In a similar way to 
\cite{Yamaguchi2023JMSJ,Yamaguchi-Nakai2021MJIU,Yamaguchi-Nakai-Shimomura2023AMS}
we can apply Theorem~\ref{thm:CZ2} to the dual and bidual operators of $T$.
In general, 
if a linear operator $T$ is bounded from a normed linear space $X$ to a normed linear space $Y$, 
then the dual operator $T^*$ is bounded from $Y^*$ to $X^*$, 
where $X^*$ and $Y^*$ are the dual spaces of $X$ and $Y$, respectively, 
see \cite[Theorem~2'~(p.~195)]{Yosida1980}. 
This idea was used by 
\cite{Rosenthal-Triebel2014,Rosenthal-Triebel2015,Sawano-El-Shabrawy2018} 
for Morrey spaces.

\begin{thm} \label{thm:dual op}
Let $p\in(1,\infty)$, 
and let $\phi$ be in $\cGdec_p$ and satisfy \eqref{NC}, \eqref{CZ-C}
and \eqref{phi x 1}. 
Let the kernel $K$ satisfy \eqref{SK1} and \eqref{SK2}, and let $K^t(x,y)=K(y,x)$.
Assume that $T$ and $T^t$ are Calder\'{o}n--Zygmund operators 
with kernel $K$ and $K^t$ of type $\omega\in\Omega$,
respectively.
\begin{enumerate}
\item 
The dual operator $T^*$ of $T$ coincides with $T^t$ on $B^{[\phi,p']}(\R^d)$.
Consequently, 
Calder\'{o}n--Zygmund operators of type $\omega\in\Omega$
are bounded on $B^{[\phi,p']}(\R^d)$.
\item 
The bidual operator $T^{**}$ of $T$ coincides with $T$
on $L_{p,\phi}(\R^d)$.
Consequently, 
$T$ is a bounded linear operator on $L_{p,\phi}(\R^d)$. 
\end{enumerate}
\end{thm}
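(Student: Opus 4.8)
The plan is to derive both statements from Theorem~\ref{thm:CZ2} by a pure dualization argument, as in \cite{Yamaguchi2023JMSJ,Yamaguchi-Nakai2021MJIU,Yamaguchi-Nakai-Shimomura2023AMS}. Put $X=\overline{\Cic(\R^d)}^{L_{p,\phi}(\R^d)}$. By Theorem~\ref{thm:CZ2} the operator $T$ maps $X$ boundedly into itself, so its Banach adjoint $T^{*}$ is bounded on $X^{*}$ and the bi-adjoint $T^{**}$ is bounded on $X^{**}$, with $\|T^{*}\|=\|T^{**}\|=\|T\|_{X\to X}$ (see \cite[Theorem~2'~(p.~195)]{Yosida1980}). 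By Theorem~\ref{thm:dual} we have $X^{*}=B^{[\phi,p']}(\R^d)$ and by Theorem~\ref{thm:bidual} we have $X^{**}=L_{p,\phi}(\R^d)$; hence $T^{*}$ is automatically bounded on $B^{[\phi,p']}(\R^d)$ and $T^{**}$ on $L_{p,\phi}(\R^d)$. The substance of the theorem is therefore the \emph{identification} of $T^{*}$ with $T^{t}$ and of $T^{**}$ with $T$ on these spaces, which is what the rest of the proof would establish.

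For part (i), since $L^{p'}_{\comp}(\R^d)$ is dense in $B^{[\phi,p']}(\R^d)$ and $\Cic(\R^d)$ is dense in $X$, it suffices to verify that, for $f\in L^{p'}_{\comp}(\R^d)$ and $v\in\Cic(\R^d)$,
\begin{equation*}
 \langle T^{*}f,v\rangle=\int_{\R^d}f(x)\,Tv(x)\,dx=\int_{\R^d}T^{t}f(x)\,v(x)\,dx ,
\end{equation*}
the first equality being the definition of the adjoint together with the concrete pairing of Theorem~\ref{thm:dual}. All four functions $f,v,Tv,T^{t}f$ lie in $L^{2}(\R^d)$ (boundedness of $T$, $T^{t}$ on $L^{2}$), and the second equality is the standard transpose relation for singular integrals with transposed kernels $K$, $K^{t}(x,y)=K(y,x)$: I would prove it by splitting each side into the contribution of the region where the two variables stay apart — where Fubini applies and the double integral of $K(x,y)f(x)v(y)$ converges absolutely by \eqref{SK1} — and a fixed neighbourhood of the diagonal, handled by the $L^{2}$-adjointness of $T$ and $T^{t}$. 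This gives $T^{*}=T^{t}$ on $B^{[\phi,p']}(\R^d)$. To obtain that \emph{every} Calder\'{o}n--Zygmund operator $S$ of type $\omega$ is bounded on $B^{[\phi,p']}(\R^d)$, apply this with $T=S^{t}$: the transpose $S^{t}$ is again a Calder\'{o}n--Zygmund operator of type $\omega$, since \eqref{SK1} and \eqref{SK2} are symmetric in the kernel and $S^{t}$ is bounded on $L^{2}(\R^d)$, and then $S=(S^{t})^{t}=T^{*}$ is bounded on $B^{[\phi,p']}(\R^d)$.

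For part (ii), since $L^{p'}_{\comp}(\R^d)$ is dense in $B^{[\phi,p']}(\R^d)=X^{*}$, it suffices to check that, for $g\in L_{p,\phi}(\R^d)$ and $f\in L^{p'}_{\comp}(\R^d)$,
\begin{equation*}
 \langle T^{**}g,f\rangle=\langle g,T^{*}f\rangle=\int_{\R^d}g(x)\,T^{t}f(x)\,dx=\int_{\R^d}Tg(x)\,f(x)\,dx ,
\end{equation*}
where $Tg$ is understood through the local formula of Remark~\ref{rem:T} and the second equality uses part (i). Fixing a ball $B\supset\supp f$ and decomposing $g=g\chi_{2B}+g\chi_{\R^d\setminus 2B}$, the last identity splits into an $L^{p}$--$L^{p'}$ adjointness statement for the piece $g\chi_{2B}\in L^{p}(\R^d)$ and an absolutely convergent Fubini computation for $\int_{B}\big(\int_{\R^d\setminus 2B}K(x,y)g(y)\,dy\big)f(x)\,dx$. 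Hence $T^{**}g=Tg$ in $L_{p,\phi}(\R^d)$, and in particular $T$ is bounded on $L_{p,\phi}(\R^d)$ with $\|T\|_{L_{p,\phi}\to L_{p,\phi}}=\|T^{**}\|=\|T\|_{X\to X}$, recovering Theorem~\ref{thm:CZ1}.

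The abstract dualization is immediate once Theorem~\ref{thm:CZ2} and the duality of Section~\ref{s:duality} are in hand. I expect the main obstacle to be the bookkeeping in the two identification steps: assigning a rigorous meaning to the pairings $\langle f,Tv\rangle$ and $\langle g,T^{t}f\rangle$ when $T$ and $T^{t}$ are only prescribed on $\Cic(\R^d)$ and $L^{2}(\R^d)$ and, on $L_{p,\phi}(\R^d)$, through the truncation of Remark~\ref{rem:T}; and justifying the Fubini/adjointness interchanges that produce the transpose identities, where one must combine the size bound \eqref{SK1} with the Morrey norm and the block decomposition (Proposition~\ref{prop:L 1 loc}) to control the off-diagonal integrals, and then pass from the dense subspaces $L^{p'}_{\comp}(\R^d)$, $\Cic(\R^d)$ to the whole spaces by continuity. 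One should also record that the standing hypotheses put us in the setting where Theorems~\ref{thm:dual} and \ref{thm:bidual} apply — for instance, \eqref{phi x 1} together with \eqref{CZ-C} and the almost decreasingness built into $\cGdec_p$ forces $\inf_{x}\phi(x,r)\to\infty$ as $r\to+0$, i.e.\ \eqref{lim1}.
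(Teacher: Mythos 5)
Your proposal is correct and follows essentially the same route as the paper: boundedness of $T$ on $X=\overline{\Cic(\R^d)}^{L_{p,\phi}(\R^d)}$ from Theorem~\ref{thm:CZ2}, abstract dualization via $X^{*}=B^{[\phi,p']}(\R^d)$ and $X^{**}=L_{p,\phi}(\R^d)$, identification of $T^{*}$ with $T^{t}$ on the dense subspace $L^{p'}_{\comp}(\R^d)$ by the $L^{2}$--adjoint relation, and identification of $T^{**}$ with $T$ on $L_{p,\phi}(\R^d)$ by splitting the Morrey function into its near part $\chi_{2B}$ and far part and invoking Remark~\ref{rem:T}. Your added observation that \eqref{CZ-C}, \eqref{phi x 1}, and the almost decreasingness give \eqref{lim1} is sound, but note that neither you nor the paper verifies \eqref{lim2}, which Theorems~\ref{thm:dual} and \ref{thm:bidual} also require, and in fact $\phi(x,r)=r^{-d/p}$ satisfies every hypothesis of the theorem yet violates \eqref{lim2}, so this deserves either an extra hypothesis or a separate argument for the degenerate case $L_{p,\phi}=L^{p}$.
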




\begin{rem}\label{rem:CZ-C}
Let $\phi$ satisfy the doubling condition. 
If $\phi$ satisfies \eqref{CZ-C}, 
then $\phi$ is almost decreasing, 
$\phi(x,r)\to0\;\;(r\to\infty)$ and $\phi(x,r)\to\infty\;\;(r\to+0)$, 
see \cite[Lemma~6]{Janson1976}. 
The condition \eqref{CZ-C} was used by 
\cite{Nakai1994MathNachr,Nakai2008AMS,Nakai2010RMC,Nakamura-Noi-Sawano2016}, etc. 
\end{rem}

\begin{lem}[{\cite[Lemma~7.1]{Nakai2008AMS}}]\label{lem:r^epsilon}
If $\phi$ satisfies \eqref{CZ-C}, 
then there exist $\epsilon>0$ and $C_\epsilon>0$ such that
\begin{equation*}
\int_r^\infty\frac{\phi(x,t)t^\epsilon}{t}\,dt
\le C_\epsilon\phi(x,r)r^\epsilon
\quad \quad \text{for} \quad x\in\R^d, r>0. 
\end{equation*}
\end{lem}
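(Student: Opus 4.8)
The plan is to run the standard self-improvement argument for Dini-type integral conditions via a dyadic decomposition of $(r,\infty)$. Fix $x\in\R^d$ and write $F_x(r)=\int_r^\infty\frac{\phi(x,t)}{t}\,dt$; by \eqref{CZ-C} this is finite for every $r>0$, it is positive and decreasing in $r$, and it satisfies $F_x(r)\le C\phi(x,r)$, i.e. $\phi(x,t)\ge\tfrac1C F_x(t)$ for every $t>0$. Splitting $(r,\infty)=\bigcup_{k\ge0}[2^kr,2^{k+1}r)$ and using $t^\epsilon\le(2^{k+1}r)^\epsilon$ on the $k$-th block gives
\begin{equation*}
 \int_r^\infty\frac{\phi(x,t)\,t^\epsilon}{t}\,dt
 \le\sum_{k=0}^\infty(2^{k+1}r)^\epsilon\int_{2^kr}^{2^{k+1}r}\frac{\phi(x,t)}{t}\,dt
 \le\sum_{k=0}^\infty(2^{k+1}r)^\epsilon\,F_x(2^kr),
\end{equation*}
so everything reduces to proving that the tails $F_x(2^kr)$ decay geometrically in $k$ at a rate beating $2^{k\epsilon}$ once $\epsilon$ is chosen small enough.

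The key step is the geometric decay $F_x(2^{k+1}r)\le\rho\,F_x(2^kr)$ with $\rho=\bigl(1+\tfrac{\log 2}{C}\bigr)^{-1}<1$. Indeed, for $t\in[2^kr,2^{k+1}r]$ we have $\phi(x,t)\ge\tfrac1C F_x(t)\ge\tfrac1C F_x(2^{k+1}r)$ by \eqref{CZ-C} and the monotonicity of $F_x$; integrating against $dt/t$ over $[2^kr,2^{k+1}r]$ gives
\begin{equation*}
 F_x(2^kr)-F_x(2^{k+1}r)=\int_{2^kr}^{2^{k+1}r}\frac{\phi(x,t)}{t}\,dt\ge\frac{\log 2}{C}\,F_x(2^{k+1}r),
\end{equation*}
which rearranges to the claimed inequality, so that $F_x(2^kr)\le\rho^k F_x(r)$. (Equivalently, when $\phi(x,\cdot)$ is regular enough one may integrate the differential inequality $(\log F_x)'(r)=-\phi(x,r)/(rF_x(r))\le-1/(Cr)$; the dyadic form above avoids any discussion of the absolute continuity of $F_x$.)

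To conclude, choose $\epsilon>0$ so small that $2^\epsilon\rho<1$, for instance $\epsilon=\tfrac12\log_2\!\bigl(1+\tfrac{\log 2}{C}\bigr)$. Then
\begin{equation*}
 \sum_{k=0}^\infty(2^{k+1}r)^\epsilon\,F_x(2^kr)
 \le 2^\epsilon r^\epsilon F_x(r)\sum_{k=0}^\infty(2^\epsilon\rho)^k
 =\frac{2^\epsilon}{1-2^\epsilon\rho}\,r^\epsilon F_x(r),
\end{equation*}
and one final use of \eqref{CZ-C} (namely $F_x(r)\le C\phi(x,r)$) yields $\int_r^\infty\frac{\phi(x,t)t^\epsilon}{t}\,dt\le C_\epsilon\,\phi(x,r)\,r^\epsilon$ with $C_\epsilon=\frac{2^\epsilon C}{1-2^\epsilon\rho}$, where $\epsilon$ and $C_\epsilon$ depend only on $C$ and hence are uniform in $x$ and $r$. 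The one point that needs care is that the decay of the tails must be geometric, not merely polynomial: a single-step bound $F_x(2r)\le F_x(r)$ is useless, and lower-bounding $F_x(t)\ge F_x(s)$ over a whole interval $[r,s]$ only yields $F_x(s)\ls F_x(r)/(1+\log(s/r))$; it is the block-by-block localization of the lower bound $\phi(x,t)\gs F_x(t)$ that produces the exponential rate. Everything else is summing a geometric series.
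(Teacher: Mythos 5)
Your proof is correct. The paper does not reprove this lemma—it simply cites \cite[Lemma~7.1]{Nakai2008AMS}—but your argument is the standard self-improvement scheme for such Zygmund-type integral conditions: set $F_x(r)=\int_r^\infty\phi(x,t)\,dt/t$, iterate \eqref{CZ-C} across dyadic blocks to obtain the geometric tail decay $F_x(2^k r)\le\rho^k F_x(r)$ with $\rho=(1+\log 2/C)^{-1}<1$, then pick $\epsilon$ small enough that $2^\epsilon\rho<1$ and sum the resulting geometric series, finishing with one more application of $F_x(r)\le C\phi(x,r)$. Your resulting $\epsilon$ and $C_\epsilon$ depend only on the constant $C$ in \eqref{CZ-C}, hence are uniform in $x$ and $r$ as the statement requires; the one place a careless reader might stumble—that the block-by-block lower bound $\phi(x,t)\gs F_x(t)$ is what produces genuinely geometric (not merely logarithmic) decay—you flag explicitly, which is exactly the right point to emphasize.
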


\begin{lem}[{\cite[Lemma~7.5]{Yamaguchi-Nakai-Shimomura2023AMS}}]
\label{lem:phi12}
Let $\phi$ be in $\cGdec_p$ and satisfy \eqref{NC} and \eqref{CZ-C}. 
Then there exists a positive constant $\epsilon$ such that 
both
$$
\phi_1(x,r)=\phi(x,r)r^{\epsilon}
\quad\text{and}\quad
\phi_2(x,r)=\frac{\phi(x,r)r^{\epsilon}}{(|x|+r)^{\epsilon}}
$$
are also in $\cGdec_p$ and satisfy \eqref{NC} and \eqref{CZ-C}.
\end{lem}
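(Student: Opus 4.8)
The plan is to fix $\epsilon$ to be the exponent furnished by Lemma~\ref{lem:r^epsilon} and then to verify, one condition at a time, each of the five requirements in the statement --- the two monotonicity conditions in the definition of $\cGdec_p$, the nearness condition \eqref{NC}, and the integral condition \eqref{CZ-C} --- first for $\phi_1(x,r)=\phi(x,r)r^\epsilon$ and then for $\phi_2(x,r)=\phi_1(x,r)(|x|+r)^{-\epsilon}$. Observe that Lemma~\ref{lem:r^epsilon} is precisely the statement that $\phi_1$ satisfies \eqref{CZ-C} with this $\epsilon$, so for $\phi_1$ that condition is automatic; the work is to transfer it to $\phi_2$.

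For $\phi_1$ every verification is short. The product $r^{d/p}\phi_1(x,r)=r^{d/p+\epsilon}\phi(x,r)$ is almost increasing, being the product of the almost increasing $r\mapsto r^{d/p}\phi(x,r)$ with the increasing $r\mapsto r^\epsilon$. The factor $r^\epsilon$ cancels in $\phi_1(x,r)/\phi_1(y,r)=\phi(x,r)/\phi(y,r)$, so \eqref{NC} is inherited from $\phi$. Every $\phi\in\cGdec_p$ satisfies the doubling condition \eqref{DC} (restrict the two monotonicity inequalities to the range $1/2\le r/s\le2$), hence so does $\phi_1$; combining \eqref{DC} and \eqref{CZ-C} for $\phi_1$ with the implication noted in Remark~\ref{rem:CZ-C} (doubling together with \eqref{CZ-C} forces almost decreasingness, by Janson's lemma), we obtain that $r\mapsto\phi_1(x,r)$ is almost decreasing. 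Hence $\phi_1\in\cGdec_p$ and satisfies \eqref{NC} and \eqref{CZ-C}.

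For $\phi_2$ I would lean on two elementary observations: $t\mapsto t/(|x|+t)$ is increasing, and $|x|+r\sim|y|+r$ whenever $|x-y|\le r$. The second fact, combined with \eqref{NC} for $\phi_1$, gives \eqref{NC} for $\phi_2$. The identity $r^{d/p}\phi_2(x,r)=\bigl(r^{d/p}\phi(x,r)\bigr)\bigl(r/(|x|+r)\bigr)^\epsilon$ presents $r^{d/p}\phi_2(x,\cdot)$ as a product of an almost increasing and an increasing function, hence almost increasing; the identity $\phi_2(x,r)=\phi_1(x,r)(|x|+r)^{-\epsilon}$ presents $\phi_2(x,\cdot)$ as a product of an almost decreasing and a decreasing function, hence almost decreasing.

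The one step that requires genuine computation, and the point I expect to be the main obstacle, is \eqref{CZ-C} for $\phi_2$; I would handle it by splitting according to whether $r\ge|x|$ or $r<|x|$. When $r\ge|x|$ one has $(|x|+t)^\epsilon\ge t^\epsilon$ for all $t\ge r$, so $\int_r^\infty\phi_2(x,t)\,dt/t\le\int_r^\infty\phi(x,t)\,dt/t\le C\phi(x,r)$ by \eqref{CZ-C} for $\phi$, and $\phi(x,r)\sim\phi_2(x,r)$ since $|x|+r\sim r$. When $r<|x|$ I would break the integral at $|x|$: on $(|x|,\infty)$ the previous case applies with $|x|$ in place of $r$, and $\phi_2(x,|x|)\ls\phi_2(x,r)$ by the almost decreasingness just established; on $(r,|x|)$ one uses $(|x|+t)^\epsilon\ge|x|^\epsilon$ to bound that piece by $|x|^{-\epsilon}\int_r^\infty\phi_1(x,t)\,dt/t\le C|x|^{-\epsilon}\phi_1(x,r)$ via \eqref{CZ-C} for $\phi_1$, and finally $|x|^{-\epsilon}\phi_1(x,r)\sim(|x|+r)^{-\epsilon}\phi_1(x,r)=\phi_2(x,r)$ because $|x|+r\sim|x|$. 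Assembling these estimates gives \eqref{CZ-C} for $\phi_2$ and finishes the lemma.
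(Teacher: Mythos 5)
Your proof is correct, and the approach is the natural one: choose $\epsilon$ from Lemma~\ref{lem:r^epsilon}, which is by construction the statement \eqref{CZ-C} for $\phi_1$; use Janson's lemma (Remark~\ref{rem:CZ-C}) to deduce almost decreasingness of $\phi_1$ from its doubling and \eqref{CZ-C} (this is the one non-obvious step, since $\phi_1=\phi\cdot r^\epsilon$ multiplies an almost decreasing function by an increasing one, so no direct argument works); then transfer everything to $\phi_2$ using the elementary observations that $t\mapsto t/(|x|+t)$ is increasing and $|x|+r\sim|y|+r$ when $|x-y|\le r$, with the $r\gtrless|x|$ case split for \eqref{CZ-C}. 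One small point to spell out: when you write ``hence so does $\phi_1$'' (referring to \eqref{DC}), the cleanest justification is that $\phi_1(x,r)/\phi_1(x,s)=\bigl(\phi(x,r)/\phi(x,s)\bigr)(r/s)^\epsilon$, and both factors are bounded above and below when $1/2\le r/s\le 2$; as written the reader might worry you are invoking $\phi_1\in\cGdec_p$ before proving it. The present paper does not reproduce the proof (it cites \cite[Lemma~7.5]{Yamaguchi-Nakai-Shimomura2023AMS}), but your argument matches what one would expect that proof to be.
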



\begin{proof}[\sc Proof of Theorem~\ref{thm:CZ2}]
By Lemma~\ref{lem:phi12} 
there exists a positive constant $\epsilon$ such that 
both
$\phi_1(x,r)=\phi(x,r)r^{\epsilon}$
and $\phi_2(x,r)={\phi(x,r)r^{\epsilon}}/{(|x|+r)^{\epsilon}}$
are in $\cGdec_p$ and satisfy \eqref{NC} and \eqref{CZ-C}.
From Proposition~\ref{prop:Cic-gM} 
it follows that $\Cic(\R^d)\subset L_{p,\phi_i}(\R^d)$ $(i=1,2)$.
Moreover,
$\phi_i \; (i=1,2)$ satisfy the condition \eqref{DC}. 

Now, let $f\in\Cic(\R^d)$. 
Then $Tf\in L^p(\R^d)$ by the boundedness of $T$ on $L^p(\R^d)$.
Hence, for any $\delta>0$, 
there exists $g\in\Cic(\R^d)$ such that $\|Tf-g\|_{L^p}<\delta$.
Since $f\in L_{p,\phi_i}(\R^d)$,
by Theorem~\ref{thm:CZ1} we have $Tf\in L_{p,\phi_i}(\R^d)$ $(i=1,2)$. 
Since $g\in L_{p,\phi_i}(\R^d)$ $(i=1,2)$ also, we get 
\begin{align*}
 \frac{\M_p(Tf-g,B(x,r))}{\phi(x,r)}
 &\le
 \frac{\phi_1(x,r)}{\phi(x,r)}\|Tf-g\|_{L_{p,\phi_1}} \\
 &=
 r^\epsilon\|Tf-g\|_{L_{p,\phi_1}} \to 0 
 \quad \text{as} \quad r\to+0. 
\end{align*}
Similarly, 
\begin{align*}
 \frac{\M_p(Tf-g,B(x,r))}{\phi(x,r)}
 &\le
 \frac{\phi_2(x,r)}{\phi(x,r)}\|Tf-g\|_{L_{p,\phi_2}} \\
 &=
 \frac{r^{\epsilon}}{(|x|+r)^{\epsilon}}\|Tf-g\|_{L_{p,\phi_2}} \to 0
 \quad \text{as} \quad |x| \to \infty. 
\end{align*}
Finally, using the almost increasingness of $r\mapsto r^{d/p}\phi(x,r)$ and \eqref{phi x 1},
we have that, if $r>1$, then
\begin{align*}
 \frac{\M_p(Tf-g,B(x,r))}{\phi(x,r)}
 &\le
 \frac{\|Tf-g\|_{L^p}}{|B(x,r)|^{1/p}\phi(x,r)} \\
 &\le
 \frac{\delta}{|B(0,1)|^{1/p}r^{d/p}\phi(x,r)}
 \le
 \frac{C\delta}{\phi(x,1)}
 \le
 \frac{C\delta}{\inf_{x\in\R^d}\phi(x,1)},
\end{align*}
which shows
\begin{equation}
 \limsup_{r\to\infty}\sup_{x\in\R^d}
 \frac{\M_p(Tf-g,B(x,r))}{\phi(x,r)}
 \le C\delta,
\end{equation}
where $C$ is the constant dependent only on $d$, $p$ and $\phi$.

Thus, by Theorem~\ref{thm:distance} 
and Remark~\ref{rem:distance}  we have 
\begin{equation*}
 \inf_{h\in\Cic(\R^d)}\|Tf-g-h\|_{L_{p,\phi}}
 =
 \inf_{h\in\Cic(\R^d)}\|Tf-h\|_{L_{p,\phi}}
 \le
 C\delta. 
\end{equation*}
That is, 
$Tf\in\overline{\Cic(\R^d)}^{L_{p,\phi}(\R^d)}$. 
We get the conclusion.
\end{proof}

\begin{proof}[\sc Proof of Theorem~\ref{thm:dual op}]
First, we prove (i).
We note that the kernel $K^t$ also satisfies \eqref{SK1} and \eqref{SK2}.
Let $f\in\Cic(\R^d)\subset L_{p,\phi}(\R^d)$ 
and $g\in L_\comp^{p'}(\R^d) \subset B^{[\phi,p']}(\R^d)$. 
Then 
\begin{equation*}
 \langle f,T^*(g) \rangle
 =
 \langle T(f),g \rangle
 =
 \int_{\R^d}T(f)(x)g(x)\,dx 
 =
 \int_{\R^d}f(x)T^t(g)(x)\,dx, 
\end{equation*}
that is, $T^*=T^t$ on $L_\comp^{p'}(\R^d)$,
which shows $T^*=T^t$ on $B^{[\phi,p']}(\R^d)$, 
since $L_\comp^{p'}(\R^d)$ is dense in $B^{[\phi,p']}(\R^d)$.

Next, we prove (ii). 
Let $g\in L_\comp^{p'}(\R^d)$, $\supp g \subset B$ and $f\in L_{p,\phi}(\R^d)$. 
We write $f=f_1+f_2$, where $f_1=f\chi_{2B}$, $f_2=f(1-\chi_{2B})$. 
Then 
\begin{align*}
 \langle g, T^{**}(f_1) \rangle
 &=
 \langle T^*(g), f_1 \rangle
 =
 \langle T^t(g), f_1 \rangle\\
 &=
 \int_{\R^d} T^t(g)(x)f_1(x)\,dx\\
 &=
 \int_{\R^d} g(x)T(f_1)(x)\,dx, 
\end{align*}
and 
\begin{align*}
 \langle g, T^{**}(f_2) \rangle
 &=
 \langle T^*(g), f_2 \rangle
 =
 \langle T^t(g), f_2 \rangle\\
 &=
 \int_{(2B)^c}\left(\int_B K^t(x,y)g(y)\,dy\right)f(x)\,dx\\
 &=
 \int_Bg(y)\left(\int_{(2B)^c}K^t(x,y)f(x)\,dx\right)\,dy. 
\end{align*}
Hence, $T^{**}=T$ on $L_{p,\phi}(\R^d)$ by Remark~\ref{rem:T}. 
\end{proof}


\end{document}